\documentclass[12pt,a4paper]{amsart}
\usepackage{geometry}
\usepackage[utf8]{inputenc}
\usepackage[english]{babel}
\usepackage{amsfonts,amssymb,amsmath,amsthm,mathtools,bm}
\usepackage{xcolor}
\usepackage{enumitem}
\usepackage{booktabs}
\usepackage{cite}
\usepackage{url}
\usepackage{footmisc}
\usepackage[hidelinks,hyperfootnotes=false]{hyperref}
\hypersetup{
  pdftitle={The derivative function of the zeta-star correspondence},
  pdfkeywords={multiple zeta-star values, right derivative,
    nowhere differentiability, Hausdorff dimension, packing dimension,
    multifractal spectrum}
}

\allowdisplaybreaks[2]
\numberwithin{equation}{section}
\theoremstyle{plain}
\newtheorem{theorem}{Theorem}[section]
\newtheorem{corollary}[theorem]{Corollary}

\newtheorem{lemma}[theorem]{Lemma}
\newtheorem{proposition}[theorem]{Proposition}
\newtheorem{remark}[theorem]{Remark}

\newtheorem{conjecture}[theorem]{Conjecture}

\makeatletter
\renewenvironment{proof}[1][\proofname]{\par
  \pushQED{\qed}%
  \normalfont \topsep6\p@\@plus6\p@\relax
  \trivlist
  \item[\hskip\labelsep
        \normalfont\bfseries\color{black}#1\@addpunct{:}]\ignorespaces
}{%
  \popQED\endtrivlist\@endpefalse
}
\makeatother

\newcommand{\Rp}{\partial_{+}}
\newcommand{\dimH}{\dim_{\mathrm H}}
\newcommand{\dimP}{\dim_{\mathrm P}}

\newcommand{\ldimB}{\underline{\dim}_{\mathrm B}}
\newcommand{\udimB}{\overline{\dim}_{\mathrm B}}
\newcommand{\udimMB}{\overline{\dim}_{\mathrm{MB}}}

\newcommand{\Btwo}{B_{\mathbb R^2}}
\newcommand{\eps}{\varepsilon}

\newcommand{\N}{\mathbb N}
\newcommand{\Z}{\mathbb Z}

\newcommand{\SigmaTwo}{\{0,1\}^{\N}}

\newcommand{\R}{\mathbb R}

\newcommand{\CC}{\mathcal C}
\newcommand{\HH}{\mathcal H}
\newcommand{\LL}{\mathcal L^1}

\newtheorem*{mtp}{Mass transference principle}
\title[Derivative of the zeta-star correspondence]{The derivative function of the zeta-star correspondence}

\author{Jiangtao Li}

\email{lijiangtao@csu.edu.cn}

\address{Jiangtao Li \\ School of Mathematics and Statistics, HNP-LAMA, Central South University, Hunan Province, China}

\DeclareMathOperator{\wt}{wt}
\DeclareMathOperator{\dep}{dep}

\begin{document}

\begin{abstract}
We study the right derivative of the zeta-star correspondence between
binary expansions and multiple zeta-star values of infinite depth.  At a
finite-index point, this derivative is a normalized difference of two
finite-depth multiple zeta-star values.  We use this identity to express its
local behavior in terms of lower-truncated star sums and their boundary
coefficients.  The homogeneous contribution at summation variable three
contracts with ratio $2/3$; the full coefficient also receives a positive
forcing term.  A weighted sum of the remaining coefficients contracts
by at most $1/2$ at each binary step.

On the open parameter interval, we prove that the derivative is
right-continuous everywhere and continuous precisely at the non-dyadic points.  Its left jump at a dyadic point is given
by an explicit positive multiple series, with divergence exactly at
$1/2-2^{-M}$, $M\ge2$.  A refinement of the index gives an infinite upper
right Dini derivative at every interior point.  We also prove that the
Hausdorff, packing, and modified upper box dimensions of the graph are all
$\log_2(8/3)$.  The Hausdorff lower bound uses the dimension of the Bernoulli
convolution with parameter $2/3$, together with a separate measure-transfer
argument.  Finally, the pointwise H\"older exponent at a non-dyadic point is
$\log_2(3/2)$ divided by its dyadic approximation exponent, and the
corresponding Hausdorff spectrum is linear.  
\end{abstract}

\maketitle

\begingroup
\let\thefootnote\relax
\footnotetext{%
2020 \textit{Mathematics Subject Classification}. Primary 11M32;
Secondary 26A16, 26A27, 28A80.\\
\textit{Keywords:} Multiple zeta-star values, 
 Hausdorff dimension, Bernoulli convolutions,
multifractal spectrum.
}
\endgroup

\section{Introduction}\label{sec:main}

Multiple zeta-star values can be studied not only as individual special
values, but also as an ordered family whose finite depth elements
approximate a continuum of infinite depth limits.  Zeta star
correspondence makes this second viewpoint precise by associating binary
coordinates with those limits.  Its derivative therefore measures how
arithmetic values change under refinement of their indices.  The purpose
of this paper is to determine the fine regularity of that derivative:
its one-sided limits, the failure of further differentiability, the
dimensions of its graph, and the distribution of its pointwise H\"older
exponents.  These questions connect the arithmetic of multiple harmonic
sums with quantitative approximation and fractal geometry.

\subsection{Multiple zeta-star values and the infinite depth viewpoint}

We use the descending summation convention.  An index
$\mathbf k=(k_1,\ldots,k_r)$ is admissible if $k_1\ge2$ and
$k_2,\ldots,k_r\ge1$.  Its weight and depth are
$\wt(\mathbf k)=k_1+\cdots+k_r$ and $\dep(\mathbf k)=r$.  The multiple zeta
value and multiple zeta-star value attached to $\mathbf k$ are
\begin{equation}\label{eq:mzv-mzsv-intro}
 \zeta(\mathbf k)
 =\sum_{n_1>\cdots>n_r\ge1}
 \frac1{n_1^{k_1}\cdots n_r^{k_r}},
 \qquad
 \zeta^\star(\mathbf k)
 =\sum_{n_1\ge\cdots\ge n_r\ge1}
 \frac1{n_1^{k_1}\cdots n_r^{k_r}}.
\end{equation}
We set $\wt(\varnothing)=\dep(\varnothing)=0$ and
$\zeta(\varnothing)=\zeta^\star(\varnothing)=1$, and write
$\mathbf k^{[j]}=(k_1,\ldots,k_j)$, with
$\mathbf k^{[0]}=\varnothing$.  As usual, $\{a\}^q$ denotes $q$
consecutive copies of $a$.

The algebraic theory of these series provides identities among special
values and organizes them by weight and depth
\cite{Hoffman1992,Hoffman1997,Zagier1994}.  Their relation to multiple
polylogarithms brings together explicit evaluations and applications in
combinatorics and mathematical physics \cite{BBBL2001}, while Brown's
work \cite{Brown2012} on mixed Tate motives establishes that every multiple zeta value is
a rational linear combination of values with entries in $\{2,3\}$.  The star and strict versions are connected by the
familiar sum over coarsenings of an index: each comma may be retained
or replaced by addition of the adjacent entries.  Thus the star notation
does not introduce a different algebra of finite-depth constants. 

A complementary line of research concerns order and limiting depth.
The author constructed an order compatible correspondence between infinite
admissible indices and the half-line $(1,+\infty)$, and studied its
topological and metric properties \cite{LiTopology2023}.
Hirose, Murahara and Onozuka developed infinite-length multiple zeta-star
values, obtained explicit evaluations, and established one-sided
derivative formulas for the associated binary parametrization
\cite{HMO2025}.  In this setting, increasing depth is not merely a device
for producing additional identities: it provides a way to approximate
real numbers by multiple zeta-star values.

We use the notation of \cite{LiTopology2023}. To describe the normalization used here, let
\begin{equation}\label{eq:T-intro}
 \mathcal T=
 \bigl\{(k_1,k_2,\ldots):k_1\ge2,\ k_j\ge1\ (j\ge2)\bigr\}
 \setminus\bigl\{(2,\{1\}^{\infty})\bigr\}.
\end{equation}
We order $\mathcal T$ by reverse lexicographic order:
if $j$ is the first position at which $\mathbf k$ and
$\mathbf l$ differ, then $\mathbf k\succ\mathbf l$
if and only if $k_j<l_j$.
For $\mathbf k\in\mathcal T$, put
\begin{equation}\label{eq:beta-infinite-intro}
 K_r=k_1+\cdots+k_r,
 \qquad
 \beta(\mathbf k)=\sum_{r\ge1}2^{-K_r}.
\end{equation}
The positions of the ones in the binary expansion are exactly the
partial weights $K_r$.  
The author \cite{LiTopology2023} proved that there is a bijective order preserving map 
\[
\eta: \mathcal{T}\rightarrow (1,+\infty),\]
\[
\eta(k_1,k_2,\cdots)=\lim_{r\to+\infty}\zeta^\star(k_1,\cdots,k_r).\]
We call $\eta$ the  zeta-star correspondence.  Hirose, Murahara and Onozuka \cite{HMO2025} found the bijective map $\eta$ independently.
One can identify $\mathcal{T}$ with $(0,1/2)$ by the map $\beta$.
By considering the inverse map of  $\tau=\beta\circ \eta^{-1}$ in Section $4$, \cite{LiTopology2023}, the preceding order results yield a continuous
increasing correspondence
\begin{equation}\label{eq:Phi-intro}
 \Phi:[0,1/2]\rightarrow[1,+\infty],
 \qquad
 \Phi\bigl(\beta(\mathbf k)\bigr)
 =\lim_{r\to+\infty}\zeta^\star(k_1,\cdots,k_r),
\end{equation}
where the target has its order topology,
$\Phi(0)=1$, and $\Phi(1/2)=+\infty$.  For a finite admissible index define
\begin{equation}\label{eq:finite-beta}
 \beta^f(\mathbf k)=\sum_{j=1}^r2^{-K_j}.
\end{equation}
The infinite-one-tail identity of  Lemma $3.4$ $(ii)$ in  \cite{LiTopology2023} gives
\begin{equation}\label{eq:finite-phi}
 \Phi\bigl(\beta^f(\mathbf k)\bigr)=\zeta^\star(\mathbf k).
\end{equation}
Hence the dyadic coordinates in $(0,1/2)$ are precisely the finite index
coordinates.  We recall the identity behind \eqref{eq:finite-phi} in
Section~\ref{sec:truncated}.

This viewpoint has already led to several related developments.
Rational deformations connect the order structure with bounded variation
and Cantor-type sets \cite{LiRational2025}.  Kamano \cite{Kamano2026} investigated the order structure of multiple polylogarithms.
Metric approximation by
multiple zeta-star values and its normalized approximation function are
studied in \cite{Li2025,Li2026}.  Restricted-index arithmetic sums and
products are treated in \cite{LiYang2026}, and complex variations obtained
from finite harmonic-star sums in \cite{LiDivided2026}.

A two-dimensional generalization is the \emph{polylogarithms star
correspondence} introduced by the author \cite{LiPolylogarithms2026}.
For a finite index $\mathbf k=(k_1,\ldots,k_r)$ with positive entries, set
\[
 \widetilde{\operatorname{Li}}^\star_{\mathbf k}(z)
 =\sum_{n_1\ge\cdots\ge n_r\ge1}
   \frac{z^{n_1-1}}{n_1^{k_1}\cdots n_r^{k_r}},
 \qquad |z|<1.
\]
This is the shifted multiple star polylogarithm; boundary values are used
on the unit circle. For $|z|=1$, the infinite depth map
\[
 \eta_S(z,\mathbf k)
 =\lim_{r\to\infty}
   \widetilde{\operatorname{Li}}^\star_{\mathbf k^{[r]}}(z)
\]
is defined for every infinite positive integer index when $z\ne1$, and
for $\mathbf k\in\mathcal T$ when $z=1$.

The author \cite{ LiPolylogarithms2026} proved that $\eta_S$ is a bijection onto the punctured closed
half-plane \[\{w\in\mathbb C:\operatorname{Re}w\ge1/2\}\setminus\{1\}.\]
Extending $\beta$ by the same partial-weight formula to all infinite
positive-integer indices, the coordinate $z\beta(\mathbf k)$ makes this
bijection a homeomorphism. At $z=1$ it recovers
$\eta_S(1,\mathbf k)=\Phi(\beta(\mathbf k))$, so the real correspondence
considered here is a slice of this two-dimensional construction.

These works motivate a finer question about the original real zeta star
correspondence. How stable is its local arithmetic scaling as the binary
coordinate varies?

\subsection{The derivative as an arithmetic regularity problem}

We consider
\begin{equation}\label{eq:D-intro}
 D(y)=\partial_+\Phi(y),\qquad 0<y<1/2.
\end{equation}
The derivative formulas of Hirose--Murahara--Onozuka \cite{HMO2025}, expressed in the
normalization form of Proposition $2.1$ in \cite{Li2026}, give
\begin{equation}\label{eq:finite-D-intro}
 D\bigl(\beta^f(\mathbf k)\bigr)
 =2^{\wt(\mathbf k)}
 \bigl(\zeta^\star(\mathbf k)
       -\zeta^\star(\mathbf k^{[r-1]})\bigr)=\sum_{n_1\geq \cdots \geq n_r\geq 2} \left(\frac{2}{n_1}   \right)^{k_1}\cdots \left(\frac{2}{n_r} \right)^{k_r}
\end{equation}
for \[{\bf k}=(k_1,\cdots,k_r),\quad {\bf k}^{[r-1]}=(k_1,\cdots,k_{r-1}).\]
At a non-dyadic point $y=\beta(\mathbf k)$, these finite index expressions
converge to $D(y)=\Phi'(y)$.  In particular, $D$ is not an auxiliary
function imposed on the arithmetic construction: its finite coordinate
values are normalized differences of ordinary finite-depth star values.
Equivalently, they are normalized star sums with every summation
variable at least two.

The approximation meaning of $D$ is already visible from
differentiability of $\Phi$.  At a non-dyadic $y=\beta(\mathbf k)$,
\[
 \lim_{r\to\infty}
 \frac{\Phi(y)-\zeta^\star(\mathbf k^{[r]})}
      {y-\beta^f(\mathbf k^{[r]})}=D(y).
\]
This is a direct consequence of \eqref{eq:finite-phi} and the known
derivative formula.  It
identifies $D(y)$ as the asymptotic conversion factor from binary
truncation error to error in the corresponding star value.  Understanding
its continuity and oscillation is therefore relevant to the stability
of that conversion, not only to the qualitative differentiability of
$\Phi$.

The existence of a first derivative leaves its own regularity
undetermined.  In particular, two distinctions are essential here.
First, the dyadic difference
$\partial_-\Phi(y)-\partial_+\Phi(y)$ is already present, with
$Z^\star(x)=\Phi(x/2)$, in Theorem $4.10$ and Section $5$, \cite{HMO2025}.  Our one sided limit result
proves the existence of the latter limit directly and identifies its
value, including the divergent cases.  Second, continuity of $D$ at a
point does not imply differentiability there.  We show that large
positive right secants occur at every interior point, even at points
where $D$ is continuous.

The broader context is the study of functions whose digit expansions or
series representations produce regularity at one scale and oscillation
at a finer scale.  The Takagi function is a classical binary model for
this phenomenon \cite{AllaartKawamura2012}.  The present function differs
from that continuous model: it has a dense set of dyadic
discontinuities, and its coefficients arise from multiple zeta star
tails rather than a prescribed tent map series.  The issue is thus to
extract quantitative regularity from the arithmetic recursion itself.
The four results below describe the resulting local and global geometry
of $D$.

\subsection{Main results}

We always use the eventually-zero binary expansion at a dyadic point.
Let
\begin{equation}\label{eq:dyadic-set}
 \mathbb D=\{m2^{-n}:m\in\mathbb Z,\ n\in\mathbb Z_{\ge0}\},
 \qquad
 \mathcal C=(0,1/2)\setminus\mathbb D.
\end{equation}
For a word $w=w_1\cdots w_M$ with $w_j\in\{0,1\}$, put
\begin{equation}\label{eq:xw-def}
 x_w=\sum_{j=1}^Mw_j2^{-j},
 \qquad
 I_w=[x_w,x_w+2^{-M})\cap(0,1/2).
\end{equation}
An admissible word starts with zero and ends with one.  We call such a
word \emph{regular} if it contains at least five zeros. 
The first result identifies exactly where the derivative is
continuous and how its finite index values fit into the surrounding
infinite depth family.

\begin{theorem}\label{thm:continuity}
The function $D$ is finite and right-continuous on $(0,1/2)$, and
\begin{equation}\label{eq:continuity-set-main}
 \operatorname{Cont}(D)=\mathcal C.
\end{equation}
Here $\operatorname{Cont}(D)$ is the set of points where $D$ is continuous.
If $y=\beta^f(\mathbf k)$, where $\mathbf k=(k_1,\ldots,k_r)$ has weight
$M$, then the extended left limit exists and
\begin{equation}\label{eq:jump-main}
 \begin{split}
 J(\mathbf k)&:=D(y^-)-D(y)\\
 &=2^M\sum_{n_1\ge\cdots\ge n_r\ge3}
       \frac{n_r-2}{n_1^{k_1}\cdots n_r^{k_r}}
 \ \ge\ (2/3)^M>0.
 \end{split}
\end{equation}
The jump is infinite precisely when
$\mathbf k=(2,\{1\}^{r-1})$, equivalently when
$y=1/2-2^{-M}$ with $M\ge2$.  In particular, for
\begin{equation}\label{eq:dyadic-left-sequence-main}
 y_q=y-2^{-(M+q)},\qquad q\ge1,
\end{equation}
one has $D(y_q)\to D(y^-)$ in the extended real line. \end{theorem}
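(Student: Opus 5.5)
Our starting point is the finite-index formula \eqref{eq:finite-D-intro}. At a non-dyadic point we use $D(\beta(\mathbf k))=\lim_r D(\beta^f(\mathbf k^{[r]}))$. This gives
\[
 D(y)=\sum_{n_1\ge n_2\ge\cdots\ge2}\prod_j (2/n_j)^{k_j},
\]
read as the monotone limit of the depth-$r$ sums. The limit is finite: the depth-$r$ sums with variables $\ge2$ are controlled by $\Phi$, since $2^{\wt}\zeta^\star$-tails converge as in \cite{LiTopology2023}. We write $S_{\ge m}(\mathbf k)=2^{\wt(\mathbf k)}\sum_{n_1\ge\cdots\ge n_r\ge m}\prod n_j^{-k_j}$ for the lower-truncated star sums, extended to infinite indices by taking the limit in depth.

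\emph{Plan for right-continuity.} Take $z\downarrow y$. The binary expansion of $z$ eventually agrees with the eventually-zero expansion of $y$ on a long prefix. In index language, $z$ corresponds to an index that shares $\mathbf k^{[r]}$ with $y$, or (when $y$ is dyadic) that equals $\mathbf k$ followed by a long block that pushes the next partial weight $K_{r+1}$ to infinity. We split the defining sum according to the first depth $s$ at which the indices differ. The difference is bounded by a tail in which some variable carries a factor $(2/n)^{K}$ with $K$ large, or in which the variable $n_s$ is forced to be $\ge 2$ under a huge exponent. Such a tail is uniformly small because $n_j\ge2$ and, for all but the terms with $n=2$, we have $(2/n)^K\le(2/3)^K$.

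\emph{Plan for the left limit at a dyadic point.} The key difference is between terms with $n=2$ and terms with $n\ge3$. Points $z\uparrow y$ have expansions ending in long strings of ones, i.e.\ indices $(\mathbf k',\{1\}^q,\dots)$ with $\mathbf k'=(k_1,\dots,k_{r-1},k_r+1)$ and $q\to\infty$. The $1$-entries contribute factors $2/n_j$ with $n_j$ ranging down to $2$, and summing $\prod_{j}(2/n_j)$ over $q$ ones between $n_r$ and $2$ yields, in the limit $q\to\infty$, a factor replacing $(2/n_r)^{k_r}$ by $2\,(2/n_r)^{k_r+1}\cdot(\text{geometric-type sum})$. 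The precise identity to establish is
\[
 \sum_{n_r\ge m_1\ge\cdots\ge m_q\ge2}\prod_{i}\frac{2}{m_i}\ \longrightarrow\ \text{(polynomial in } n_r\text{)}.
\]
Concretely, the $q$-fold one-tail star sum starting at $n$ with all variables $\ge2$ should tend to $n(n-1)/2$ by the infinite-one-tail identity (Lemma 3.4(ii) of \cite{LiTopology2023}) in normalized form. Inserting this, subtracting $D(y)$, and simplifying $2^{M}n_r^{-(k_r+1)}\cdot n_r(n_r-1)\cdot(\cdots)-$ the right value should produce exactly the weight $(n_r-2)$ and kill the terms with $n_r=2$, giving \eqref{eq:jump-main}. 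The lower bound $(2/3)^M$ is the single term $n_1=\cdots=n_r=3$.

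Divergence occurs exactly when $2^M\sum_{n\ge3}(n-2)n^{-k_1-\cdots}$ and the nested sums diverge. Since $n_1$ carries exponent $k_1$ and the rest carry total exponent $K_r-k_1$, the sum in the innermost variable behaves like $\sum n^{1-(\text{total weight})}\cdot(\log)^{\cdots}$ when all $n_j$ are comparable. A direct count shows divergence iff $M=2+(r-1)$ with $k_1=2$ and all other $k_j=1$, i.e.\ $y=1/2-2^{-M}$. For the statement about $y_q$ we take $z=y_q$ in the above computation; monotone convergence in $q$ handles the infinite case. Continuity at non-dyadic points follows by the same tail estimates from both sides, and discontinuity at dyadics from $J>0$. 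This yields $\operatorname{Cont}(D)=\mathcal C$.

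The main obstacle is the left-limit step: justifying the interchange of the $q\to\infty$ limit with the infinite-depth tail beyond the ones-block, and the exact evaluation of the one-tail sum producing the factor $(n_r-2)$. For this I would first verify the identity at depth $r=1$ and then induct on $r$, with dominated convergence controlled by the $(2/3)^K$ decay of the tail.
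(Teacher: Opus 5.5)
Your evaluation of the left limit along $y_q$ is correct and is essentially the paper's argument: the alternative expansion $p0\,1^\infty$ together with $\prod_{n=3}^m(1-2/n)^{-1}=\binom m2$. The genuine gap is continuity at non-dyadic points, or more precisely the principle you rely on for it. You assume that once $z$ and $y$ share a long prefix, the remaining contribution is uniformly small because $(2/n)^K\le(2/3)^K$ for $n\ge3$. A termwise bound does not control a sum over infinitely many tuples of unbounded depth. In fact the conclusion is false in that generality. For $v=01^j$ the cylinder $I_v=[1/2-2^{-(j+1)},1/2)$ contains $1/2-2^{-(j+2)}$, where the left limit is $+\infty$. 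So $D$ is unbounded among points sharing an arbitrarily long prefix, and your reasoning would apply verbatim there. Along runs of ones the boundary coefficients grow rather than decay (compare $D(1/2-2^{-M})\sim 2^M$). Decay comes only from zero digits, and only after a prefix with enough zeros. Your right-continuity at a dyadic $y=\beta^f(\mathbf k)$ does survive: there a single factor $(2/m)^{t+1}$ with $t\to\infty$ multiplies everything, and the all-ones bound gives $0\le D(z)-D(y)\le A(\mathbf k)\sum_{m\ge3}\binom m2(2/m)^{t+1}\to0$. At a non-dyadic point, however, no single variable carries a growing exponent, so there are no ``same tail estimates from both sides''.

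What is missing is a weighted quantity that is finite after a suitable initial prefix and is contracted by the binary updates. The paper fixes a prefix with at least five zeros, so that $B(a)=\sum_{m\ge4}\binom m4a_m<\infty$. It then uses $\sum_{m=4}^n\frac2m\binom m4=\frac12\binom n4$ to get $B(T_ba)\le\frac12B(a)$, and handles $a_3$ through its forced recursion. This yields the cylinder oscillation bound $(3a_3+18B(a))\lambda^N$. For continuity alone a lighter version suffices. Put $G(a)=\sum_{m\ge3}\binom m2a_m$. Then $G(T_1a)\le G(a)$ and $G(T_0a)\le\frac23G(a)$, and $G(a(w))<\infty$ once $w$ has three zeros. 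Moreover $d(v)\le D(z)\le d(v)+G(a(v))$ on the cylinder of $v$. Since a non-dyadic expansion has infinitely many zeros, $G(a(v))\to0$.

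There are also three smaller points. First, your divergence heuristic $\sum n^{1-M}(\log n)^c$ would give convergence for every $M\ge3$, contradicting the divergence at $\mathbf k=(2,1)$. The count must keep the volume of comparable tuples. Its rigorous form is $\alpha_m(\mathbf k)\asymp m^{-e(\mathbf k)-1}$ with $e=\wt-\dep$, proved by induction from $\alpha_m(\mathbf k,k)=(2/m)^k\sum_{n\ge m}\alpha_n(\mathbf k)$. Second, for general $z\uparrow y$ no dominated convergence is needed. If the canonical expansion of $z$ begins with $p0\,1^q0$, then $D(y_q)\le D(z)\le D(y)+J(\mathbf k)$ by nonnegativity and the all-ones bound, since splitting an entry into ones only increases a star sum. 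Third, finiteness of $D$ at non-dyadic points is not ``controlled by $\Phi$'', because the factor $2^{K_r}$ defeats that. You must import it from the derivative theorem, or deduce it from the all-ones bound after a prefix with three zeros.
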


The positive series in \eqref{eq:jump-main} makes the one sided
instability arithmetically explicit.  In particular, a finite value of
$D$ may coexist with an infinite left limit; right continuity must not
be confused with local boundedness across such a point.  The next result
shows that irregularity is not confined to these exceptional limits or
to the dyadic discontinuities.

\begin{theorem}\label{thm:nowhere}
For every $y\in(0,1/2)$,
\begin{equation}\label{eq:dini-main}
 \limsup_{\substack{h\rightarrow0^+\\ y+h<1/2}}
 \frac{D(y+h)-D(y)}h=+\infty.
\end{equation}
Consequently, $D$ has no finite right derivative and no finite ordinary
derivative at any interior point.
\end{theorem}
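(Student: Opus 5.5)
Our plan is to exhibit, for each interior $y$, a sequence $h_j\downarrow0$ with $(D(y+h_j)-D(y))/h_j\to+\infty$. The mechanism is the coefficient structure of $D$ at finite indices. By \eqref{eq:finite-D-intro}, $D(\beta^f(\mathbf k))$ is a star sum with every variable at least two, and its dominant contribution comes from the boundary value $n_r=2$. Appending a long block of ones (or raising the last entry) changes this sum by an amount that decays only like $(2/3)^{\ell}$ in the number $\ell$ of added binary digits, while the coordinate moves by only $2^{-\ell}$. Since $(2/3)^\ell/2^{-\ell}=(4/3)^\ell\to\infty$, we expect unbounded difference quotients. Note also that Theorem~\ref{thm:continuity} gives a positive jump $J(\mathbf k)\ge(2/3)^M$ at dyadic points, which already suggests this scaling.

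\textbf{Step 1: dyadic $y$.} Let $y=\beta^f(\mathbf k)$ with weight $M$. Take $h=2^{-(M+q)}$, so that $y+h=\beta^f(k_1,\dots,k_r,q)$. By \eqref{eq:finite-D-intro},
\[
D(y+h)-D(y)=\sum_{n_1\ge\cdots\ge n_r\ge n_{r+1}\ge2}\Bigl(\prod_{i\le r}(2/n_i)^{k_i}\Bigr)\bigl((2/n_{r+1})^{q}-[\,n_{r+1}=n_r\text{-independent correction}]\bigr).
\]
More simply, writing $D(y+h)$ as a sum over the new variable $n_{r+1}$, the term $n_{r+1}=2$ reproduces exactly the part of $D(y)$ with $n_r\ge2$. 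The remaining terms with $n_{r+1}\ge3$ are nonnegative. Hence
\[
D(y+h)-D(y)\ \ge\ (2/3)^q\sum_{n_1\ge\cdots\ge n_r\ge3}\prod_{i\le r}(2/n_i)^{k_i}\ \ge\ c_{\mathbf k}(2/3)^q .
\]
Dividing by $h=2^{-M-q}$ gives growth $\gtrsim(4/3)^q\to\infty$. The first thing to verify is this exact telescoping, i.e.\ that the $n_{r+1}=2$ slice of the depth-$(r+1)$ sum equals $D(\mathbf k)$.

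\textbf{Step 2: non-dyadic $y$.} Let $y=\beta(\mathbf k)$ with partial weights $K_r$. We use the truncations $\mathbf k^{[r]}$ and perturb far out. Choose a depth $r$ and set $y'=\beta^f(\mathbf k^{[r]}, q)$ for large $q$, with $q$ chosen so that $y'>y$. This requires $q\le k_{r+1}-1$, so the new one lands before the next one of $y$; then $h=y'-y\asymp2^{-(K_r+q)}$. We compare $D(y')$ with $D(\beta^f(\mathbf k^{[r]}))$ exactly as in Step 1, and compare $D(y)$ with $D(\beta^f(\mathbf k^{[r]}))$ using the convergence of the truncations. Here the lower-truncated tail bounds from the paper's contraction estimates are needed: the homogeneous part at variable three contracts by $2/3$, and the weighted remaining coefficients contract by at most $1/2$. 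These should give
\[
|D(y)-D(\beta^f(\mathbf k^{[r]}))-\text{(tail of size }\asymp(2/3)^{k_{r+1}}\text{)}|
\]
controlled, so that the gain $(2/3)^{q}$ with $q<k_{r+1}$ beats the tail.

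\textbf{Main obstacle.} When the $k_j$ stay bounded (e.g.\ $y$ with infinitely many consecutive ones), there is no room to insert a long block, and the choice above fails. In that case we would first try refining a \emph{zero} digit of $y$ located just after a position $K_r$. Concretely, we increase $y$ by $2^{-N}$ at a zero position $N$, which splits some $k_j$ into two parts. We then bound the change of $D$ from below by the forcing term at $n=3$, using the paper's positive-forcing description of the full coefficient: roughly $(2/3)^{\#\text{ones after}}\cdot 2^{-\#\text{zeros}}$-type factors. If needed, we would pass to a subsequence of depths $r$ along which the local digit pattern is fixed, making the constant $c$ uniform. This uniform lower bound on the gain relative to $2^{-N}$ is the step I expect to require the most care.

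Finally, since $D(y+h)-D(y)$ is then shown unbounded above over $h/\!\to0^+$, no finite right derivative exists. No finite ordinary derivative exists either.
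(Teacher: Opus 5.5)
Your Step~1 is correct. The slice $n_{r+1}=2$ of $S(\mathbf k,q)$ is exactly $S(\mathbf k)$, so $D(y+2^{-(M+q)})-D(y)=A(\mathbf k,q)\ge(2/3)^qA(\mathbf k)$.

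The gap is in Step~2. Your comparison point $y'=\beta^f(\mathbf k^{[r]},q)$ lies to the right of $y$, but it is not a monotone change of the digits of $y$. Besides creating a one at position $K_r+q$, it deletes every later one of $y$, and $D$ jumps \emph{down} at $y'$. Write $y=p\,0^s1\tau$ with $|p|=K_r+q-1$, $s=k_{r+1}-q\ge1$ and $a=a(p)$. Then
$D(y')=d(p)+\sum_m a_m$, while $D(y)=d(p)+F_{T_0^sa}(1\tau)$.
By Lemma~\ref{lem:all-one-tail}, the latter approaches $d(p)+\sum_m\frac{m(m-1)}2(2/m)^sa_m$ when $\tau$ begins with a long block of ones. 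For $s=1,2$ these weights are $m-1$ and $2(m-1)/m$, both larger than one, so $D(y')-D(y)$ can be negative.

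So the tail $\sum_{j>r}A(\mathbf k^{[j]})$ is not of size $(2/3)^{k_{r+1}}$ up to a harmless constant; later ones amplify it. A bound uniform in the tail holds only for $s\ge3$, where $\frac{m(m-1)}2(2/m)^s\le 8/9$. Your route therefore needs infinitely many runs of at least three zeros in the expansion of $y$. In that case $q=1$ already suffices, since $r\to\infty$ sends $h\to0$. This is not the dichotomy ``$k_j$ bounded or not'' that you named. Take the index $(2,2,\{1\}^{L_1},2,\{1\}^{L_2},\ldots)$ with $L_i$ chosen inductively large. Every admissible choice of $r,q$ then has $s=1$ and gives $D(y')<D(y)$. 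Your fallback for this case is only a sketch with unspecified factors.

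The missing idea is the one the paper uses for every $y$ at once (Lemma~\ref{lem:bit-flip}): flip a single zero digit at position $N$ to one and \emph{keep} all later digits.
\begin{enumerate}
\item There is no carry, so $y+2^{-N}$ has exactly this canonical expansion.
\item By \eqref{eq:word-update}, the contributions before position $N$ are unchanged.
\item The flipped digit inserts the new term $\sum_m a_m(w)\ge a_3(w)\ge\lambda^N$, where $w$ is the prefix of length $N-1$.
\item The next boundary vector becomes $T_1a(w)\ge T_0a(w)$. Since $T_0$ and $T_1$ are positive linear operators, every later boundary vector, and hence every later contribution, is at least the old one.
\end{enumerate}
Thus $D(y+2^{-N})-D(y)\ge\lambda^N$ with no tail estimate at all. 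Running $N$ through the infinitely many zero positions of the canonical expansion gives quotients at least $(4/3)^N$. Your Step~1 is the special case of a zero digit after the last one. The same argument handles every interior point, dyadic or not, with no case split.
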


Thus even on its continuity set, $D$ does not admit a finite local
linearization from the right.  This gives an arithmetic example of
nowhere finite differentiability with an explicitly known, dense
exceptional set for continuity.

To quantify the global oscillation, we next consider the graph.
For classical Weierstrass functions with horizontal scale $b^{-1}$ and
vertical scale $\lambda$, where $b\geq2$ is an integer and
$1/b<\lambda<1$, the expression
$2+\log\lambda/\log b$ gives the Hausdorff dimension in Shen's theorem
\cite{Shen2018}.  In our setting the corresponding scales are $1/2$ and
$2/3$, but the graph is discontinuous and the leading coefficient depends
on the binary prefix.  The analogy suggests the dimension to seek.

For the graph, write
\begin{equation}\label{eq:Gamma-intro}
 \Gamma_D=\big{\{}(y,D(y)):0<y<1/2\big{\}},
\end{equation}
and
\begin{equation}\label{eq:graph-cylinder-def}
 \Gamma_D(w)=\{(y,D(y)):y\in I_w\},
 \qquad
 \Gamma_{D,L}=\{(y,D(y)):0<y<2^{-L}\}.
\end{equation}
For a bounded set $E\subset\mathbb R^2$, let $N_\delta(E)$ denote the least
number of closed axis-parallel squares of side $\delta$ covering $E$, and define
\begin{equation}\label{eq:box-defs}
 \ldimB E=\liminf_{\delta\rightarrow0^+}
 \frac{\log N_\delta(E)}{-\log\delta},
 \qquad
 \udimB E=\limsup_{\delta\rightarrow0^+}
 \frac{\log N_\delta(E)}{-\log\delta}.
\end{equation}
For an arbitrary set, define
\begin{equation}\label{eq:modified-box-def}
 \udimMB E=
 \inf_{E\subset\bigcup_{i\ge1}E_i}
 \sup_i\udimB E_i,
 \qquad E_i\text{ bounded}.
\end{equation}
This is the packing dimension $\dimP E$ for Euclidean sets (Chapters $2.3$ and $3.5$, \cite{Falconer2014}).
 We use $\dimH$ for Hausdorff dimension.
Set
\begin{equation}\label{eq:constants-intro}
 \lambda=\frac23,
 \qquad h_*=-\log_2\lambda=\log_2\frac32,
 \qquad s_*=2-h_*=\log_2\frac83.
\end{equation}

\begin{theorem}\label{thm:dimensions}
One has
\begin{equation}\label{eq:global-main-dims}
 \dimH\Gamma_D=\dimP\Gamma_D=\udimMB\Gamma_D=s_*.
\end{equation}
More precisely:
\begin{enumerate}[label=\textnormal{(\roman*)}]
\item If $w$ is regular, then $\Gamma_D(w)$ is bounded and
\begin{equation}\label{eq:regular-main-dims}
 \dimH\Gamma_D(w)=\ldimB\Gamma_D(w)=\udimB\Gamma_D(w)=s_*.
\end{equation}
\item For every integer $L\ge1$, $\dimH\Gamma_{D,L}=s_*$.  If $L\ge5$, then
$\Gamma_{D,L}$ is bounded and
\begin{equation}\label{eq:zero-main-dims}
 \ldimB\Gamma_{D,L}=\udimB\Gamma_{D,L}=s_*.
\end{equation}
\end{enumerate}
\end{theorem}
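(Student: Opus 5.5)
The plan has three stages: prove the regular-cylinder statement (i), deduce the zero-cylinder statement (ii), and then assemble the global equalities.

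\emph{Upper bound on a regular cylinder.} Fix a regular word $w$ and count $\delta$-squares at dyadic scale $\delta=2^{-n}$. By \eqref{eq:finite-D-intro}, the finite-index values of $D$ are normalized star sums with all summation variables $\ge2$. I would decompose them according to the size of the lowest summation variable. This splits off a homogeneous part at $n_r=3$, which contracts by $\lambda=2/3$ each time the index is refined by one binary digit, plus a remainder whose weighted coefficients contract by at most $1/2$ per step. The aim is an oscillation bound
\[
\operatorname{osc}_{I_v}D\le C_w\,\lambda^{|v|-|w|}
\]
for every extension $v$ of $w$. The hypothesis of at least five zeros is what keeps the prefix sums, and hence $C_w$, finite. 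On a regular cylinder the jumps of Theorem~\ref{thm:continuity} are finite, and they are also controlled by $(2/3)^{M}$-type quantities: the $n_r=3$ term dominates, while the rest is bounded by the $1/2$-contraction. With this oscillation bound, each of the $2^{n-|w|}$ subcylinders of length $2^{-n}$ needs $O(\lambda^n2^n)$ squares. Hence
\[
N_\delta\bigl(\Gamma_D(w)\bigr)\lesssim 2^{n}\lambda^n 2^n=2^{n s_*},
\]
which gives $\udimB\Gamma_D(w)\le s_*$.

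\emph{Lower bound (the main obstacle).} The lower bound $\dimH\Gamma_D(w)\ge s_*$ will also give $\ldimB\ge s_*$.
1. Write $D$ on $I_w$, via the recursion at summation variable three, as an affine image of a Weierstrass/Takagi-type sum plus a perturbation. The leading part should be $\sum_j \lambda^j\,c(\text{prefix}_j)\,\epsilon_j(y)$, where the $\epsilon_j$ are binary digits of $y$ and the $c$'s lie between positive constants; the perturbation is smoother, with exponent $1$, because of the $1/2$-contraction.
2. Push Lebesgue measure on $I_w$ forward to the graph. For a fixed $x$-cylinder of length $2^{-n}$, the vertical distribution of $D$ is then an affinely distorted Bernoulli convolution with parameter $2/3$.
3. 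Since $\dim$ of that Bernoulli convolution is $1$ (Hochman/Varjú for this algebraic parameter), use a Ledrappier-type slicing (measure-transfer) argument. This shows that the local dimension of the lifted measure is $\ge 1+(1-h_*)=s_*$ almost everywhere, and the mass distribution principle then gives the bound.

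The delicate part is the measure transfer. Because the coefficients depend on the prefix, the vertical conditional measures are not exact Bernoulli convolutions. I would handle this by showing the distortion is bi-Lipschitz uniformly on cylinders, and by checking that the discontinuities contribute only countably many vertical segments of zero measure.

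\emph{Part (ii) and the global equalities.} For (ii), $(0,2^{-L})$ is, up to countably many points, a countable union of regular cylinders: words beginning with $L$ zeros, refined until five zeros appear. So $\dimH\Gamma_{D,L}=s_*$ by countable stability. For $L\ge5$ the whole interval is a single regular cylinder (its words already have five zeros), so the box bounds carry over by the same counting.

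For \eqref{eq:global-main-dims}: every point of $(0,1/2)$ lies in some regular cylinder, and there are countably many of them. Cover $\Gamma_D$ by the $\Gamma_D(w)$. This gives $\udimMB\le s_*$, and hence $\dimH\le\dimP=\udimMB\le s_*$. The lower bound $\dimH\Gamma_D\ge s_*$ follows from (i) applied to any single regular $w$.
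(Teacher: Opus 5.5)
Your upper bound, part (ii) and the global assembly follow the paper. The lower bound also has the paper's architecture: lift the measure to the graph, model the vertical conditional laws on the Bernoulli convolution $\nu$ with parameter $2/3$, invoke Hochman, then apply a mass-distribution argument.

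\textbf{The gap: the measure transfer.} This step is the heart of the lower bound, and the fix you propose would not deliver it.
\begin{itemize}
\item[(a)] On a level-$n$ cylinder $v$ the vertical coordinate is $d_a(v)+a_3(v)U(\eta)+R_{a(v)}(\eta)$. Here $R_{a(v)}(\eta)$ is in general not a function of $U(\eta)$, and $U$ is far from injective since $\lambda>1/2$. So there is no change of variables $u\mapsto\phi_v(u)$ whose bi-Lipschitz property could be checked.
\item[(b)] What is true and sufficient is that $a_3(v)\asymp\lambda^n$ while $0\le R_{a(v)}\le 6B(a)2^{-n}$, i.e.\ the error lives at the \emph{horizontal} scale. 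Take $z=(X(\omega),F_a(\omega))$ and $r\le 2^{-n}$, and suppose $\Btwo(z,r)$ projects into the level-$n$ dyadic interval of $X(\omega)$. Then membership in the ball forces $|U(\eta)-U(\sigma^n\omega)|\le C(3/4)^n$. Hence $\mu_a(\Btwo(z,r))\le 2^{-n}\nu\bigl([U(\sigma^n\omega)-C(3/4)^n,\,U(\sigma^n\omega)+C(3/4)^n]\bigr)$.
\item[(c)] These windows are centred at the \emph{moving} points $U(\sigma^n\omega)$, and $\dim\nu=1$ alone gives no bound there. The paper uses Feng--Hu exact dimensionality to obtain a Borel set $E_\eps$ with $\nu(E_\eps)>0$ and $\nu([u-r,u+r])\le C_\eps r^{1-\eps}$ for all $u\in E_\eps$ and $r<r_\eps$. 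It then uses the ergodic theorem for the shift to make $\{n:U(\sigma^n\omega)\in E_\eps\}$ of positive density. Consecutive good times therefore satisfy $n_{j+1}/n_j\to1$, and the bound interpolates to all radii.
\item[(d)] Small balls may straddle a level-$n$ dyadic boundary, across which the prefix changes. The paper takes radii $r_n=2^{-(1+\eps)n}$ and uses Borel--Cantelli to exclude this for all large $n$.
\end{itemize}
Together these give lower local dimension at least $\bigl(1+(1-\eps)\log_2(4/3)\bigr)/(1+\eps)$, hence $s_*$ (Proposition~\ref{prop:graph-transfer}). You cannot instead quote a Ledrappier or Ledrappier--Young theorem, because the system is not exactly self-affine: both $a_3(v)$ and the remainder depend on the prefix.

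\textbf{Smaller repairs.}
\begin{itemize}
\item Not every point of $(0,1/2)$ lies in a regular cylinder. A dyadic point whose terminating word has fewer than five zeros, such as $1/4$ or $1/2-2^{-M}$, lies in none. Either add the countable graph over $\mathbb D$ (modified box dimension zero), as the paper does, or cover by cylinders $w0^k$; the latter is legitimate because only the five-zero condition enters.
\item Likewise $0^L$ is not regular, but the count still applies: $a_m(0^L)=(2/m)^{L+1}$ gives $B(a(0^L))<\infty$ exactly when $L\ge5$.
\item The graph contains only actual values of $D$, so there are no vertical segments to dispose of. The dyadic coordinates are simply a countable, Lebesgue-null set.
\end{itemize}
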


The full graph is unbounded, so the global assertion uses the
countably stable dimensions.  Ordinary lower and upper box
dimensions are asserted on the bounded pieces specified in the theorem.
Throughout, the graph only contains the actual values of $D$; vertical
segments are not inserted at its jumps.

A graph dimension is a single global invariant and does not specify
where the strongest oscillations occur.  Pointwise H\"older exponents and
their Hausdorff spectrum address that finer question.  Jaffard's analysis
of Riemann's function \cite{Jaffard1996Riemann} and the study of Davenport
series by Durand--Jaffard \cite{DurandJaffard2013} illustrate how arithmetic
approximation and the location of singularities enter multifractal
analysis.  Here the relevant approximants are dyadic, and the lower
bounds for jumps in \eqref{eq:jump-main} provide a direct way to test
pointwise regularity.

For $y\in\mathcal C$, define its pointwise H\"older exponent by
\begin{equation}\label{eq:holder-exponent-main}
 \begin{split}
 h_D(y)=\sup\biggl\{h\ge0:\ &\exists\, C,r_0>0\text{ such that}\\
 &|D(z)-D(y)|\le C|z-y|^h\text{ for all }\\
 &z\in(0,1/2)\text{ with }0<|z-y|<r_0\biggr\}.
 \end{split}
\end{equation}
Only exponents below one will occur, so this agrees with the pointwise
polynomial-approximation convention used in multifractal analysis
\cite{JaffardMeyer1996,Jaffard1997I}.  Put
\begin{equation}\label{eq:tau2-main}
 \|t\|=\min_{m\in\mathbb Z}|t-m|,
 \quad
 \delta_n(y)=2^{-n}\|2^ny\|,
 \quad
 \tau_2(y)=\limsup_{n\to\infty}
 \frac{-\log_2\delta_n(y)}n.
\end{equation}
In particular, $1\le\tau_2(y)\le+\infty$.

\begin{theorem}\label{thm:holder-spectrum-main}
For every $y\in\mathcal C$,
\begin{equation}\label{eq:pointwise-holder-main}
 h_D(y)=\frac{h_*}{\tau_2(y)},
 \qquad \frac{h_*}{+\infty}=0.
\end{equation}
With $\dimH\varnothing=-\infty$, one has
\begin{equation}\label{eq:holder-spectrum-main}
 \dimH\{y\in\mathcal C:h_D(y)=h\}
 =\begin{cases}
 h/h_*,&0\le h\le h_*,\\
 -\infty,&h>h_*.
 \end{cases}
\end{equation}
Moreover, $h_D(y)=h_*$ for Lebesgue-almost every $y$.  Each value in
$[0,h_*]$ occurs densely.  The zero-exponent set is dense and uncountable,
but has Hausdorff dimension zero.
\end{theorem}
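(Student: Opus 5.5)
The plan is to prove the pointwise formula \eqref{eq:pointwise-holder-main} first and then derive the spectrum from it. The spectrum then reduces to a classical Jarník--Besicovitch computation for dyadic approximation.

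\emph{Upper bound on the exponent.} Fix $y\in\mathcal C$ and choose $n$ with $\delta_n(y)=2^{-n}\|2^ny\|$ small. Then there is a dyadic point $d_n=m2^{-n}$ with $|y-d_n|=\delta_n(y)$. Theorem~\ref{thm:continuity} gives a jump of size at least $(2/3)^{M}$ at $d_n$, where $M\le n$ is the weight of the finite index of $d_n$. Both $D(d_n)$ and $D(d_n^-)$ are limits of values of $D$ at points $z$ arbitrarily close to $d_n$ on either side, and these points are at distance about $\delta_n(y)$ from $y$. So one of the two one-sided limits differs from $D(y)$ by at least $\tfrac12(2/3)^n=\tfrac12 2^{-nh_*}$. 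Along a subsequence with $\delta_n(y)=2^{-n(\tau_2(y)-o(1))}$ this forces
\[
h\le \frac{nh_*}{n\tau_2(y)}\to \frac{h_*}{\tau_2(y)}.
\]
Some care is needed because the jump formula is stated for admissible dyadics in $(0,1/2)$. The points near $1/2-2^{-M}$ with infinite jump only help.

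\emph{Lower bound on the exponent.} This is the main obstacle. I would use the abstract's structural facts, namely the $2/3$-contraction of the homogeneous coefficient and the $\le 1/2$ contraction of the remaining weighted coefficients. From these I want an oscillation estimate on dyadic cylinders:
\[
\operatorname{osc}_{I_w}D\le C\,(2/3)^{|w|}
\]
for words $w$ along the expansion of $y$. This holds with a constant uniform as long as $y$ stays away from $1/2$, for example when $w$ is regular, which is presumably the same estimate used in the proof of Theorem~\ref{thm:dimensions}. Given $z$ with $|z-y|=\rho$, let $n$ be the level of the smallest dyadic interval containing both $y$ and $z$; it is at most the cylinder containing $[y,z]$. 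Dyadic points separating $y$ from $z$ at level $n+1$ lie within $\rho$ of $y$. Hence $\delta_{n+1}(y)\le\rho$, so $2^{-n\tau}\lesssim\rho$ for any $\tau>\tau_2(y)$ and large $n$. The oscillation bound then gives
\[
|D(z)-D(y)|\le C(2/3)^n\le C'\rho^{h_*/\tau}.
\]
Letting $\tau\downarrow\tau_2(y)$ gives $h_D(y)\ge h_*/\tau_2(y)$. The delicate point is that $[y,z]$ must lie in a single cylinder of level $n$ that does not contain the infinite-jump points $1/2-2^{-M}$. Since $y<1/2$ is fixed, this holds for small $\rho$.

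\emph{Spectrum.} By the formula, $\{h_D=h\}=\{y\in\mathcal C:\tau_2(y)=h_*/h\}$. The Jarník--Besicovitch theorem in base $2$ (via the mass transference principle, already announced in the preamble) gives $\dimH\{\tau_2=\tau\}=1/\tau$ for $\tau\in[1,\infty]$, with the value $0$ for $\tau=\infty$. This yields $h/h_*$, and the set is empty for $h>h_*$ since $\tau_2\ge1$. Almost every $y$ has $\tau_2(y)=1$ by Borel--Cantelli, so $h_D=h_*$ almost everywhere. Density holds because the level sets are invariant under changing finitely many binary digits: prefix replacement keeps $\tau_2$ fixed and can place $y$ in any cylinder. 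Finally, $\{\tau_2=\infty\}$ contains a Liouville-type Cantor set built from sparse long runs of zeros. That set is uncountable and dense in every cylinder, and its dimension is $0$ by the Jarník--Besicovitch upper bound.
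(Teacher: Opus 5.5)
Your proposal is correct and follows essentially the paper's proof. The jump $D(q_n^-)-D(q_n)\ge\lambda^{M_n}\ge\lambda^n$ at the nearest level-$n$ dyadic, combined with the triangle inequality, gives $h_D(y)\le h_*/\tau_2(y)$; the regular-cylinder oscillation bound \eqref{eq:symbolic-oscillation}, together with $\delta_j(y)\le|z-y|$ at the first differing digit, gives the reverse inequality; and the spectrum comes from covering bounds, the mass transference principle, Borel--Cantelli, and a sparse-digit construction for $E_\infty$.

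Two points differ slightly. First, you pass quickly from the limsup set $W_t$ to the exact level set $\{\tau_2=t\}$, which the paper justifies by $\HH^{1/t}(W_t\cap I)=+\infty$ together with $\HH^{1/t}(\{\tau_2>t\})=0$. Second, for density the paper uses its intervalwise dimension statement, whereas you use the invariance of $\tau_2$ under changing finitely many binary digits, which is an equally valid and more elementary shortcut.
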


The formula \eqref{eq:pointwise-holder-main} separates a universal
arithmetic scale, $h_*$, from the quality of approximation of the point
by dyadic rationals.  Long constant blocks in a binary expansion can place
the point unusually close to such a dyadic point and reduce its H\"older exponent.
The spectrum therefore distinguishes typical local stability from
exceptional sensitivity, even though both occur densely.  The exponent
is a supremum: the theorem does not assert that the H\"older estimate is
attained at the critical exponent.  Taken together, the four results
show how a single boundary contribution to a star tail controls the
regularity scale.

Let $m=\LL|_{(0,1/2)}$ be the Lebesgue measure on
$(0,1/2)$, and let $\rho=D_*m$ be its pushforward under
$D=\Rp\Phi$.  Thus $\rho$ is the Borel measure on $\R$
defined by
\begin{equation}\label{eq:global-vertical-distribution}
 \rho(A)
 =\LL\bigl(\{y\in(0,1/2):D(y)\in A\}\bigr)
\end{equation}
for every Borel set $A\subset\R$. Inspired by Solomyak's main results \cite{Solomyak1995}, we propose the following conjecture.

\begin{conjecture}
\label{conj:global-vertical-density}
The measure $\rho$ is absolutely continuous with respect to
Lebesgue measure and has a square-integrable density.
More precisely, there exists a nonnegative function
$f\in L^1(\R)\cap L^2(\R)$ such that, for every Borel set
$A\subset\R$,
\begin{equation}\label{eq:global-vertical-density}
 \LL\bigl(\{y\in(0,1/2):D(y)\in A\}\bigr)
 =\int_A f(u)\,du.
\end{equation}
In particular,
\[
 \int_{\R}f(u)\,du=\frac{1}{2},
 \qquad
 \int_{\R}f(u)^2\,du<+\infty.
\]
\end{conjecture}

\subsection{Arithmetic mechanism, applications, and organization}

The common mechanism behind the results remains within finite index
multiple zeta-star arithmetic.  For a finite index $\mathbf k$ of weight
$K$, the contribution of the boundary $n_r=m$ to a normalized star tail is
\begin{equation}\label{eq:alpha-intro}
 \alpha_m(\mathbf k)=
 2^K\sum_{n_1\ge\cdots\ge n_r=m}
 \frac1{n_1^{k_1}\cdots n_r^{k_r}},\qquad m\ge3.
\end{equation}
Appending an entry $k$ gives the positive recursion
\begin{equation}\label{eq:terminal-intro}
 \alpha_m(\mathbf k,k)
 =\left(\frac2m\right)^k
 \sum_{n\ge m}\alpha_n(\mathbf k).
\end{equation}
These are boundary coefficients of familiar lower truncated star sums,
not additional special values.  A cutoff decomposition expresses each
fixed-cutoff tail through star values of prefixes and finite multiple
harmonic star sums.  This keeps both the evaluations and the estimates
connected to the standard finite index theory.

The decisive step is to separate $m=3$ from $m\ge4$.  The homogeneous
part of the first coefficient produces the vertical contraction
$\lambda=2/3$; higher coefficients supply a nonnegative forcing term.
The identity
\[
 \sum_{m=4}^n\frac2m\binom m4=\frac12\binom n4
\]
yields a weighted contraction $1/2$ for the remaining coefficients.
After fixing a regular initial prefix and then $N$ additional binary
digits, the leading coefficient is comparable to $\lambda^N$, while
the new remainder is $O(2^{-N})$, uniformly over continuations.
The difference between these two rates explains the constants
$h_*=\log_2(3/2)$ and $s_*=2-h_*$.  This is a
\emph{cylinderwise} decomposition: it neither gives a globally Lipschitz
remainder nor makes the leading coefficient independent of the prefix.

For the Hausdorff lower bound, the leading symbolic series has the law
of a fair Bernoulli convolution with parameter $2/3$.  Bernoulli
convolutions relate overlapping self-similar measures to arithmetic
properties of their contraction parameter; see
\cite{PeresSchlagSolomyak2000}.  Solomyak's almost-everywhere absolute
continuity theorem \cite{Solomyak1995} is an important part of that
theory, but an almost everywhere parameter statement alone does not
settle a prescribed parameter.  We instead use exponential separation
at $2/3$, Hochman's dimension theorem, and Feng--Hu exact dimensionality
\cite{Hochman2014,FengHu2009}.  A separate measure transfer argument then
passes from the one dimensional convolution to the arithmetic graph.
No absolute continuity or uniform density bound for that convolution is
assumed.  For the pointwise spectrum, the jump test reduces the problem
to dyadic approximation, and the mass transference principle of
Beresnevich--Velani \cite{BeresnevichVelani2006} supplies the Hausdorff
lower bounds for the exact approximation levels.

These estimates have concrete uses within the zeta star correspondence.
First, \eqref{eq:finite-D-intro} transfers identities for multiple zeta
values to derivative evaluations.  Section~\ref{sec:examples} gives, for
example,
\[
 D(2^{-k})=2^k\bigl(\zeta(k)-1\bigr)\quad(k\ge2),
 \qquad D(1/3)=6,
\]
as well as explicit jump formulas and infinite product evaluations for
periodic indices.  Second, the symbolic oscillation estimate gives
uniform $O((2/3)^N)$ control of the uncertainty left by $N$ additional
binary digits within a fixed regular cylinder.  Retaining the leading
symbolic contribution improves the remaining binary-tail error to
$O(2^{-N})$; see
\eqref{eq:conditional-decomposition}--\eqref{eq:symbolic-oscillation}.
These are quantitative bounds for finite prefix approximation of $D$.
The distinct error from truncating the summation variable $m$ is bounded
in Corollary~\ref{cor:cutoff-error}; numerical errors in the initial
coefficients and the finite-prefix value must still be controlled
separately.  Third, the H\"older formula identifies the
points at which Euclidean error control deteriorates because of
exceptionally close dyadic approximation.  It complements, rather than
replaces, the metric approximation results for star values themselves.

The same separation of a leading boundary coefficient from a faster
contracting tail suggests a possible approach to deformed or
restricted-index correspondences.  Such an extension would require
its own positive recursion, contraction estimates, and dimension theorem
for the leading symbolic measure.  We do not establish these conditions
for the rational or complex variations cited above.

The logical inputs are separated as follows.  The zeta-star correspondence
and the derivative formula are imported from \cite{HMO2025,LiTopology2023}.
Proposition~\ref{prop:derivative-input} records the latter with its precise
normalization and hypotheses.  The tail identities, binary recursion,
jumps, and large right secants are then proved directly.  The geometric
arguments additionally use the dimension and exact dimensionality
theorems just cited, and the mass transference principle.  

The paper is organized as follows.  Section~\ref{sec:truncated}
establishes the truncated star identities and boundary coefficients.
Section~\ref{sec:binary} proves the two contraction estimates and the
cylinderwise decomposition.  Section~\ref{sec:regularity} treats the
one sided limits and infinite upper right Dini derivatives.
Section~\ref{sec:dimensions} proves the graph dimension formulas, and
Section~\ref{sec:holder} determines the pointwise exponents and their
Hausdorff spectrum.  Explicit finite index and periodic index
evaluations, together with the scope of the method, are collected in
Section~\ref{sec:examples}.

\section{Truncated multiple zeta-star sums}\label{sec:truncated}

\subsection{Harmonic sums and star tails}

For a finite index $\mathbf k=(k_1,\ldots,k_r)$, not necessarily admissible,
let
\begin{equation}\label{eq:harmonic-star}
 H_N^\star(\mathbf k)
 =\sum_{N\ge n_1\ge\cdots\ge n_r\ge1}
 \frac1{n_1^{k_1}\cdots n_r^{k_r}}.
\end{equation}
The strict version is denoted by $H_N(\mathbf k)$.  Empty-index values are
one; for a nonempty index we set $H_0^\star(\mathbf k)=0$.
For an admissible index and $q\ge1$, define the lower-truncated star tail
\begin{equation}\label{eq:star-tail}
 Z_q^\star(\mathbf k):=\zeta_{\ge q}^\star(\mathbf k)
 =\sum_{n_1\ge\cdots\ge n_r\ge q}
 \frac1{n_1^{k_1}\cdots n_r^{k_r}},
 \qquad Z_q^\star(\varnothing)=1.
\end{equation}
The auxiliary symbol $Z_q^\star$ is used to keep later formulas short.
In particular, $Z_1^\star=\zeta^\star$.

Write $\mathbf l\lhd\mathbf k$ when $\mathbf l$ is obtained from
$\mathbf k$ by merging some adjacent entries.  Partitioning a weakly
ordered summation domain according to its equalities gives
\begin{equation}\label{eq:coarsening}
 \zeta^\star(\mathbf k)=\sum_{\mathbf l\,\lhd\,\mathbf k}\zeta(\mathbf l),
 \qquad
 H_N^\star(\mathbf k)=\sum_{\mathbf l\,\lhd\,\mathbf k}H_N(\mathbf l).
\end{equation}
The same identity holds with the lower cutoff $q$ in every summation.
Thus all the finite-index quantities used below can equally be written in
strict MZV notation.  The star notation is preferable here because
concatenation and the boundary recursions remain positive.

\begin{lemma}\label{lem:cutoff}
For an admissible index $\mathbf k=(k_1,\ldots,k_r)$ and $q\ge1$,
\begin{equation}\label{eq:cutoff-convolution}
 \zeta^\star(\mathbf k)
 =\sum_{j=0}^r
 Z_q^\star(\mathbf k^{[j]})
 H_{q-1}^\star(k_{j+1},\ldots,k_r).
\end{equation}
In particular,
\begin{equation}\label{eq:cutoff-two}
 Z_2^\star(\mathbf k)
 =\zeta^\star(\mathbf k)-\zeta^\star(\mathbf k^{[r-1]}).
\end{equation}
\end{lemma}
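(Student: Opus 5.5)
The identity \eqref{eq:cutoff-convolution} comes from splitting the summation domain of $\zeta^\star(\mathbf k)$ at the cutoff $q$. Take a tuple $n_1\ge\cdots\ge n_r\ge1$ in the domain. Because the tuple is weakly decreasing, there is a unique $j\in\{0,\ldots,r\}$ with
\[
 n_1\ge\cdots\ge n_j\ge q>q-1\ge n_{j+1}\ge\cdots\ge n_r\ge1 .
\]
Here $j=0$ means every variable is at most $q-1$, and $j=r$ means every variable is at least $q$. For this fixed $j$, the two blocks of conditions are independent. The only link between $n_j$ and $n_{j+1}$ is $n_j\ge n_{j+1}$, and it holds automatically once $n_j\ge q$ and $n_{j+1}\le q-1$.

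The summand $\prod_i n_i^{-k_i}$ factors along the two blocks. Summing over the first block gives $Z_q^\star(\mathbf k^{[j]})$, and summing over the second gives $H_{q-1}^\star(k_{j+1},\ldots,k_r)$. All terms are positive, so Tonelli lets us regroup the convergent series freely. Adding over $j$ yields \eqref{eq:cutoff-convolution}.

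The edge cases match the conventions stated earlier. For $j=0$ the first factor is $Z_q^\star(\varnothing)=1$. For $j=r$ the second factor is the empty-index value $1$. When $q=1$, every $H_0^\star$ of a nonempty index vanishes, so only the $j=r$ term survives and the formula reduces to $Z_1^\star=\zeta^\star$. Admissibility ($k_1\ge2$) guarantees that each $Z_q^\star(\mathbf k^{[j]})$ with $j\ge1$ is finite. The $H$ factors are finite sums.

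For \eqref{eq:cutoff-two}, set $q=2$. The range $1\le n\le q-1$ then forces every variable in the second block to equal $1$, so $H_1^\star(k_{j+1},\ldots,k_r)=1$ for every $j$. The identity becomes
\[
 \zeta^\star(\mathbf k)=\sum_{j=0}^r Z_2^\star(\mathbf k^{[j]}).
\]
Applying the same identity to $\mathbf k^{[r-1]}$, which is admissible or empty, gives
\[
 \zeta^\star(\mathbf k^{[r-1]})=\sum_{j=0}^{r-1}Z_2^\star(\mathbf k^{[j]}).
\]
Subtracting the two displays leaves $Z_2^\star(\mathbf k)$. Both sides are finite, so the subtraction is legitimate.

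If $r=1$, the prefix is $\varnothing$ and the claim reads $Z_2^\star(k_1)=\zeta(k_1)-1$, which is immediate. No step here is a real obstacle. The only point needing care is the bookkeeping at the boundary indices $j=0,r$ and the empty-index conventions, which the decomposition handles uniformly.
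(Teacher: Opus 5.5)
Your proof is correct and follows the paper's argument for the convolution identity: the unique split index $j$ with $n_1,\ldots,n_j\ge q>n_{j+1},\ldots,n_r$, the two independent blocks, and positivity to regroup. For the $Z_2^\star$ formula there is a minor difference: the paper observes directly that the terms removed by the condition $n_r\ge2$ are those with $n_r=1$, which sum to $\zeta^\star(\mathbf k^{[r-1]})$, whereas you specialize to $q=2$ and subtract the identity for $\mathbf k^{[r-1]}$, which is an equivalent, slightly longer route.
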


\begin{proof}
In each summand defining $\zeta^\star(\mathbf k)$ there is a unique $j$
for which $n_1,\ldots,n_j\ge q$ and $n_{j+1},\ldots,n_r<q$.
The two groups of variables are then independent, which gives
\eqref{eq:cutoff-convolution}.  For \eqref{eq:cutoff-two}, the terms
excluded by the condition $n_r\ge2$ are precisely those with $n_r=1$;
their sum is $\zeta^\star(\mathbf k^{[r-1]})$.
\end{proof}

For fixed $q$, recursion in \eqref{eq:cutoff-convolution} expresses
$Z_q^\star(\mathbf k)$ as a rational linear combination of star values of
prefixes of $\mathbf k$, since every finite harmonic sum appearing there
is rational.  This observation also identifies the arithmetic content of
the boundary coefficients below.

We will use the complete-homogeneous-sum identity
\begin{equation}\label{eq:all-one-generating}
 \sum_{j=0}^{\infty}
 H_{[q,m]}^\star(\{1\}^j)t^j
 =\prod_{n=q}^m\left(1-\frac tn\right)^{-1},
 \qquad |t|<q,
\end{equation}
where $H_{[q,m]}^\star$ means that every variable lies in $[q,m]$.
It follows directly by assigning a nonnegative multiplicity to each
integer $q,\ldots,m$.  More generally, replacing $1/n$ by $1/n^a$
gives the repeated-entry version.  This is the usual generating-function
interpretation of multiple harmonic star sums.

For completeness, the analogous multiplicity argument with an eventual
summation variable equal to one can be made explicit.  For a fixed
integer $m\ge1$, ones fill the unused positions, so
\[
 H_m^\star(\{1\}^q)
 =\sum_{\substack{c_2,\ldots,c_m\ge0\\c_2+\cdots+c_m\le q}}
   \prod_{n=2}^m n^{-c_n}
 \rightarrow\prod_{n=2}^m(1-1/n)^{-1}=m.
\]
The empty product at $m=1$ is one.  Applying this limit to the final
block of a star sum gives
\begin{equation}\label{eq:infinite-ones-identity}
 \begin{split}
 &\zeta^\star(k_1,\ldots,k_{r-1},k_r+1,\{1\}^{\infty})\\
 &\quad=
 \sum_{n_1\ge\cdots\ge n_r\ge1}
 \frac1{n_1^{k_1}\cdots n_r^{k_r+1}}
 \prod_{m=2}^{n_r}\left(1-\frac1m\right)^{-1}
 =\zeta^\star(\mathbf k).
 \end{split}
\end{equation}
Here the finite product is $n_r$ and all limiting interchanges follow from
monotone convergence.  Since the binary coordinate of the index on the
left is $\beta^f(\mathbf k)$, this proves \eqref{eq:finite-phi} from the
infinite-index correspondence; see also \cite{LiTopology2023,Li2025}.

\subsection{The arithmetic derivative formula}

Define
\begin{equation}\label{eq:normalized-tails}
 S(\mathbf k)=2^{\wt(\mathbf k)}Z_2^\star(\mathbf k),
 \qquad
 A(\mathbf k)=2^{\wt(\mathbf k)}Z_3^\star(\mathbf k),
 \qquad S(\varnothing)=1.
\end{equation}
Only the next proposition uses differentiability results for
zeta star correspondence.

\begin{proposition}\label{prop:derivative-input}
The right derivative $D(y)$ is finite for every $0<y<1/2$.
For a finite admissible index,
\begin{equation}\label{eq:finite-derivative}
 D\bigl(\beta^f(\mathbf k)\bigr)
 =S(\mathbf k)
 =2^{\wt(\mathbf k)}
 \bigl(\zeta^\star(\mathbf k)-\zeta^\star(\mathbf k^{[r-1]})\bigr).
\end{equation}
For a non-dyadic $y=\beta(\mathbf k)$,
\begin{equation}\label{eq:Li-intro}
 D(y)=\Phi'(y)=\lim_{r\to\infty}S(\mathbf k^{[r]}).
\end{equation}
The extension $D(0)=1$ agrees with $\partial_+\Phi(0)$.
\end{proposition}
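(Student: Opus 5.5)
This proposition is mostly an import of known results: the differentiability theory of Hirose--Murahara--Onozuka \cite{HMO2025}, in the normalization of Proposition~2.1 of \cite{Li2026}. The plan is to recall those statements, translate their coordinates to ours, and then rewrite the finite-index value using Lemma~\ref{lem:cutoff}.

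\emph{Step 1: coordinates and the one-sided derivatives.} \cite{HMO2025} parametrize by $Z^\star(x)=\Phi(x/2)$ on $[0,1)$. Their one-sided derivative formulas give finite right derivatives of $Z^\star$ at every interior point, with an explicit value at finite-index (dyadic) points. By the chain rule, $\partial_+\Phi(y)=2\,\partial_+Z^\star(2y)$. I would check that this factor of $2$, together with their weight normalization, produces exactly the factor $2^{\wt(\mathbf k)}$ in \eqref{eq:finite-derivative}. This is the content of Proposition~2.1 in \cite{Li2026}, which I would cite directly. Finiteness at non-dyadic points follows from the fact that $\Phi$ is differentiable there, again by \cite{HMO2025}.

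\emph{Step 2: the identity for finite indices.} Given the value $2^{\wt(\mathbf k)}\bigl(\zeta^\star(\mathbf k)-\zeta^\star(\mathbf k^{[r-1]})\bigr)$ at $\beta^f(\mathbf k)$, equation \eqref{eq:cutoff-two} identifies the difference with $Z_2^\star(\mathbf k)$. Hence the value equals $S(\mathbf k)$ by \eqref{eq:normalized-tails}.

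\emph{Step 3: non-dyadic points.} Let $y=\beta(\mathbf k)\in\mathcal C$. Then $\Phi'(y)$ exists, and I would show it equals $\lim_r S(\mathbf k^{[r]})$. The truncations $\mathbf k^{[r]}$ are finite-index points whose coordinates $\beta^f(\mathbf k^{[r]})$ increase to $y$. If the cited source states the limit formula at irrational coordinates, I would simply quote it. Otherwise I would argue as follows. By monotone convergence,
\[
S(\mathbf k^{[r]})=\sum_{n_1\ge\cdots\ge n_r\ge2}\prod_{i\le r}(2/n_i)^{k_i},
\]
and the summand factors with $n_i=2$ contribute exactly $1$. The sequence therefore converges to the infinite product-sum given by the HMO derivative formula at $y$. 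That formula expresses $\Phi'(y)$ as the same series over infinite chains that are eventually equal to $2$.

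\emph{Step 4: the endpoint.} For $D(0)=1$, I would use $\Phi(2^{-k})=\zeta(k)$, which follows from \eqref{eq:finite-phi} with $\mathbf k=(k)$. This gives
\[
\frac{\zeta(k)-1}{2^{-k}}=2^k\bigl(\zeta(k)-1\bigr)\to1.
\]
For general small $y$ in $[2^{-k-1},2^{-k})$, monotonicity of $\Phi$ sandwiches the difference quotient between quantities of the form $\bigl(\zeta(k)-1\bigr)2^{k+1}$ and $\bigl(\zeta(k+1)-1\bigr)2^{k}$. These are asymptotically $2$ and $1/2$, so the sandwich is too crude. I would refine it by using the next digit as well: $\Phi(y)-1\sim 2^{-K_1}$ uniformly in the tail, since $\zeta^\star(\mathbf k)-1$ is dominated by the term $n_1=2$, which gives $2^{-k_1}\cdot$(tail contributions) $\approx 2^{-K_1}\cdot$ a factor close to $y2^{K_1}$.

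The main obstacle is Step 3 combined with this endpoint refinement: matching the external normalization precisely and justifying the interchange of limits. I would handle both by dominating the summands with the $n_i\ge2$ geometric bounds.
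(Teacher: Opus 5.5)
Your proposal is correct. For finiteness, the finite-index formula and the non-dyadic limit, it follows the paper: import the HMO formulas with the factor $2$ coming from $\Phi(y)=Z^\star(2y)$, then apply \eqref{eq:cutoff-two}. The paper carries out the matching you leave to ``normalization''. It writes HMO's right-derivative formula as a series in the binary digits and groups the zero digits, so the nonzero terms become $2^{K_j}Z_3^\star(\mathbf k^{[j]})$. It then partitions $Z_2^\star(\mathbf k)$ by its last variable $\ge3$, which gives $2^{K_r}Z_2^\star(\mathbf k)=1+\sum_{j\le r}2^{K_j}Z_3^\star(\mathbf k^{[j]})$. That partition is exactly the statement that HMO's series equals your sum over chains eventually equal to $2$. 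It also exhibits $S(\mathbf k^{[r]})$ as increasing partial sums, so monotone convergence alone gives \eqref{eq:Li-intro}; no domination is needed. You should also check HMO's hypotheses, as the paper does:
\begin{itemize}
\item the expansion of $2y$ has infinitely many zeros, which is needed for the right derivative;
\item at non-dyadic points it has infinitely many ones and at least two zeros, which is needed for two-sided differentiability.
\end{itemize}

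The genuine difference is the endpoint. The paper reads $D(0)=1$ off the same HMO formula with all digits zero. You instead prove $\partial_+\Phi(0)=1$ from first principles. Your refinement works, but two points in the sketch need correcting.
\begin{itemize}
\item Your intermediate claim $\Phi(y)-1\sim2^{-K_1}$ is false as stated. The ratio is asymptotically $y2^{K_1}\in[1,2)$, as your own next clause indicates.
\item The domination you need cannot come from geometric bounds on variables $\ge2$, since inner variables may equal $1$.
\end{itemize}
The exact version is as follows. In $\zeta^\star(\mathbf k)$, the term with $n_1=1$ equals $1$. The terms with $n_1=2$ sum to exactly $\sum_{j\le r}2^{-K_j}=\beta^f(\mathbf k)$. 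For the terms with $n_1\ge3$, use $H_n^\star(k_2,\ldots,k_r)\le H_n^\star(\{1\}^{r-1})\le n$, which is the multiplicity computation preceding \eqref{eq:infinite-ones-identity}. These terms then total at most $\sum_{n\ge3}n^{1-K_1}=O(3^{-K_1})$, uniformly in $r$. Passing to the limit gives $0\le\Phi(y)-1-y\le C\,3^{-K_1}$. Since $y\ge2^{-K_1}$, it follows that $(\Phi(y)-1)/y\to1$.

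Your route is self-contained and yields the quantitative expansion $\Phi(y)=1+y+O(3^{-K_1})$. It does not depend on HMO's theorem covering the boundary point $x=0$. The paper's route costs one line, given that citation.
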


\begin{proof}
Write $y=\sum_{j\ge1}b_j2^{-j}$ canonically, so $b_1=0$ and
$2y=\sum_{j\ge1}b_{j+1}2^{-j}$.  Since
$Z^\star(x)=\Phi(x/2)$, the derivative input is
$D(y)=2\partial_+Z^\star(2y)$.  Every canonical expansion of
$2y\in[0,1)$ has infinitely many zeros, including a terminating
expansion.  Thus Theorem~4.8 of \cite{HMO2025} applies to the right
derivative.  At a non-dyadic point there are also infinitely many ones
and at least two zeros, so the hypotheses of its Theorem~4.7 are
satisfied and the left derivative has the same finite value.
Theorem~4.10 treats the terminating case explicitly.  With the factor
of two from the change of variables included, these formulas give
\begin{equation}\label{eq:derivative-binary-input}
 D(y)=1+\sum_{d\ge1}b_{d+1}2^{d+1}
 \sum_{m_1\ge\cdots\ge m_d\ge3}
 \frac{\prod_{i=1}^{d-1}b_{i+1}^{m_i-m_{i+1}}}
      {m_1^2m_2\cdots m_d},
\end{equation}
with $0^0=1$ and an empty product equal to one.  This is a finite sum
in $d$ at a terminating expansion.  These theorems also give finiteness
and equality of the left and right derivatives at non-dyadic points.

The nonzero outer terms have $d+1=K_j$.  The zero digits force equalities
among adjacent $m_i$; grouping each resulting block gives exponents
$k_1,\ldots,k_j$.  The term is therefore
$2^{K_j}Z_3^\star(\mathbf k^{[j]})$.  On the other hand, partitioning
$Z_2^\star(\mathbf k)$ according to the last variable at least three gives
\[
 2^{K_r}Z_2^\star(\mathbf k)
 =1+\sum_{j=1}^r2^{K_j}Z_3^\star(\mathbf k^{[j]}).
\]
This identifies both the finite formula and the increasing sequence of
partial sums at a non-dyadic point. One can also consult Proposition $2.1$ in \cite{Li2026}. Equation~\eqref{eq:cutoff-two} gives the
star-value difference.  With every $b_j=0$, the same right-derivative
formula gives $D(0)=1$.
\end{proof}

\begin{lemma}\label{lem:prefix-expansion}
For every finite admissible index,
\begin{equation}\label{eq:prefix-expansion}
 S(\mathbf k)=1+\sum_{j=1}^r A(\mathbf k^{[j]}).
\end{equation}
Consequently, for a non-dyadic coordinate,
\begin{equation}\label{eq:infinite-positive-expansion}
 D\bigl(\beta(\mathbf k)\bigr)
 =1+\sum_{j=1}^{\infty}A(\mathbf k^{[j]}).
\end{equation}
\end{lemma}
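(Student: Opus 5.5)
The identity \eqref{eq:prefix-expansion} comes from sorting the summands of $Z_2^\star(\mathbf k)$ by where the summation variables drop below three. Write $\mathbf k=(k_1,\ldots,k_r)$ and $K_j=k_1+\cdots+k_j$. In a summand of $Z_2^\star(\mathbf k)$ every $n_i\ge2$, and the tuple is weakly decreasing. So there is a unique $j\in\{0,\ldots,r\}$ with $n_1\ge\cdots\ge n_j\ge3$ and $n_{j+1}=\cdots=n_r=2$. This is exactly the cutoff argument of Lemma~\ref{lem:cutoff} with $q=3$, with the lower variables confined to the single value $2$. The two groups of variables are independent, so summing over each $j$ gives
\[
 Z_2^\star(\mathbf k)=\sum_{j=0}^r Z_3^\star(\mathbf k^{[j]})\,2^{-(k_{j+1}+\cdots+k_r)}.
\]
Multiplying by $2^{K_r}$ turns the $j$-th term into $2^{K_j}Z_3^\star(\mathbf k^{[j]})=A(\mathbf k^{[j]})$. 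For $j=0$ the term is $2^{0}Z_3^\star(\varnothing)=1$, since $Z_3^\star(\varnothing)=1$. This yields \eqref{eq:prefix-expansion}. It is the same partition already displayed in the proof of Proposition~\ref{prop:derivative-input}; here I would just record it cleanly, noting that all terms are nonnegative so rearrangement is legitimate.

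For \eqref{eq:infinite-positive-expansion}, take a non-dyadic $y=\beta(\mathbf k)$ with $\mathbf k$ infinite. By \eqref{eq:Li-intro} in Proposition~\ref{prop:derivative-input}, $D(y)=\lim_{r\to\infty}S(\mathbf k^{[r]})$. Apply \eqref{eq:prefix-expansion} to each truncation $\mathbf k^{[r]}$, and observe that $(\mathbf k^{[r]})^{[j]}=\mathbf k^{[j]}$ for $j\le r$. Then $S(\mathbf k^{[r]})=1+\sum_{j=1}^rA(\mathbf k^{[j]})$ is the $r$-th partial sum of a fixed series with nonnegative terms. Its limit is therefore the full series, and the limit is finite because Proposition~\ref{prop:derivative-input} asserts finiteness of $D$.

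I see no real obstacle. The only care needed is the bookkeeping of the boundary term $j=0$ and making sure every truncation $\mathbf k^{[r]}$ with $r\ge1$ is admissible, which holds because $k_1\ge2$.
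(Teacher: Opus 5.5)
Your proposal is correct and follows the paper's proof essentially verbatim. You partition $Z_2^\star(\mathbf k)$ by the last summation variable that is at least three, the rest being forced to equal two, multiply by $2^{K_r}$, and pass to the limit of nonnegative partial sums via \eqref{eq:Li-intro}; the only cosmetic difference is that you take finiteness of the limit directly from Proposition~\ref{prop:derivative-input}, whereas the paper also cites Lemma~5.5 of \cite{Li2026}.
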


\begin{proof}
In the sum $Z_2^\star(\mathbf k)$, specify the last variable that is at
least three.  If there are $j$ such variables, the remaining variables
are all two, so their factor is $2^{-(K_r-K_j)}$.  After multiplication
by $2^{K_r}$, this contribution is $A(\mathbf k^{[j]})$.
The case $j=0$ contributes one.  Taking the limit in
Proposition~\ref{prop:derivative-input} proves the infinite version. Lemma $5.5$ in \cite{Li2026} prove that for $\beta({\bf k})$ is non-dyadic, 
$D(\beta({\bf k}))<+\infty$. 
\end{proof}

\subsection{Boundary coefficients}

For $m\ge3$ and a nonempty admissible index, set
\begin{equation}\label{eq:alpha-definition}
 \begin{split}
 \alpha_m(\mathbf k)
 &=2^{K_r}m^{-k_r}Z_m^\star(\mathbf k^{[r-1]})\\
 &=2^{K_r}\bigl(Z_m^\star(\mathbf k)-Z_{m+1}^\star(\mathbf k)\bigr).
 \end{split}
\end{equation}
Thus
\begin{equation}\label{eq:alpha-mass}
 A(\mathbf k)=\sum_{m\ge3}\alpha_m(\mathbf k),
 \qquad
 \alpha_3(\mathbf k)\ge\lambda^{\wt(\mathbf k)}.
\end{equation}
The second inequality is the contribution with every variable equal to
three.  Directly from the last variable,
\begin{equation}\label{eq:alpha-recursion}
 \alpha_m(\mathbf k,k)
 =\left(\frac2m\right)^k\sum_{n\ge m}\alpha_n(\mathbf k),
 \qquad
 \alpha_m(k)=\left(\frac2m\right)^k.
\end{equation}
This is \eqref{eq:terminal-intro} in truncated-star notation.

\begin{lemma}\label{lem:alpha-decay}
Let $e(\mathbf k)=\wt(\mathbf k)-\dep(\mathbf k)$.  There are positive
constants $c_{\mathbf k},C_{\mathbf k}$ such that, for every $m\ge3$,
\begin{equation}\label{eq:alpha-decay}
 c_{\mathbf k}m^{-e(\mathbf k)-1}
 \le\alpha_m(\mathbf k)
 \le C_{\mathbf k}m^{-e(\mathbf k)-1}.
\end{equation}
In particular,
\begin{equation}\label{eq:jump-finiteness-criterion}
 \sum_{m\ge3}(m-2)\alpha_m(\mathbf k)<+\infty
 \quad\Leftrightarrow\quad e(\mathbf k)>1.
\end{equation}
\end{lemma}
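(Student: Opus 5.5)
Induction on the depth $r$, using the recursion \eqref{eq:alpha-recursion}.

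\emph{Base case.} For $r=1$ we have $\alpha_m(k)=(2/m)^k$, and $e(k)+1=k$. So \eqref{eq:alpha-decay} holds with $c=C=2^k$.

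\emph{Inductive step.} Suppose $\mathbf k$ satisfies \eqref{eq:alpha-decay} with exponent $a=e(\mathbf k)+1$, and append an entry $k$. Since $\mathbf k$ is admissible, $e(\mathbf k)\ge1$, so $a\ge2$. Comparing the tail with an integral gives
\[
 \tfrac{1}{a-1}\,m^{-(a-1)}\le\sum_{n\ge m}n^{-a}\le \tfrac{1}{a-1}(m-1)^{-(a-1)}\le \tfrac{1}{a-1}\bigl(\tfrac32\bigr)^{a-1}m^{-(a-1)}
\]
for $m\ge3$. Combining this with the inductive bounds, $\sum_{n\ge m}\alpha_n(\mathbf k)$ is comparable to $m^{-(a-1)}=m^{-e(\mathbf k)}$, with constants depending only on $\mathbf k$. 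Multiplying by $(2/m)^k$ shows that $\alpha_m(\mathbf k,k)$ is comparable to $m^{-e(\mathbf k)-k}$. Finally,
\[
 e(\mathbf k,k)=e(\mathbf k)+k-1,
\]
so $e(\mathbf k,k)+1=e(\mathbf k)+k$, which is exactly the required exponent. This closes the induction.

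\emph{The criterion \eqref{eq:jump-finiteness-criterion}.} By \eqref{eq:alpha-decay}, the terms satisfy $(m-2)\alpha_m(\mathbf k)\asymp m^{-e(\mathbf k)}$ for $m\ge3$, since $m-2\asymp m$ there. A $p$-series $\sum m^{-p}$ converges iff $p>1$, so the sum is finite iff $e(\mathbf k)>1$. As a remark, $e(\mathbf k)=1$ occurs exactly for $\mathbf k=(2,\{1\}^{r-1})$.

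\emph{Main point of care.} The only delicate step is the lower comparison for the tail sum. This is where admissibility enters: it guarantees $a\ge2$, so the tail converges and behaves like $m^{1-a}$ uniformly in $m\ge3$. The remaining constants are routine.
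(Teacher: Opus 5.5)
Your proof is correct and follows the paper's argument: induction on depth via the recursion \eqref{eq:alpha-recursion}, the integral comparison showing that the tail of a sequence $\asymp m^{-p}$ with $p>1$ is $\asymp m^{1-p}$, the exponent update $e\mapsto e+k-1$, and a $p$-series test for \eqref{eq:jump-finiteness-criterion}. The only difference is in the upper constant: the paper bounds the tail by $m^{-e-1}+\int_m^\infty x^{-e-1}\,dx\le(\frac13+\frac1e)m^{-e}$, which gives the explicit recursive constant \eqref{eq:alpha-upper-constant} reused in Corollary~\ref{cor:cutoff-error}, whereas your factor $(3/2)^{e}/e$ suffices for the lemma as stated.
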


\begin{proof}
For $\dep(\mathbf k)=1$, \eqref{eq:alpha-decay} is immediate.
If a positive sequence is comparable to $m^{-p}$ with $p>1$, then its
tail from $m$ is comparable to $m^{1-p}$, by the integral comparison for
$\sum_{n\ge m}n^{-p}$.  Apply this to \eqref{eq:alpha-recursion}.
Appending $k$ changes $e$ to $e+k-1$ and changes the decay exponent
from $e+1$ to $e+k$.  Induction proves the bounds.
An explicit admissible upper constant is obtained recursively from
\begin{equation}\label{eq:alpha-upper-constant}
 C_{(k)}=2^k,\qquad
 C_{(\mathbf k,k)}=2^kC_{\mathbf k}
       \left(\frac13+\frac1{e(\mathbf k)}\right).
\end{equation}
Indeed, for $m\ge3$ and $e=e(\mathbf k)\ge1$,
\[
 \sum_{n\ge m}n^{-e-1}
 \le m^{-e-1}+\int_m^\infty x^{-e-1}\,dx
 \le\left(\frac13+\frac1e\right)m^{-e}.
\]
This proves the upper bound with the displayed constants under the same
induction.  Finally, the last series in the statement is comparable,
after omitting finitely many terms, to $\sum_m m^{-e(\mathbf k)}$,
proving \eqref{eq:jump-finiteness-criterion}.
\end{proof}

The binary word of $\mathbf k$ is
\begin{equation}\label{eq:index-word}
 w(\mathbf k)=0^{k_1-1}1\,0^{k_2-1}1\cdots0^{k_r-1}1.
\end{equation}
Its length is the weight, its number of ones is the depth, and its
number of zeros is $e(\mathbf k)$.  We use the latter quantity only as
weight minus depth. It is not the MZV height, which counts entries
greater than one.

\section{Binary extensions and two contraction rates}\label{sec:binary}

\subsection{A binary form of the boundary recursion}

Write $c_m=2/m$ for $m\ge3$.  For a nonnegative sequence
$a=(a_m)_{m\ge3}$, define
\begin{equation}\label{eq:binary-operators}
 (T_0a)_m=c_ma_m,
 \qquad
 (T_1a)_m=c_m\sum_{n\ge m}a_n.
\end{equation}
These operators simply record whether a new binary digit continues an
entry of the index or inserts a comma.  In particular, they preserve
nonnegativity and $T_0a\le T_1a$ coordinatewise.

For a word $w$ of length $M\ge1$ starting with zero, let $\mathbf k$ be
the finite index determined by its ones.  If the last one is at $K_r$,
put $t=M-K_r$; if there are no ones, set $\mathbf k=\varnothing$ and
$K_r=0$.  Define the \emph{next boundary vector}
\begin{equation}\label{eq:next-vector}
 a_m(w)=2^{M+1}m^{-(t+1)}Z_m^\star(\mathbf k).
\end{equation}
This is $\alpha_m(\mathbf k,t+1)$ when $\mathbf k\ne\varnothing$, and
$\alpha_m(M+1)$ otherwise.  For the initial word $w=0$,
$a_m(0)=c_m^2$.

Let $d(w)=D(x_w)$, using $D(0)=1$.  Equations
\eqref{eq:prefix-expansion} and \eqref{eq:alpha-recursion} imply
\begin{equation}\label{eq:word-update}
 \begin{split}
 a(wb)&=T_ba(w),\\
 d(wb)&=d(w)+b\sum_{m\ge3}a_m(w),\qquad b\in\{0,1\}.
 \end{split}
\end{equation}
Indeed, $b=0$ leaves the finite coordinate unchanged and increases the
next possible index entry by one.  For $b=1$, the new boundary contribution
is $\sum_m a_m(w)$, and a subsequent entry of size one gives $T_1a(w)$.

For $\omega=(\omega_1,\omega_2,\ldots)\in\SigmaTwo$, set
\begin{equation}\label{eq:tail-function}
 \begin{split}
 a^{(0)}&=a,
 \qquad a^{(j)}=T_{\omega_j}a^{(j-1)},\\
 F_a(\omega)&=\sum_{j=0}^{\infty}
 \omega_{j+1}\sum_{m\ge3}a_m^{(j)},
 \qquad
 X(\omega)=\sum_{j=1}^{\infty}\omega_j2^{-j}.
 \end{split}
\end{equation}
At first $F_a$ is allowed to take the value $+\infty$.  For a general
nonnegative vector, first truncate its initial support to $3\le m\le Q$
and take finitely many terms of the displayed series, then let both
cutoffs increase.  Positivity makes these limits independent of their
order; a zero binary digit contributes no term.
Thus $F_a$ is linear in $a$ on nonnegative sequences, by monotone
convergence.  For a canonical
binary tail, \eqref{eq:word-update} and the derivative input give
\begin{equation}\label{eq:cylinder-representation}
 D\bigl(x_w+2^{-M}X(\omega)\bigr)
 =d(w)+F_{a(w)}(\omega),
\end{equation}
provided the coordinate is in $(0,1/2)$ and the expansion $w\omega$ is
canonical.  At an eventually-one expansion, the right side is its
symbolic value, which need not be the value of $D$ at the same coordinate.

\subsection{A harmonic star generating function}

Let $\mathbf e_m$ be the sequence with a one at $m$ and zeros elsewhere.

\begin{lemma}\label{lem:all-one-tail}
For every nonnegative sequence $a$,
\begin{equation}\label{eq:all-one-F}
 0\le F_a(\omega)\le F_a(\{1\}^\infty)
 =\sum_{m\ge3}\frac{m(m-1)}2a_m,
\end{equation}
with equality interpreted in $[0,+\infty]$.
\end{lemma}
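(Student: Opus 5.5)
By linearity and monotone convergence (as set up after \eqref{eq:tail-function}), it suffices to treat $a=\mathbf e_m$ for a single $m\ge3$. Then we sum with weights $a_m\ge0$. Lower truncations of the support and finitely many terms of the series give nondecreasing approximants, so the identity in $[0,+\infty]$ passes to the limit.

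\emph{Step 1: monotonicity in $\omega$.} Nonnegativity $F_a\ge0$ is immediate, since every term is nonnegative. For the upper bound I compare $\omega$ with $\{1\}^\infty$ digit by digit. Put $a^{(j)}=T_{\omega_j}\cdots T_{\omega_1}a$ and $\tilde a^{(j)}=T_1^ja$. Both $T_0$ and $T_1$ are monotone, i.e.\ they preserve coordinatewise order on nonnegative sequences, and $T_0b\le T_1b$. Hence, by induction, $a^{(j)}\le\tilde a^{(j)}$ coordinatewise for every $j$. Consequently
\[
\omega_{j+1}\sum_m a^{(j)}_m\le\sum_m\tilde a^{(j)}_m
\]
for every $j$, and summing over $j$ gives $F_a(\omega)\le F_a(\{1\}^\infty)$.

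\emph{Step 2: evaluating $F_a(\{1\}^\infty)$.} Here
\[
F_{\mathbf e_m}(\{1\}^\infty)=\sum_{j\ge0}\sum_{n\ge3}(T_1^j\mathbf e_m)_n .
\]
Unwinding $T_1^j$, the term $(T_1^j\mathbf e_m)_n$ is a sum over chains $m\ge n_1\ge\cdots\ge n_j=n\ge3$ of $\prod_i(2/n_i)$, which is a complete homogeneous sum. Summing over $n$ and $j$, the total is
\[
\sum_{j\ge0}H^\star_{[3,m]}(\{1\}^j)\,2^j .
\]
By \eqref{eq:all-one-generating} with $q=3$ and $t=2<3$, this equals
\[
\prod_{n=3}^m\Bigl(1-\frac2n\Bigr)^{-1}=\prod_{n=3}^m\frac{n}{n-2}=\frac{m(m-1)}{2\cdot1},
\]
which telescopes as claimed. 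The case $m=3$ gives $3$, consistent with $T_1^j\mathbf e_3=(2/3)^j\mathbf e_3$.

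\emph{Main obstacle.} The main care is in the bookkeeping of Step 2. One must check that iterating $T_1$ produces exactly the weakly decreasing chains, with the factor $c_{n_i}$ attached to each chain element after the starting index $m$, and not to $m$ itself. Equivalently, one may verify by induction the cleaner identity
\[
\sum_{j\ge0}\sum_n(T_1^jb)_n=b\cdot v,\qquad v_m=\tfrac{m(m-1)}2 .
\]
This holds because $v$ satisfies the linear relation
\[
v_m=1+\sum_{n\le m}c_n v_n ,
\]
which reduces to $v_m-v_{m-1}=1+c_mv_m$, i.e.\ $(m-1)=1+(m-1)-1$ after multiplying out. This recurrence check is elementary, and I would present it as the verification of the generating-function step.
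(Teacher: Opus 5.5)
Your proof is correct and follows the paper's own route. You bound each contribution using monotonicity of the operators and $T_0\le T_1$, reduce to $\mathbf e_m$, identify $\sum_n(T_1^j\mathbf e_m)_n=2^jH^\star_{[3,m]}(\{1\}^j)$, apply \eqref{eq:all-one-generating} at $t=2$, and conclude by linearity and monotone convergence.

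Your optional recurrence check, however, contains a slip and should not replace the generating-function step. The difference relation should read $v_m-v_{m-1}=c_mv_m$, since the constant $1$ cancels, with base case $v_3=1+c_3v_3$. Moreover, that relation identifies the series only once you know it is finite: in $[0,+\infty]$ the value $+\infty$ also solves $g=1+c_mg$.
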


\begin{proof}
The coordinatewise inequalities $T_b\le T_1$ show that the all-one tail
maximizes each nonnegative contribution in \eqref{eq:tail-function}.
For $a=\mathbf e_m$,
\[
 \sum_{n\ge3}(T_1^j\mathbf e_m)_n
 =2^jH_{[3,m]}^\star(\{1\}^j).
\]
Consequently, \eqref{eq:all-one-generating} at $t=2$ gives
\[
 F_{\mathbf e_m}(1^\infty)
 =\prod_{n=3}^m\left(1-\frac2n\right)^{-1}
 =\frac{m(m-1)}2.
\]
Linearity and monotone convergence complete the proof.
\end{proof}

This elementary use of $H^\star(\{1\}^j)$ will account both for the dyadic
jumps and for the uniform control of all possible binary tails.

\subsection{A weighted tail estimate}

Introduce the nonnegative weighted sum
\begin{equation}\label{eq:weighted-B}
 B(a)=\sum_{m\ge4}\binom m4 a_m.
\end{equation}
The summation starts at four, so the dominant coefficient $a_3$ is
separate.

\begin{lemma}\label{lem:two-rates}
Suppose $a_3<+\infty$ and $B(a)<+\infty$.  Then, for $b\in\{0,1\}$,
\begin{equation}\label{eq:B-contract}
 B(T_ba)\le\frac12B(a),
\end{equation}
and equality holds when $b=1$.  Along every word of length $N$,
\begin{equation}\label{eq:state-bounds}
 B(a^{(N)})\le2^{-N}B(a),
 \qquad
 \lambda^Na_3\le a_3^{(N)}
 \le\lambda^N\bigl(a_3+4B(a)\bigr).
\end{equation}
\end{lemma}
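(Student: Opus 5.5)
The plan is to compute $B(T_1a)$ exactly by exchanging the order of summation. Then $B(T_0a)$ follows by coordinatewise comparison, and $a_3^{(N)}$ is handled by a scalar recursion that feeds in the $B$ bound. All quantities are nonnegative, so every interchange is justified by Tonelli's theorem, with values in $[0,+\infty]$.

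\emph{Step 1 (the combinatorial identity).} I first check that $\sum_{m=4}^n\frac2m\binom m4=\frac12\binom n4$. Since $\binom m4=\frac m4\binom{m-1}3$, each summand equals $\frac12\binom{m-1}3$. The hockey-stick identity $\sum_{m=4}^n\binom{m-1}3=\binom n4$ then gives the claim.

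\emph{Step 2 (contraction of $B$).} For $b=1$,
\[
 B(T_1a)=\sum_{m\ge4}\binom m4\frac2m\sum_{n\ge m}a_n
 =\sum_{n\ge4}a_n\sum_{m=4}^n\frac2m\binom m4
 =\frac12\sum_{n\ge4}\binom n4a_n=\frac12B(a).
\]
The coefficient $a_3$ never enters, because $n\ge m\ge4$. This gives equality for $b=1$. For $b=0$ we have $T_0a\le T_1a$ coordinatewise and $B$ has nonnegative weights, so $B(T_0a)\le B(T_1a)=\frac12B(a)$. Iterating along any word gives $B(a^{(j)})\le2^{-j}B(a)$, which is finite for every $j$.

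\emph{Step 3 (the coefficient at three).} From \eqref{eq:binary-operators},
\[
(T_ba)_3=\lambda a_3+b\lambda\sum_{n\ge4}a_n.
\]
Dropping the nonnegative second term gives $a_3^{(j+1)}\ge\lambda a_3^{(j)}$, hence $a_3^{(N)}\ge\lambda^Na_3$. For the upper bound, $\binom n4\ge1$ for $n\ge4$ gives $\sum_{n\ge4}a_n\le B(a)$. Combining this with Step 2,
\[
a_3^{(j+1)}\le\lambda a_3^{(j)}+\lambda2^{-j}B(a).
\]
Unrolling the recursion yields
\[
a_3^{(N)}\le\lambda^Na_3+B(a)\sum_{j=0}^{N-1}\lambda^{N-j}2^{-j}
=\lambda^N\Bigl(a_3+B(a)\sum_{j=0}^{N-1}(3/4)^j\Bigr)\le\lambda^N\bigl(a_3+4B(a)\bigr),
\]
since $\lambda^{-1}/2=3/4$.

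The only point needing care is the interchange in Step 2, together with the observation that the $m=3$ coordinate is excluded on both sides. That exclusion is exactly why the leading coefficient is treated separately. Beyond this, the argument is routine once the identity of Step 1 is in hand.
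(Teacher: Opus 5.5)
Your proof is correct and follows the paper's argument. For $T_1$ you interchange the sums and apply the identity $\sum_{m=4}^n\frac2m\binom m4=\frac12\binom n4$; for the coefficient at three you use the forced recursion, bound the forcing term by $\sum_{m\ge4}a_m^{(j)}\le B(a^{(j)})\le2^{-j}B(a)$, and sum the resulting geometric series with ratio $(2\lambda)^{-1}=3/4$. The only cosmetic difference is for $T_0$: you compare $T_0a\le T_1a$ coordinatewise, whereas the paper uses $c_m\le 1/2$ for $m\ge4$ directly, and both are immediate.
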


\begin{proof}
For $T_0$, use $c_m\le1/2$ for $m\ge4$.  For $T_1$, interchange the
nonnegative sums and use the binomial summation identity:
\begin{align*}
 B(T_1a)
 &=\sum_{n\ge4}a_n\sum_{m=4}^n\frac2m\binom m4\\
 &=\frac12\sum_{n\ge4}a_n\sum_{m=4}^n\binom{m-1}{3}
 =\frac12B(a).
\end{align*}
The first bound in \eqref{eq:state-bounds} follows by iteration.
For the coefficient at three, \eqref{eq:binary-operators} gives the
forced recursion
\begin{equation}\label{eq:forced-third-coefficient}
 a_3^{(j+1)}
 =\lambda a_3^{(j)}
  +\lambda\omega_{j+1}\sum_{m\ge4}a_m^{(j)}.
\end{equation}
Thus exact contraction by $\lambda$ holds for the homogeneous
$\mathbf e_3$ component, not for the full coefficient when the forcing
term is nonzero.  Since $\sum_{m\ge4}a_m^{(j)}\le B(a^{(j)})$, division by
$\lambda^{j+1}$ and summation give
\[
 a_3\le\lambda^{-N}a_3^{(N)}
 \le a_3+B(a)\sum_{j=0}^{N-1}(2\lambda)^{-j}
 \le a_3+4B(a).
\]
Here $(2\lambda)^{-1}=3/4$.
\end{proof}

\begin{lemma}\label{lem:zero-count}
Let $w$ start with zero, and let $e(w)$ be its number of zeros.  Then
$a_3(w)>0$, and
\begin{align}
 B(a(w))<+\infty
 &\quad\Leftrightarrow\quad e(w)\ge5,
 \label{eq:zero-weighted-threshold}\\
 \sup_{\omega\in\SigmaTwo}F_{a(w)}(\omega)<+\infty
 &\quad\Leftrightarrow\quad e(w)\ge3.
 \label{eq:zero-bounded-threshold}
\end{align}
In particular, the five-zero hypothesis is sufficient for all the
weighted estimates below, although three zeros suffice for bounded
symbolic tails.
\end{lemma}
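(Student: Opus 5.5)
The plan is to show that the next boundary vector satisfies
\[
a_m(w)\asymp_w m^{-e(w)-1}\qquad(m\ge 3),
\]
and then to read off both thresholds from this two-sided power bound. There are two cases, according to whether the word contains a one.

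\emph{Case $\mathbf k=\varnothing$.} Here $w=0^M$, so $e(w)=M$. Directly from \eqref{eq:next-vector},
\[
a_m(w)=2^{M+1}m^{-(M+1)}.
\]
This is exactly of the claimed form, and clearly $a_3(w)>0$.

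\emph{Case $\mathbf k\neq\varnothing$.} Since $w$ starts with zero, $k_1\ge2$. Hence $\mathbf k$ is admissible and $e(\mathbf k)\ge1$. By the definition \eqref{eq:alpha-definition}, taking differences of star tails,
\[
Z_m^\star(\mathbf k)=2^{-K_r}\sum_{n\ge m}\alpha_n(\mathbf k).
\]
Lemma~\ref{lem:alpha-decay} gives $\alpha_n(\mathbf k)\asymp n^{-e(\mathbf k)-1}$ with positive constants. The integral comparison already used in its proof then gives
\[
Z_m^\star(\mathbf k)\asymp m^{-e(\mathbf k)}\quad\text{for } m\ge3,
\]
and this is legitimate because $e(\mathbf k)+1\ge2>1$. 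Therefore
\[
a_m(w)\asymp m^{-(t+1)-e(\mathbf k)}.
\]
The zeros of $w$ are the $e(\mathbf k)$ zeros inside $w(\mathbf k)$ plus the $t$ trailing zeros, so $e(w)=e(\mathbf k)+t$, and $a_m(w)\asymp m^{-e(w)-1}$ as claimed. Positivity of $a_3(w)$ follows from the lower bound, or directly from the all-threes term in $Z_3^\star(\mathbf k)$.

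\emph{Threshold for $B$.} We have $\binom m4\asymp m^4$ for $m\ge4$. So $B(a(w))$ is comparable to $\sum_m m^{3-e(w)}$, which is finite iff $e(w)-3>1$, that is, iff $e(w)\ge5$. This proves \eqref{eq:zero-weighted-threshold}.

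\emph{Threshold for the symbolic tails.} Lemma~\ref{lem:all-one-tail} shows that the supremum over $\omega$ is attained at $\omega=\{1\}^\infty$ and equals
\[
\sum_{m\ge3}\tfrac{m(m-1)}2\,a_m(w)\asymp\sum_m m^{1-e(w)}.
\]
This is finite iff $e(w)\ge3$, giving \eqref{eq:zero-bounded-threshold}. The final remark of the lemma follows because $5\ge3$.

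The only delicate step is the exponent bookkeeping in the nonempty case. One must check that the tail comparison applies, which requires an exponent strictly greater than one; this holds because admissibility forces $e(\mathbf k)\ge1$. One must also check that the trailing zeros contribute exactly the factor $m^{-(t+1)}$ without an additional shift. Everything else is a direct consequence of Lemmas~\ref{lem:alpha-decay} and~\ref{lem:all-one-tail}.
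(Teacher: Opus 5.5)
Your proof is correct and follows essentially the paper's route. Both establish $a_m(w)\asymp_w m^{-e(w)-1}$ from Lemma~\ref{lem:alpha-decay}, then read off both thresholds from $\binom m4\asymp m^4$ and Lemma~\ref{lem:all-one-tail}.

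The only difference is cosmetic. The paper applies Lemma~\ref{lem:alpha-decay} directly to the candidate index $(\mathbf k,t+1)$, which has weight $M+1$ and depth $r+1$, so its weight minus depth is $M-r=e(w)$ at once. You instead apply the lemma to $\mathbf k$ and redo the one-step tail comparison by hand, which is just the inductive step of that lemma.
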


\begin{proof}
If $w$ has $r$ ones and length $M$, the candidate index defining $a(w)$
in \eqref{eq:next-vector} has weight $M+1$ and depth $r+1$.
Its weight minus depth is $M-r=e(w)$.
Lemma~\ref{lem:alpha-decay} gives $a_m(w)\asymp_w m^{-e(w)-1}$.
Since $\binom m4\asymp m^4$, the weighted series is comparable, after
finitely many terms are omitted, to $\sum_m m^{3-e(w)}$.  This
converges exactly when $e(w)>4$.  By Lemma~\ref{lem:all-one-tail},
the supremum in \eqref{eq:zero-bounded-threshold} equals
$\sum_{m\ge3}m(m-1)a_m(w)/2$, which is comparable to
$\sum_m m^{1-e(w)}$ and converges exactly when $e(w)>2$.
Positivity at three is immediate.  The same argument includes a word
with no ones, whose candidate index is $(M+1)$.
\end{proof}

\begin{corollary}\label{cor:cutoff-error}
Let $a$ be nonnegative with $\sum_{m\ge3}m^2a_m<+\infty$, and let
$a^{\le Q}$ agree with $a$ for $3\le m\le Q$ and be zero otherwise,
where $Q\ge3$ is an integer.  Then, uniformly for $\omega\in\SigmaTwo$,
\begin{equation}\label{eq:uniform-cutoff-error}
 0\le F_a(\omega)-F_{a^{\le Q}}(\omega)
 \le\frac12\sum_{m>Q}m(m-1)a_m.
\end{equation}
For $a=a(w)$ with $e=e(w)\ge3$, choose $C_w$ from
\eqref{eq:alpha-upper-constant} for its candidate index.  Then
\begin{equation}\label{eq:polynomial-cutoff-error}
 \sup_{\omega\in\SigmaTwo}
 |F_{a(w)}(\omega)-F_{a(w)^{\le Q}}(\omega)|
 \le\frac{C_w}{2(e-2)}Q^{2-e}.
\end{equation}
\end{corollary}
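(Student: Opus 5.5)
The plan is to prove \eqref{eq:uniform-cutoff-error} first, by linearity and positivity of $F$, and then bound the resulting tail with Lemma~\ref{lem:alpha-decay}.

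Write $a=a^{\le Q}+a^{>Q}$, where $a^{>Q}$ is the nonnegative remainder supported on $m>Q$. As noted after \eqref{eq:tail-function}, $F$ is linear on nonnegative sequences, with limits taken by monotone convergence. Hence
\[
 F_a(\omega)-F_{a^{\le Q}}(\omega)=F_{a^{>Q}}(\omega).
\]
This difference is well defined. Indeed, Lemma~\ref{lem:all-one-tail} bounds $F_{a^{\le Q}}(\omega)$ by $\sum_{m}m(m-1)a_m/2$, which is finite because $\sum m^2a_m<+\infty$, so no $\infty-\infty$ occurs.

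Lemma~\ref{lem:all-one-tail} applied to $a^{>Q}$ then gives
\[
 0\le F_{a^{>Q}}(\omega)\le F_{a^{>Q}}(\{1\}^\infty)=\tfrac12\sum_{m>Q}m(m-1)a_m
\]
uniformly in $\omega$. This is \eqref{eq:uniform-cutoff-error}.

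For \eqref{eq:polynomial-cutoff-error}, take $a=a(w)$, which equals $\alpha_m$ of the candidate index of weight minus depth $e=e(w)$ (see the proof of Lemma~\ref{lem:zero-count}). The upper bound in Lemma~\ref{lem:alpha-decay} with the constant $C_w$ from \eqref{eq:alpha-upper-constant} gives $a_m(w)\le C_w m^{-e-1}$. The hypothesis $\sum m^2a_m<+\infty$ then holds because $e\ge3$.

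Since $m(m-1)\le m^2$, we get
\[
 \tfrac12\sum_{m>Q}m(m-1)a_m\le\tfrac{C_w}2\sum_{m>Q}m^{1-e}.
\]
For $e\ge3$ the function $x^{1-e}$ is decreasing, so $\sum_{m>Q}m^{1-e}\le\int_Q^\infty x^{1-e}\,dx=Q^{2-e}/(e-2)$. This yields the stated bound.

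The only delicate point is to justify that the decomposition respects the limiting definition of $F$, that is, that the cutoffs can be taken in either order. This follows directly from monotone convergence, as remarked after \eqref{eq:tail-function}. One should also check that the candidate index used for $C_w$ is the one appearing in Lemma~\ref{lem:zero-count}. If $w$ has no ones, the index is $(M+1)$ and $C_{(M+1)}=2^{M+1}$, which is consistent.
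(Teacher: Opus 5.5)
Your proposal is correct and follows essentially the same route as the paper. For the first bound you use linearity of $F$ and apply Lemma~\ref{lem:all-one-tail} to the tail $a-a^{\le Q}$; for the second you use $a_m(w)\le C_w m^{-e-1}$, $m(m-1)\le m^2$, and the integral comparison $\sum_{m>Q}m^{1-e}\le Q^{2-e}/(e-2)$. Your extra checks, finiteness to rule out $\infty-\infty$ and the no-ones case $C_{(M+1)}=2^{M+1}$, are consistent with the paper's setup.
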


\begin{proof}
The defining nonnegative series makes $F_a$ linear in $a$.
The second-moment hypothesis and Lemma~\ref{lem:all-one-tail} make
all values finite.  Apply that lemma to $a-a^{\le Q}$ to obtain
\eqref{eq:uniform-cutoff-error}.  For the second assertion use
$a_m(w)\le C_wm^{-e-1}$, $m(m-1)\le m^2$, and
\[
 \sum_{m>Q}m^{1-e}\le\int_Q^\infty x^{1-e}\,dx
 =\frac{Q^{2-e}}{e-2}.
\]
This proves \eqref{eq:polynomial-cutoff-error}.
\end{proof}

Both operators preserve vectors supported on $\{3,\ldots,Q\}$, so the
truncated model uses a finite-dimensional recursion.  The bounds just
proved concern deletion of the initial coefficients above $Q$. They do
not by themselves control numerical errors in the retained coefficients,
in $d(w)$, or in a separate truncation of the binary expansion.

Define
\[
U: \SigmaTwo   \rightarrow [0,3],
\]
\begin{equation}\label{eq:U-definition}
 U(\omega)=\sum_{j=0}^{\infty}\omega_{j+1}\lambda^j,
 \qquad 0\le U(\omega)\le3.
\end{equation}

\begin{proposition}\label{prop:decomposition}
Suppose $a_3<+\infty$ and $B(a)<+\infty$.  Then
\begin{equation}\label{eq:leading-remainder}
 F_a(\omega)=a_3U(\omega)+R_a(\omega),
 \qquad 0\le R_a(\omega)\le6B(a).
\end{equation}
For a finite word $v=v_1\cdots v_N$, put
$a(v)=T_{v_N}\cdots T_{v_1}a$ and
\[
 d_a(v)=\sum_{j=0}^{N-1}v_{j+1}
 \sum_{m\ge3}(T_{v_j}\cdots T_{v_1}a)_m,
\]
where the operator product at $j=0$ is the identity.  Then
\begin{equation}\label{eq:conditional-decomposition}
 F_a(v\omega)
 =d_a(v)+a_3(v)U(\omega)+R_{a(v)}(\omega),
\end{equation}
where, uniformly in $v$ and $\omega$,
\begin{equation}\label{eq:remainder-cylinder}
 0\le R_{a(v)}(\omega)\le6B(a)2^{-N},
 \qquad
 a_3\lambda^N\le a_3(v)
 \le\bigl(a_3+4B(a)\bigr)\lambda^N.
\end{equation}
If two sequences agree in their first $N$ digits, then
\begin{equation}\label{eq:symbolic-oscillation}
 |F_a(\omega)-F_a(\eta)|
 \le\bigl(3a_3+18B(a)\bigr)\lambda^N.
\end{equation}
In particular, $F_a$ is continuous on the compact product space
$\SigmaTwo$.
\end{proposition}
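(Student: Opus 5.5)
The plan is to split the state vector as $a=a_3\mathbf e_3+a'$, where $a'$ is supported on $m\ge4$. By the linearity of $F_a$ on nonnegative sequences, $F_a=a_3F_{\mathbf e_3}+F_{a'}$. For $\mathbf e_3$ both operators act in the same way: $T_0\mathbf e_3=T_1\mathbf e_3=\lambda\mathbf e_3$, since the tail sum from $3$ of $\mathbf e_3$ is just its third coordinate. Hence $a^{(j)}=\lambda^j\mathbf e_3$ and $F_{\mathbf e_3}(\omega)=\sum_j\omega_{j+1}\lambda^j=U(\omega)$. This gives the leading term, with $R_a:=F_{a'}\ge0$.

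The next step is to bound $R_a$. I would write the $j$-th contribution $\omega_{j+1}\sum_{m\ge3}(a'^{(j)})_m$ and split it into the coordinate at $3$ and the coordinates $m\ge4$.
\begin{itemize}
\item For $m\ge4$, use $\sum_{m\ge4}(a'^{(j)})_m\le B(a'^{(j)})\le 2^{-j}B(a)$, by Lemma~\ref{lem:two-rates} applied to $a'$, which has $B(a')=B(a)$ and $a'_3=0$.
\item For the coordinate at $3$, the forced recursion \eqref{eq:forced-third-coefficient} with zero initial value gives
\[
(a'^{(j)})_3\le\sum_{i<j}\lambda^{j-i}B(a)2^{-i}=B(a)\,\lambda^j\sum_{i<j}(2\lambda)^{-i}\le 4B(a)\lambda^j .
\]
\end{itemize}
Summing over $j$ gives the total bound $B(a)\bigl(\sum_j2^{-j}+4\sum_j\lambda^j\bigr)=B(a)(2+12)$, which overshoots the claimed constant $6$. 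To recover $6$ I would be sharper: bound $(a'^{(j)})_3$ by $\lambda\sum_{i<j}\lambda^{j-1-i}\,2^{-i}B(a)$, interchange the sums over $i$ and $j$, and count each forcing term only once. The result is $\sum_i 2^{-i}B(a)\bigl(1+\lambda/(1-\lambda)\bigr)=2\cdot3\,B(a)=6B(a)$. Exchanging the order of summation is legitimate by monotone convergence, as in the definition of $F_a$.

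For the conditional decomposition, the definition of $F$ gives $F_a(v\omega)=d_a(v)+F_{a(v)}(\omega)$, because the first $N$ digits produce exactly the terms collected in $d_a(v)$. Applying \eqref{eq:leading-remainder} to $a(v)$ then yields \eqref{eq:conditional-decomposition}. The bound $B(a(v))\le2^{-N}B(a)$ and the estimate on $a_3(v)$ are exactly \eqref{eq:state-bounds}, and these give \eqref{eq:remainder-cylinder}. This also verifies the finiteness hypotheses needed to apply the first part to $a(v)$.

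For the oscillation bound, if $\omega$ and $\eta$ share the prefix $v$ of length $N$, then the $d_a(v)$ terms cancel and
\[
|F_a(\omega)-F_a(\eta)|\le a_3(v)\,|U(\omega')-U(\eta')|+6B(a)2^{-N}.
\]
Using $0\le U\le3$ and $a_3(v)\le(a_3+4B(a))\lambda^N$, this is at most $3(a_3+4B(a))\lambda^N+6B(a)\lambda^N=(3a_3+18B(a))\lambda^N$, since $2^{-N}\le\lambda^N$. Continuity on the product space follows, because cylinders form a neighbourhood base there.

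The main obstacle is the bookkeeping in the remainder bound: getting the constant $6$ requires treating the forcing contribution to the third coordinate carefully rather than using the crude uniform $4B(a)$ bound.
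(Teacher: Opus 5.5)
Your proof is correct, and your refined computation gives exactly the constant $6$. The mass $s_i=\sum_{m\ge4}(a'^{(i)})_m\le2^{-i}B(a)$ is counted once directly. It then re-enters through the coordinate at three with total weight at most $\lambda+\lambda^2+\cdots=2$. Hence $R_a\le3\sum_i s_i\le6B(a)$. The improvement over your first attempt does not come from a sharper bound on $(a'^{(j)})_3$. It comes from summing the same bound exactly: $\sum_j\lambda^j\sum_{i<j}(3/4)^i=4$, not $12$. The conditional decomposition, the cylinder bounds via \eqref{eq:state-bounds}, and the oscillation estimate are handled as in the paper.

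The difference lies in how the remainder is bounded. The paper does not iterate the dynamics there. It applies Lemma~\ref{lem:all-one-tail} to $a-a_3\mathbf e_3$, so that the all-one tail dominates. The harmonic-star generating function then gives directly $R_a(\omega)\le\sum_{m\ge4}\frac{m(m-1)}2a_m$. The proof finishes with $\frac{m(m-1)}2\le6\binom m4$ for $m\ge4$, since the ratio is $12/((m-2)(m-3))$.

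The paper's route gives a sharper intermediate bound that needs only a second moment of $a$. This is the same form the paper reuses in Corollary~\ref{cor:cutoff-error}. Your route uses only Lemma~\ref{lem:two-rates} and the forced recursion \eqref{eq:forced-third-coefficient}. It therefore avoids the generating-function identity altogether and shows where the constant comes from, namely $6=2\cdot(1-\lambda)^{-1}$. Both bounds are attained at $a=\mathbf e_4$, $\omega=1^\infty$.
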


\begin{proof}
Both $T_0$ and $T_1$ act on $\mathbf e_3$ by multiplication by $\lambda$.
Thus $F_{a_3\mathbf e_3}=a_3U$.  The contribution of the other coordinates
is nonnegative and, by Lemma~\ref{lem:all-one-tail}, is at most
\[
 \sum_{m\ge4}\frac{m(m-1)}2a_m
 \le6\sum_{m\ge4}\binom m4a_m.
\]
The last inequality follows from
$\frac{m(m-1)}{2\binom m4}=\frac{12}{(m-2)(m-3)}\le6$.
This proves \eqref{eq:leading-remainder}.

Splitting the series after $N$ digits gives
$F_a(v\omega)=d_a(v)+F_{a(v)}(\omega)$.
Apply Lemma~\ref{lem:two-rates} to obtain
\eqref{eq:conditional-decomposition}--\eqref{eq:remainder-cylinder}.
The range of $F_{a(v)}$ has diameter at most
$3a_3(v)+6B(a(v))$, which is bounded by the right side of
\eqref{eq:symbolic-oscillation}.  Uniform decay of these cylinder
oscillations proves continuity.
\end{proof}

\begin{remark}\label{rem:two-remainders}
The remainder in \eqref{eq:leading-remainder} need not have a better
Euclidean modulus of continuity than the whole function.  The useful
statement is \eqref{eq:remainder-cylinder}: after recentering on a cylinder,
its \emph{new} remainder is $O(2^{-N})$.  Also, the leading coefficient
$a_3(v)$ depends on the prefix.  Neither independence of the remainder
nor a fixed global leading coefficient is needed in the arguments below.
\end{remark}

\begin{corollary}\label{cor:compact-graph}
For every word $w$ starting with zero and containing at least five zeros,
the symbolic graph
\begin{equation}\label{eq:compact-symbolic-graph}
 K_w=\bigl\{(x_w+2^{-|w|}X(\omega),
              d(w)+F_{a(w)}(\omega)):\omega\in\SigmaTwo\bigr\}
\end{equation}
is compact and bounded.  One has $\Gamma_D(w)\subset K_w$, and
$K_w\setminus\Gamma_D(w)$ consists of at most countably many points,
all over dyadic coordinates, including possibly endpoints.  The set $K_w$
does not insert vertical line segments at jumps.  Moreover,
\begin{equation}\label{eq:closure-graph}
 \overline{\Gamma_D(w)}=K_w.
\end{equation}
\end{corollary}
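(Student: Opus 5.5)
The plan is to treat $K_w$ as the image of the compact space $\SigmaTwo$ under a continuous map, and then compare it with the actual graph point by point.

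\emph{Compactness.} Since $w$ starts with zero and has at least five zeros, Lemma~\ref{lem:zero-count} gives $a_3(w)<+\infty$ and $B(a(w))<+\infty$. Proposition~\ref{prop:decomposition} then says $F_{a(w)}$ is continuous on $\SigmaTwo$, with $0\le F_{a(w)}\le 3a_3(w)+6B(a(w))$. The coordinate map $\omega\mapsto x_w+2^{-|w|}X(\omega)$ is also continuous. Hence $K_w$ is the continuous image of a compact space, so it is compact and bounded.

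\emph{Inclusion.} Take $y\in I_w$ and write its canonical (eventually-zero at dyadics) expansion as $w\omega$. Then \eqref{eq:cylinder-representation} gives $D(y)=d(w)+F_{a(w)}(\omega)$, so $(y,D(y))\in K_w$. Points of $K_w$ not of this form come from non-canonical $\omega$, namely eventually-one tails. There are countably many such $\omega$, and each sits over a dyadic coordinate. This includes the right endpoint $x_w+2^{-|w|}$ (tail $1^\infty$) and coordinates outside $(0,1/2)$. Each $\omega$ contributes one point, so no vertical segments arise.

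\emph{Closure.} Since $K_w$ is closed and contains $\Gamma_D(w)$, we have $\overline{\Gamma_D(w)}\subset K_w$. For the reverse inclusion, fix $\omega\in\SigmaTwo$. I would approximate it by non-eventually-one sequences $\omega^{(N)}$ that agree with $\omega$ in the first $N$ digits and then continue with a fixed non-dyadic tail, say $(01)^\infty$ shifted so the resulting point lies in $(0,1/2)$. The expansions $w\omega^{(N)}$ are canonical, and the points lie in $I_w$: the first digit is zero, so $y<1/2$, and the tail guarantees $y>0$. Then \eqref{eq:cylinder-representation} applies. By \eqref{eq:symbolic-oscillation},
\[
|F_{a(w)}(\omega^{(N)})-F_{a(w)}(\omega)|\le(3a_3(w)+18B(a(w)))\lambda^N\to0,
\]
while the horizontal coordinates converge at rate $2^{-|w|-N}$. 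Hence every point of $K_w$ is a limit of graph points, which gives \eqref{eq:closure-graph}.

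\emph{Main obstacle.} The delicate point is making sure the approximating points lie in $I_w\cap(0,1/2)$. This matters in particular when $w$ is all zeros followed by an eventually-one tail, and near the endpoint $1/2$. The choice of a tail containing infinitely many zeros and ones avoids both issues, since the resulting points are strictly interior to the cylinder.
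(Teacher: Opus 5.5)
Your proof is correct and follows the paper's argument. Compactness comes from $K_w$ being a continuous image of $\SigmaTwo$ via Proposition~\ref{prop:decomposition}; the countable exceptional set comes from the second expansion of dyadic coordinates; and closure follows from density of genuine graph points in $K_w$. The only difference is in that density step: the paper approximates with eventually-zero tails, omitting an excluded endpoint, where you use a non-dyadic tail, which works equally well.

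One small correction: when $w=0^{L}$, the canonical code $\omega=0^\infty$ also yields a point of $K_w\setminus\Gamma_D(w)$, namely $(0,1)$ over the excluded coordinate $0$. By contrast, eventually-one tails never leave $(0,1/2)$, because five zeros force $x_w+2^{-|w|}<1/2$. This does not affect the conclusion.
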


\begin{proof}
Compactness follows from Proposition~\ref{prop:decomposition} and the
continuity of $X$.  A non-dyadic coordinate has only one binary expansion,
and \eqref{eq:cylinder-representation} gives its value of $D$.
A dyadic coordinate has at most two expansions.  This proves the
countable-difference assertion.  Eventually-zero tails, omitting an
excluded endpoint if necessary, are dense in $\SigmaTwo$ and have the
correct values of $D$.  Their images are therefore dense in $K_w$.
\end{proof}

\section{One-sided limits and nowhere differentiability}\label{sec:regularity}

\subsection{The left jump at a finite-index coordinate}

\begin{proposition}\label{prop:dyadic-gap}
Let $\mathbf k=(k_1,\ldots,k_r)$ be admissible, let
$M=\wt(\mathbf k)$, and put $y=\beta^f(\mathbf k)$.  Then
\begin{equation}\label{eq:exact-left-gap}
 D(y^-)=D(y)+\sum_{m\ge3}(m-2)\alpha_m(\mathbf k)
\end{equation}
in the extended real line.  The limit is approached by the sequence in
\eqref{eq:dyadic-left-sequence-main}.
\end{proposition}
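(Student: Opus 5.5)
The plan is to read off the left neighbourhood of $y$ from the binary cylinder machinery of Section~\ref{sec:binary}. Write $w(\mathbf k)=u1$, where $u$ has length $M-1$ and starts with zero. Then $y=x_u+2^{-M}$, and every point slightly to the left of $y$ lies in the cylinder $I_{u0}$, close to its right endpoint. In particular
\[
 y_q=y-2^{-(M+q)}=x_{u0}+2^{-M}X(1^q0^\infty).
\]
The canonical expansion of any $z\in I_{u0}$ is eventually zero or non-dyadic, so \eqref{eq:cylinder-representation} gives
\[
 D(z)=d(u0)+F_{a(u0)}(\omega),
\]
where $\omega$ is the canonical tail of $z$.

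The first step is the limit along the sequence $y_q$. Since zero digits contribute no terms, $F_{a(u0)}(1^q0^\infty)$ is the $q$-th partial sum of the nonnegative series defining $F_{a(u0)}(1^\infty)$. By monotone convergence it increases to $F_{a(u0)}(1^\infty)$ in $[0,+\infty]$. Lemma~\ref{lem:all-one-tail} evaluates this limit as $\sum_{m\ge3}\frac{m(m-1)}2a_m(u0)$.

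The second step handles an arbitrary $z\uparrow y$, and I expect it to be the only delicate point, because $F$ need not be monotone in $\omega$. If $z\in I_{u0}$ is within $2^{-(M+N)}$ of $y$, its tail $\omega$ begins with $1^N$. Two bounds then squeeze $F_{a(u0)}(\omega)$:
\begin{itemize}
\item Positivity of every term gives $F_{a(u0)}(\omega)\ge F_{a(u0)}(1^N0^\infty)$.
\item Lemma~\ref{lem:all-one-tail} gives $F_{a(u0)}(\omega)\le F_{a(u0)}(1^\infty)$.
\end{itemize}
Letting $N\to\infty$, both bounds converge to the same extended value. So $D(y^-)$ exists, equals the limit along $y_q$, and the argument works equally when that value is $+\infty$.

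The final step is the algebraic identification. I use $d(u0)=d(u)=D(x_u)$, with $D(0)=1$ when $u$ contains no ones. From \eqref{eq:word-update},
\[
 a_m(u0)=c_ma_m(u)=\tfrac2m\,a_m(u),
 \qquad
 D(y)=d(u1)=d(u)+\sum_{m\ge3}a_m(u).
\]
Hence
\[
 D(y^-)=d(u)+\sum_{m\ge3}(m-1)a_m(u)=D(y)+\sum_{m\ge3}(m-2)a_m(u).
\]
It remains to check that $a_m(u)=\alpha_m(\mathbf k)$. By \eqref{eq:next-vector}, the candidate index for $u$ is $(\mathbf k^{[r-1]},t+1)$ with $t+1=k_r$, and its normalization is $2^{(M-1)+1}=2^{M}$. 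This matches \eqref{eq:alpha-definition}, including the case $r=1$, where $a_m(u)=\alpha_m(M)=(2/m)^{k_1}$. All rearrangements of nonnegative series are justified in $[0,+\infty]$.
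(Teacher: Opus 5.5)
Your proposal is correct and follows essentially the same route as the paper. Both arguments pass to the prefix $p=u$ with boundary vector $a=\alpha(\mathbf k)$, apply $T_0$, and evaluate the all-one tail by Lemma~\ref{lem:all-one-tail}. Both then read off $D(y_q)$ as increasing partial sums and squeeze a general $z\to y^-$ between these partial sums and the all-one upper bound. The only difference is the order of presentation: the paper first computes the symbolic value of the alternative expansion $p0\,1^\infty$ and then identifies it with the left limit, which is immaterial.
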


\begin{proof}
Write the terminating word of $y$ as $p1$, where $p$ has length $M-1$.
The candidate boundary vector immediately before this last digit is
$a_m=\alpha_m(\mathbf k)$.  Set $d_0=d(p)$, so
\[
 D(y)=d_0+\sum_{m\ge3}a_m.
\]
The alternative expansion of $y$ is $p0\,1^\infty$.  By
Lemma~\ref{lem:all-one-tail}, its symbolic value is
\begin{align*}
 d_0+F_{T_0a}(1^\infty)
 &=d_0+\sum_{m\ge3}\frac{m(m-1)}2\frac2m a_m\\
 &=d_0+\sum_{m\ge3}(m-1)a_m.
\end{align*}
Subtracting the finite number $D(y)$ gives
\eqref{eq:exact-left-gap}.

It remains to identify this symbolic value with the actual left limit.
The point $y_q$ has expansion $p0\,1^q0^\infty$, so its value is the
$q$-term partial sum of $d_0+F_{T_0a}(1^\infty)$.  These values increase
to the displayed symbolic value.  A general $z\rightarrow y^-$ eventually has
expansion $p0$ followed by arbitrarily many initial ones.  Nonnegativity
of the remaining contributions gives the same partial sums as lower
bounds, whereas the all-one tail is an upper bound by
Lemma~\ref{lem:all-one-tail}.  This proves convergence, including when
the upper bound is infinite.
\end{proof}

\begin{proof}[Proof of Theorem \ref{thm:continuity}]
Finiteness is part of Proposition \ref{prop:derivative-input} and Lemma \ref{lem:prefix-expansion}.
At a non-dyadic point, both zeros and ones occur infinitely often.
Choose an admissible prefix containing at least five zeros.  The point
lies in the interior of its cylinder.  As nearby coordinates approach
it, their canonical expansions share arbitrarily long prefixes with its
expansion.  Equation~\eqref{eq:symbolic-oscillation} therefore proves
continuity.

At a dyadic point $y$, prolong its terminating word by sufficiently many
zeros to obtain a word $v$ containing at least five zeros.  Then
$x_v=y$, and all sufficiently close points on its right lie in $I_v$.
Their tail sequences converge to $0^\infty$, where $F_{a(v)}=0$.
Proposition~\ref{prop:decomposition} proves right-continuity at $y$.

Proposition~\ref{prop:dyadic-gap} proves the left-limit assertion and,
by \eqref{eq:alpha-definition}, the series in \eqref{eq:jump-main}.
Its term at $m=3$ is at least $\lambda^M$ by
\eqref{eq:alpha-mass}; hence every dyadic point is a discontinuity.
By Lemma~\ref{lem:alpha-decay}, the jump diverges precisely when
$e(\mathbf k)=1$.  Since $k_1\ge2$, this is equivalent to
$\mathbf k=(2,\{1\}^{r-1})$.  Its word is $01^{M-1}$ and its coordinate
is $1/2-2^{-M}$.  This completes all the assertions.
\end{proof}

\begin{corollary}\label{cor:jump-size}
Let $w$ be a fixed word starting with zero and containing at least five
zeros.  There is a constant $C_w>0$ such that every dyadic point $q$ in
the interior of $I_w$, of reduced binary level $M$, satisfies
\begin{equation}\label{eq:jump-comparison}
 \lambda^M\le D(q^-)-D(q)\le C_w\lambda^M.
\end{equation}
\end{corollary}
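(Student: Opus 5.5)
The plan is to use Proposition~\ref{prop:dyadic-gap}, which gives the jump at $q$ as a series over the boundary vector immediately before the last digit of $q$:
\[
D(q^-)-D(q)=\sum_{m\ge3}(m-2)\alpha_m(\mathbf k),\qquad q=\beta^f(\mathbf k),\ \wt(\mathbf k)=M.
\]
The lower bound needs nothing from $w$. The term $m=3$ alone is $\alpha_3(\mathbf k)\ge\lambda^M$, by \eqref{eq:alpha-mass}, and all other terms are nonnegative.

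For the upper bound, I would first locate $q$ relative to $w$. Let $N_0=|w|$. Since $q$ lies in the interior of $I_w$ and has reduced level $M$, its terminating word has the form $wv1$ with $|v|=M-N_0-1\ge0$. This works because $q\neq x_w$, so $q$ has a digit one beyond position $N_0$, and its last one sits at position $M$. Write $p=wv$. As in the proof of Proposition~\ref{prop:dyadic-gap}, the vector $\alpha_m(\mathbf k)$ equals $a_m(p)=a_m(T_v\,a(w))$, where $T_v$ is the composite of the operators $T_0,T_1$ along $v$, as in Proposition~\ref{prop:decomposition}.

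Put $a=a(w)$. Since $w$ has at least five zeros, Lemma~\ref{lem:zero-count} gives $B(a)<\infty$ and $a_3<\infty$. Set $N=|v|$. I would then split the jump as
\[
\sum_{m\ge3}(m-2)a_m(p)=a_3(p)+\sum_{m\ge4}(m-2)a_m(p).
\]
By \eqref{eq:state-bounds}, the first term satisfies $a_3(p)\le(a_3+4B(a))\lambda^{N}$. For $m\ge4$ we have $(m-2)\le 2\binom m4$, since $\binom44=1$, $\binom54=5$, and the binomial grows faster thereafter. So the second term is at most $2B(a(p))\le 2B(a)2^{-N}\le 2B(a)\lambda^N$.

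Together, the jump is at most $(a_3+6B(a))\lambda^{N}$. Since $N=M-N_0-1$, this equals $(a_3+6B(a))\lambda^{-N_0-1}\lambda^M$. We can therefore take
\[
C_w=(a_3(w)+6B(a(w)))\lambda^{-|w|-1},
\]
which is at least $1$ by the lower bound just proved.

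The only step needing care is the bookkeeping identifying $\alpha(\mathbf k)$ with the state $T_v a(w)$ along the prefix. This follows from \eqref{eq:word-update} together with the definition \eqref{eq:next-vector}, where the candidate index with last entry $t+1$ matches $\mathbf k$ once the final one is appended. Everything else is direct from Lemma~\ref{lem:two-rates}.
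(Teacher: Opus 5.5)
Your proposal is correct and follows the paper's proof. Like the paper, you identify the vector just before the last one of $q$ as $a(w)$ after $M-|w|-1$ updates, bound the jump by $a_3+2B$ using $(m-2)\le2\binom m4$, and apply Lemma~\ref{lem:two-rates}; your explicit constant $C_w=(a_3(w)+6B(a(w)))\lambda^{-|w|-1}$ simply spells out the constant the paper leaves implicit.
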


\begin{proof}
The lower bound is already proved.  The candidate vector $a$ just before
the last one of $q$ is obtained from $a(w)$ after $M-|w|-1$ updates.
Since $(m-2)\le2\binom m4$ for $m\ge4$, its jump is bounded by
$a_3+2B(a)$.  Lemma~\ref{lem:two-rates} gives the upper bound, with a
constant depending only on $w$.
\end{proof}

\begin{remark}
The jumps are downward: $D(q^-) > D(q)$.  There is no conflict with the
intermediate-value property of ordinary derivatives, since $D$ is the
\emph{right} derivative of $\Phi$ on the whole interval, and $\Phi$ need
not be differentiable at the dyadic coordinates.  At the exceptional
dyadics the left limit is infinite, so the global graph is unbounded;
this is why ordinary box dimensions are stated only for bounded pieces.
\end{remark}

\subsection{Nowhere differentiability}

\begin{lemma}\label{lem:bit-flip}
Suppose the $N$th digit of the canonical binary expansion of $y\in(0,1/2)$
is zero, and $y+2^{-N}<1/2$.  Then
\begin{equation}\label{eq:bit-flip}
 D(y+2^{-N})-D(y)\ge\lambda^N.
\end{equation}
\end{lemma}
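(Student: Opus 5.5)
Write the canonical expansion of $y$ as $p\,0\,\omega$, where $p$ is the word of the first $N-1$ digits and $\omega$ is the canonical tail. Then $y+2^{-N}$ has canonical expansion $p\,1\,\omega$: flipping a zero digit to one creates no carry. The hypothesis $y+2^{-N}<1/2$ keeps this point in the domain; since $b_1=0$, it forces $N\ge2$, so $p$ starts with zero. Put $a=a(p)$ and $d_0=d(p)$, with $p$ empty treated through the initial vector if needed. By \eqref{eq:word-update} and \eqref{eq:cylinder-representation},
\[
 D(y)=d_0+F_{T_0a}(\omega),\qquad D(y+2^{-N})=d_0+\sum_{m\ge3}a_m+F_{T_1a}(\omega).
\]
Only canonical expansions occur here, and \eqref{eq:cylinder-representation} applies to them. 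It remains valid even when a dyadic point is involved, because both expansions are eventually zero.

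Next compare the two tails. The map $a\mapsto F_a(\omega)$ is linear and monotone on nonnegative sequences, and each of $T_0,T_1$ is monotone. Since $T_0a\le T_1a$ coordinatewise, induction along $\omega$ gives $T_{\omega_j}\cdots T_0a\le T_{\omega_j}\cdots T_1a$ at every stage, hence $F_{T_1a}(\omega)\ge F_{T_0a}(\omega)$ in $[0,+\infty]$. Because $D(y)$ is finite by Proposition~\ref{prop:derivative-input}, we may subtract and obtain
\[
 D(y+2^{-N})-D(y)\ge\sum_{m\ge3}a_m(p)\ge a_3(p).
\]

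Finally, bound $a_3(p)$ from below. By \eqref{eq:next-vector}, $a_3(p)=2^{N}3^{-(t+1)}Z_3^\star(\mathbf k)$, where $\mathbf k$ is the index of $p$ and $t$ is its trailing-zero count. Keeping only the term with all variables equal to three gives $Z_3^\star(\mathbf k)\ge 3^{-\wt(\mathbf k)}$. The weight satisfies $\wt(\mathbf k)+t+1=|p|+1=N$, so $a_3(p)\ge 2^N3^{-N}=\lambda^N$, as in \eqref{eq:alpha-mass}. When $p$ has no ones, $a_3=(2/3)^{N}$ directly.

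The main point needing care is the monotonicity comparison $F_{T_1a}\ge F_{T_0a}$ when $F_{T_1a}(\omega)$ might be infinite. This is handled by working in $[0,+\infty]$ with the truncation definition of $F_a$. In any case, finiteness of $D(y+2^{-N})$ from Proposition~\ref{prop:derivative-input} shows that the right side is finite.
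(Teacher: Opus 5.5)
Your proof is correct and follows essentially the same route as the paper. You use the carry-free flip, the newly inserted mass $\sum_m a_m(p)\ge a_3(p)$, domination of all later contributions via $T_0\le T_1$ and positivity, and the bound $a_3(p)\ge\lambda^N$; the only cosmetic difference is that you get this bound from the all-threes term of $Z_3^\star$ (as in \eqref{eq:alpha-mass}), whereas the paper tracks a factor $\lambda$ through each update starting from $a_3(0)=\lambda^2$.
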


\begin{proof}
There is no carry in adding $2^{-N}$: only the $N$th digit changes.
The contribution newly inserted in \eqref{eq:word-update} is the total
mass of the candidate vector immediately before that digit.  Starting
from $a_m(0)=c_m^2$, each binary update multiplies its coefficient at
three by at least $\lambda$.  Thus this newly inserted mass is at least
$\lambda^N$.  Moreover, $T_1a\ge T_0a$, so every subsequent coefficient
for the modified expansion is at least the corresponding old
coefficient.  All subsequent contributions are nonnegative.  The finite
prefix inequalities therefore pass to the convergent derivative series
and prove \eqref{eq:bit-flip}.
\end{proof}

In index language, changing a zero to one splits an existing gap between successive partial
weights; if the changed digit lies after the last one of a terminating expansion, it appends a new
entry. The operator proof above covers both cases.

\begin{proof}[Proof of Theorem~\ref{thm:nowhere}]
Every canonical binary expansion in $(0,1/2)$ has infinitely many zeros,
including the terminating expansions.  Choose their positions $N\to+\infty$.
For all sufficiently large $N$, $y+2^{-N}<1/2$, and
Lemma~\ref{lem:bit-flip} gives
\[
 \frac{D(y+2^{-N})-D(y)}{2^{-N}}
 \ge(2\lambda)^N=(4/3)^N\rightarrow+\infty.
\]
This proves \eqref{eq:dini-main}.  A finite right derivative would force
all these quotients to have the same finite limit, which is impossible.
\end{proof}

\section{Dimensions of the graph}\label{sec:dimensions}

\subsection{The Bernoulli convolution}

Let $\mathbb P$ be the fair Bernoulli product measure on $\SigmaTwo$, and
let $\sigma$ be the left shift.  Define
\begin{equation}\label{eq:bernoulli-law}
 \nu=U_*\mathbb P.
\end{equation}
Thus $\nu$ is the self-similar probability measure for
\[
 u\mapsto\lambda u,
 \qquad u\mapsto1+\lambda u,
\]
with weights $1/2,1/2$.  This is an affine normalization of the usual fair
Bernoulli convolution with parameter $2/3$.

\begin{lemma}\label{lem:bernoulli-dimension}
Distinct level $n$ translations of this system have distance at least
$3^{-(n-1)}$.  The measure $\nu$ is exact dimensional of dimension one:
\begin{equation}\label{eq:nu-exact}
 \lim_{r\rightarrow\,0^+}
 \frac{\log\nu([u-r,u+r])}{\log r}=1
 \quad\text{for }\nu\text{-almost every }u.
\end{equation}
\end{lemma}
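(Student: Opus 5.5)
The plan has two parts: a separation estimate for the level-$n$ translations, and then a dimension computation using Hochman's theorem and exact dimensionality.

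\emph{Separation.} The level $n$ maps of the system are $u\mapsto\lambda^nu+\sum_{j=0}^{n-1}\omega_{j+1}\lambda^j$ with $\omega\in\{0,1\}^n$. Two distinct translations differ by
\[
\sum_{j=0}^{n-1}\epsilon_j(2/3)^j,\qquad \epsilon_j\in\{-1,0,1\},
\]
with not all $\epsilon_j$ equal to zero. Multiplying by $3^{n-1}$ turns this into the integer
\[
\sum_{j=0}^{n-1}\epsilon_j2^j3^{n-1-j}.
\]
Let $j_0$ be the smallest index with $\epsilon_{j_0}\neq0$. Every term with $j\ge j_0$ is divisible by $2^{j_0}$. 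The term with $j=j_0$ equals $\pm2^{j_0}3^{n-1-j_0}$, which is not divisible by $2^{j_0+1}$, while every term with $j>j_0$ is divisible by $2^{j_0+1}$. Hence the integer is nonzero. Its absolute value is therefore at least one, and the translations are at distance at least $3^{-(n-1)}$. This gives exponential separation with rate $\log 3$. Because all maps share the same contraction $\lambda$, this is exactly the separation of the translation parts, which is what Hochman's theorem requires.

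\emph{Dimension.} By Hochman's theorem \cite{Hochman2014}, in the homogeneous equicontractive setting exponential separation implies
\[
\dim\nu=\min\Bigl\{1,\frac{\log2}{-\log\lambda}\Bigr\}.
\]
Since $\log2/\log(3/2)>1$, this equals $1$. By Feng--Hu \cite{FengHu2009}, self-similar measures are exact dimensional, so the local dimension at $\nu$-almost every point equals the dimension of $\nu$. That is \eqref{eq:nu-exact}.

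The step I expect to need the most care is the bookkeeping between the notions of dimension. Hochman's theorem is stated for a dimension of $\nu$ (the lower Hausdorff dimension or the entropy dimension). Feng--Hu's exact dimensionality makes all of these coincide, so I would invoke Feng--Hu first and then read Hochman's conclusion as the value of the common local dimension. Hochman's condition also asks that the separation be exponential along an infinite sequence of $n$. Our bound $3^{-(n-1)}$ holds for every $n$, which is more than enough.
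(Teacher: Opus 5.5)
Your proposal is correct and follows the paper's proof essentially step for step. The shared steps are: clearing denominators by $3^{n-1}$ and using the $2$-adic valuation at the first nonzero coefficient to get separation $3^{-(n-1)}$, then Hochman's theorem with similarity dimension $\log 2/\log(3/2)>1$, then Feng--Hu exact dimensionality. Your remark on using Feng--Hu to fix which notion of dimension Hochman's conclusion refers to is a reasonable clarification that the paper leaves implicit.
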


\begin{proof}
A difference of two level $n$ translations is
\[
 \sum_{j=0}^{n-1}d_j(2/3)^j,
 \qquad d_j\in\{-1,0,1\},
\]
with some $d_j\ne0$.  Multiplication by $3^{n-1}$ gives an integer.
This integer is nonzero: if $j_0$ is the first nonzero position, divide
it by $2^{j_0}$; its first nonzero term is odd and all the remaining
terms are even.  Its absolute value is therefore at least one.  This
proves the stated exponential separation.

Hochman's dimension theorem (Theorem $1.1$ in \cite{Hochman2014}) says that a
self-similar measure on the line can have dimension smaller than the
minimum of one and its similarity dimension only if cylinder
translations concentrate super-exponentially.  The established separation
excludes this case.  Here the similarity dimension is
$\log2/\log(3/2)>1$, so $\dim\nu=1$.
Feng--Hu exact dimensionality for self-similar measures (Theorem $2.8$  in  \cite{FengHu2009})
then gives \eqref{eq:nu-exact}.
\end{proof}

The following useful consequence of exact dimensionality does not assert
any uniform estimate at all points of the support.  For each
$0<\eps<1$, there exist a Borel set $E_\eps$ of positive $\nu$-measure
and constants $C_\eps,r_\eps>0$ such that
\begin{equation}\label{eq:good-small-balls}
 \nu([u-r,u+r])\le C_\eps r^{1-\eps}
 \quad(u\in E_\eps,\ 0<r<r_\eps).
\end{equation}
For an explicit measurable uniformization, define, for $L\ge1$,
\[
 E_{\eps,L}=\bigcap_{k\ge L}
 \left\{u\in[0,3]:
 \nu([u-2^{-k},u+2^{-k}])\le2^{-k(1-\eps)}\right\}.
\]
The ball-mass functions are Borel, so these sets are Borel.  Exact
dimensionality gives $\nu(\bigcup_LE_{\eps,L})=1$; choose $L$ with
$\nu(E_{\eps,L})>0$.  For $0<r<2^{-L}$, choose $k\ge L$ with
$2^{-k-1}<r\le2^{-k}$.  Monotonicity of ball mass then gives
\[
 \nu([u-r,u+r])\le2^{-k(1-\eps)}
 \le2^{1-\eps}r^{1-\eps}\qquad(u\in E_{\eps,L}).
\]
Thus one may take $E_\eps=E_{\eps,L}$,
$C_\eps=2^{1-\eps}$ and $r_\eps=2^{-L}$.  Only this
positive-measure uniformization is used below.

\subsection{A graph measure lemma}

For a nonnegative vector $a$ with $0<a_3<+\infty$ and $B(a)<+\infty$, put
\begin{equation}\label{eq:model-graph}
 K_a=\{(X(\omega),F_a(\omega)):\omega\in\SigmaTwo\},
 \qquad
 \mu_a=(X,F_a)_*\mathbb P.
\end{equation}
Here $\Btwo(z,r)$ denotes the Euclidean ball of radius $r$ about $z$.
The set $K_a$ is compact by Proposition~\ref{prop:decomposition}.
The measure $\mu_a$ gives zero mass to the points over dyadic horizontal
coordinates, because $X_*\mathbb P$ is Lebesgue measure on $[0,1]$.

\begin{proposition}\label{prop:graph-transfer}
For every such $a$,
\begin{equation}\label{eq:lower-local-mu}
 \liminf_{r\rightarrow0^+}
 \frac{\log\mu_a(\Btwo(z,r))}{\log r}\ge s_*
 \quad\text{for }\mu_a\text{-almost every }z.
\end{equation}
Consequently, $\dimH K_a\ge s_*$.
\end{proposition}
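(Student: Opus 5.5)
The approach is to estimate $\mu_a(\Btwo(z,r))$ at $z=(X(\omega),F_a(\omega))$ for $\mathbb P$-typical $\omega$. We do this by conditioning on the first $N$ digits, where $2^{-N}\approx r^{1/h_*}$ is chosen so that the vertical scale $\lambda^N$ of a cylinder matches $r$. The lower bound $\dimH K_a\ge s_*$ then follows from the mass distribution principle (Billingsley's lemma) applied to \eqref{eq:lower-local-mu}, since $\mu_a(K_a)=1$.

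Fix $\eps>0$ and $r$ small. Let $n$ satisfy $2^{-n}\approx r$, and let $N$ satisfy $\lambda^N\approx r$; note $N<n$, since $\lambda>1/2$. A point $z'=(X(\eta),F_a(\eta))$ in the ball has $|X(\eta)-X(\omega)|\le r$. Hence $\eta$ lies in one of at most three dyadic cylinders of level $n$, namely $[\omega|_n]$ and its two neighbours; neighbours of a cylinder of $\omega$ at level $n$ share with $\omega$ a common prefix of some length $n'\le n$. Restrict first to the main cylinder $v=\omega|_n$. By \eqref{eq:conditional-decomposition} applied with prefix $v$,
\[
 F_a(v\eta')=d_a(v)+a_3(v)U(\eta')+R_{a(v)}(\eta'),
\]
with $a_3(v)\asymp\lambda^n$ and $0\le R\le 6B(a)2^{-n}$. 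This level is too fine, however, and the vertical dispersion should instead be taken at level $N$. So condition on $u=\omega|_N$ and split the remaining digits $N+1,\dots,n$ and beyond. The horizontal constraint fixes digits $N+1,\dots,n$ up to neighbours, which has probability about $2^{-(n-N)}$. The vertical constraint at level $N$ reads
\[
 |a_3(u)(U(\sigma^N\eta)-U(\sigma^N\omega))+R_{a(u)}(\sigma^N\eta)-R_{a(u)}(\sigma^N\omega)|\le r.
\]
Since $R_{a(u)}=O(2^{-N})$ is much smaller than $r\approx\lambda^N$, this forces $U(\sigma^N\eta)$ into an interval of length $O(r/\lambda^N+2^{-N}/\lambda^N)=O(1)$ around $U(\sigma^N\omega)$. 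That is useless directly, so the argument must go one scale deeper.

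The right bookkeeping is as follows. Write $U(\sigma^N\eta)=\sum_{j<n-N}\eta_{N+j+1}\lambda^j+\lambda^{n-N}U(\sigma^n\eta)$. The first part is determined by the horizontal constraint, up to the neighbour cases, so the vertical constraint becomes a constraint on $U(\sigma^n\eta)$: it must lie in an interval of length about $r/(a_3(u)\lambda^{n-N})\asymp r\lambda^{-n}$, modulo the remainder. The remainder needs care, because $R_{a(u)}$ is not determined at level $n$. I would instead apply \eqref{eq:conditional-decomposition} directly at prefix $v=\omega|_n$. There the unknown remainder $R_{a(v)}$ is $O(2^{-n})=O(r)$, which is harmless, and the coefficient is $a_3(v)\asymp\lambda^n$ by \eqref{eq:remainder-cylinder}. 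Hence
\[
 \mu_a(\Btwo(z,r)\cap[v])\le 2^{-n}\,\nu\bigl([U(\sigma^n\omega)-Cr\lambda^{-n},\,U(\sigma^n\omega)+Cr\lambda^{-n}]\bigr).
\]
With $r\approx 2^{-n}$ we get $r\lambda^{-n}\approx(3/4)^n$, so \eqref{eq:good-small-balls} bounds the $\nu$-factor by $(3/4)^{n(1-\eps)}$, provided $U(\sigma^n\omega)\in E_\eps$ and after a harmless enlargement of the radius by the constant $C$. The total is then about $2^{-n}(3/4)^{n(1-\eps)}=r^{2-h_*-O(\eps)}=r^{s_*-O(\eps)}$, as desired.

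The main obstacle is that the required property $\sigma^n\omega\in U^{-1}(E_\eps)$ must hold for all large $n$. This set has only positive $\mathbb P$-measure, not full measure, so it cannot hold for every $n$. I would handle it by working instead with the exact-dimensional statement \eqref{eq:nu-exact}, which is full measure. Fix a sparse sequence of scales, and use Borel--Cantelli on the events $\{\nu(B(U(\sigma^n\omega),(3/4)^n))>(3/4)^{n(1-\eps)}\}$. Their $\mathbb P$-probabilities equal $\nu$-measures of bad sets, since $\sigma^n$ preserves $\mathbb P$ and $U\circ\sigma^n$ has law $\nu$. These probabilities are summable if one uses a quantitative estimate: the standard bound $\nu\{u:\nu(B(u,\rho))>\rho^{1-\eps}\}\le C\rho^{\eps'}$, obtained from a Vitali covering argument together with $\dim\nu=1$ through the energy integral $\int\nu(B(u,\rho))^{-\dots}$. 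If that fails, I would fall back on the liminf only along a density-one set of $n$ via Birkhoff's theorem, interpolating between consecutive good $n$. The neighbour cylinders are handled identically: they share prefixes with $\omega$ down to the level where the carry stops, and the typical $\omega$ has bounded runs of ones, of length $O(\log n)$ eventually. The coefficient and $\nu$-bounds therefore transfer with subexponential losses. Letting $\eps\to0$ gives \eqref{eq:lower-local-mu}.
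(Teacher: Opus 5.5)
Your core estimate is the paper's. You condition on $v=\omega|_n$, use $a_3(v)\ge a_3\lambda^n$ and $0\le R_{a(v)}\le 6B(a)2^{-n}$ from \eqref{eq:remainder-cylinder} to confine $U(\sigma^n\eta)$ to an interval of length $O((3/4)^n)$ about $U(\sigma^n\omega)$, bound the mass by $2^{-n}$ times a $\nu$-ball, and invoke the uniform small-ball set $E_\eps$. However, two steps do not go through as written.

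First, your preferred route to ``good times'' rests on $\nu\{u:\nu(B(u,\rho))>\rho^{1-\eps}\}\le C\rho^{\eps'}$, and this does not follow from $\dim\nu=1$. Hochman's theorem and Feng--Hu exact dimensionality give almost-everywhere convergence of $\log\nu([u-\rho,u+\rho])/\log\rho$, but with no rate. The Markov/energy argument you allude to needs $I_t(\nu)=\iint|u-u'|^{-t}\,d\nu\,d\nu<\infty$ for $t$ close to $1$, i.e.\ correlation dimension one. That is a genuinely stronger property: for exponentially separated self-similar measures it comes from Shmerkin's $L^q$-spectrum theorem, not from a covering argument. Without a rate, summability of the bad events along scales with $n_{j+1}/n_j\to1$, which your interpolation needs, cannot be guaranteed. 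Your fallback is what the paper actually does. For a fixed $E_\eps$ of positive $\nu$-measure, Birkhoff's theorem gives the times $\{n:U(\sigma^n\omega)\in E_\eps\}$ density $\nu(E_\eps)>0$. Positive density (not density one) already yields $n_{j+1}/n_j\to1$, and monotonicity of ball mass interpolates between good radii.

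Second, the neighbour cylinders are not handled ``identically.'' For a neighbour $v'$ of $\omega|_n$, the vertical constraint confines the tail's $U$-value to an interval centred near $(F_a(\omega)-d_a(v'))/a_3(v')$, not at $U(\sigma^n\omega)$. So the bound coming from $E_\eps$ is unavailable at level $n$. If you instead pass to the common prefix, of length $n'=n-O(\log n)$ (runs of zeros matter too, for the left neighbour), you need $U(\sigma^{n'}\omega)\in E_\eps$ at an $\omega$-dependent time $n'$. That would be automatic if every large time were good, which is exactly what the unjustified Borel--Cantelli step was meant to supply. The ergodic fallback does not give it, and self-similarity does not transfer a small-ball bound at $U(\sigma^n\omega)$ to one at $U(\sigma^{n'}\omega)$, because of overlaps.

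The paper sidesteps neighbours altogether. It takes radii $r_n=2^{-(1+\eps)n}$ and notes that $\mathbb P\{\mathrm{dist}(X(\omega),2^{-n}\Z)\le r_n\}\le 4\cdot 2^{-\eps n}$ is summable. By the first Borel--Cantelli lemma, eventually every coded point of $K_a$ in $\Btwo(z,r_n)$ has code beginning with $\omega|_n$, apart from null dyadic codes. The price is the exponent $\bigl(1+(1-\eps)\log_2(4/3)\bigr)/(1+\eps)$ in place of $1+(1-\eps)\log_2(4/3)$, and this loss disappears as $\eps\to0$. With this device replacing your neighbour analysis, and the ergodic-theorem fallback replacing Borel--Cantelli, your argument becomes the paper's proof. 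The mass distribution principle then finishes it, as you say.
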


\begin{proof}
 Fix $0<\eps<1$.  Choose $E_\eps$ as in
\eqref{eq:good-small-balls}.  Recall that $\sigma$ is the left shift operator
\[
\sigma: \{0,1\}^{\mathbb{N}}\rightarrow \{0,1\}^{\mathbb{N}},\]
\[
\sigma(w_1,w_2,\cdots, w_r,\cdots)=(w_2,w_3,\cdots, w_r,\cdots),\quad w_i \in\{0,1\}, i\in\mathbb{Z}^+.
\]
 For $\mathbb P$-almost every $\omega$, the
set of integers
\[
 \mathcal G(\omega)=
 \{n\ge1:U(\sigma^n\omega)\in E_\eps\}
\]
has asymptotic density $\nu(E_\eps)>0$, by the ergodic theorem for the
Bernoulli shift.  If its increasing enumeration is $n_1<n_2<\cdots$,
then
\begin{equation}\label{eq:good-time-ratio}
 \frac{n_{j+1}}{n_j}\rightarrow1.
\end{equation}
For example, the density assertion implies
$j/n_j\to\nu(E_\eps)$, which proves \eqref{eq:good-time-ratio}.

Put
\begin{equation}\label{eq:radii}
 r_n=2^{-(1+\varepsilon)n}.
\end{equation}
The level-$n$ dyadic boundaries in $[0,1]$ are the $2^n+1$ points
$k2^{-n}$, $0\le k\le2^n$. Since $X_*\mathbb{P}$ is Lebesgue measure, the union bound
gives
\begin{equation}
 \mathbb{P}\{\mathrm{dist}(X(\omega),2^{-n}\mathbb{Z})\le r_n\} \le2(2^n+1)r_n \le4\,2^{-\varepsilon n}.
\end{equation}
The last expression is summable in $n$. The first Borel--Cantelli lemma
therefore implies that, for $\mathbb{P}$-almost every $\omega$, eventually
\begin{equation}\label{eq:boundary}
 \mathrm{dist}(X(\omega),2^{-n}\mathbb{Z})>r_n.
\end{equation}
No independence of these events is required.
Fix a sequence satisfying both this property and
\eqref{eq:good-time-ratio}, and write
$z=(X(\omega),F_a(\omega))$.

 If a coded point lies in
$\Btwo(z,r_n)$, its first $n$ digits agree with
those of $\omega$, apart from dyadic codes of measure zero.  Denote the
common prefix by $v$ and the remaining tail of that point by $\eta$.
By \eqref{eq:conditional-decomposition}, its vertical difference from
$z$ is
\[
 a_3(v)\bigl(U(\eta)-U(\sigma^n\omega)\bigr)
 +R_{a(v)}(\eta)-R_{a(v)}(\sigma^n\omega).
\]
Thus 
\[
\big{|}a_3(v)\bigl(U(\eta)-U(\sigma^n\omega)\bigr)
 +R_{a(v)}(\eta)-R_{a(v)}(\sigma^n\omega) \Big{|} \leq r_n\leq \frac{1}{2^n}.
\]
Since $a_3(v)\ge a_3\lambda^n$ and
$0\le R_{a(v)}\le6B(a)2^{-n}$, membership in the ball implies
\begin{equation}\label{eq:tail-small-ball}
 |U(\eta)-U(\sigma^n\omega)|
 \le C_a(3/4)^n,
 \qquad C_a=\frac{1+6B(a)}{a_3}.
\end{equation}
Here $r_n\le2^{-n}$ was used.  Conditional on the prefix, the tail still
has law $\mathbb P$.  Therefore, for all sufficiently large
$n\in\mathcal G(\omega)$,
\begin{align}
 \mu_a(\Btwo(z,r_n))
 &\le 2^{-n}
 \nu\bigl([U(\sigma^n\omega)-C_a(3/4)^n,
            U(\sigma^n\omega)+C_a(3/4)^n]\bigr)\notag\\
 &\le C'_{a,\eps}
 2^{-n}(3/4)^{n(1-\eps)}.\label{eq:graph-ball-bound}
\end{align}
Notice that no independence between the remainder and $U$ has been
used; the deterministic remainder bound is enough.

Set
$t_\eps=1+(1-\eps)\log_2(4/3)$.  Estimate
\eqref{eq:graph-ball-bound} gives the lower local-dimension bound
$t_\eps/(1+\eps)$ along the good radii.  It gives the same bound at all
radii.  Indeed, if $r_{n_{j+1}}<r\le r_{n_j}$, monotonicity of ball mass
gives
\[
 \frac{-\log_2\mu_a(\Btwo(z,r))}{-\log_2r}
 \ge\frac{t_\eps n_j-O(1)}{(1+\eps)n_{j+1}},
\]
and \eqref{eq:good-time-ratio} applies.  Intersecting the full-measure
sets for $\eps=1/k$, $k=2,3,\ldots$, proves
\eqref{eq:lower-local-mu}, because
$1+\log_2(4/3)=s_*$.

Fix $0<s<s_*$. By \eqref{eq:lower-local-mu}, for $\mu_a$-almost every $z$ there is
$r_z>0$ such that
\begin{equation}\label{eq:pointwiseFrostman}
 \mu_a(B_{\mathbb{R}^2}(z,r))\le r^s\qquad(0<r<r_z).
\end{equation}
Indeed, the defining logarithmic quotient is eventually greater than $s$;
multiplying by $\log r<0$ reverses the inequality and gives
\eqref{eq:pointwiseFrostman}.

We also need a common upper bound on the allowed radii. For integers $k\ge1$,
let
\[
 A_k=\{z\in K_a:\mu_a(B_{\mathbb{R}^2}(z,r))\le r^s
                  \text{ for every }0<r<1/k\}.
\]
These are Borel sets: it is enough to impose the inequalities at rational
radii, and the ball-mass functions of the center are Borel. Their union has
full $\mu_a$-measure, so choose $k$ with $\mu_a(A_k)>0$ and put $A=A_k$.

Take any countable cover $A\subset\bigcup_i E_i$ with
$d_i:=\mathrm{diam} E_i<1/(2k)$. Whenever $E_i$ meets $A$ and $d_i>0$, choose
$z_i\in E_i\cap A$. Then
\[
 E_i\subset B_{\mathbb{R}^2} (z_i,2d_i),\qquad
 \mu_a(B_{\mathbb{R}^2}  (z_i,2d_i))\le(2d_i)^s.
\]
A set of diameter zero that meets $A$ is a singleton of zero $\mu_a$-mass,
by \eqref{eq:pointwiseFrostman}, so those sets can be ignored in the following
mass estimate. Countable subadditivity now gives
\[
 0<\mu_a(A)\le\sum_i\mu_a( B_{\mathbb{R}^2}(z_i,2d_i))
 \le2^s\sum_i d_i^s,
\]
where the sum over balls is taken over the positive-diameter sets that meet
$A$. Thus every such cover has total $s$-power of diameters at least
$2^{-s}\mu_a(A)>0$. Taking the infimum over covers and letting their maximal
diameter tend to zero gives
\[
 \mathcal{H}^s(A)>0,
 \qquad\text{hence}\qquad \mathrm{dim}_{H} K_a\ge\mathrm{dim}_{H} A\ge s.
\]
Finally let $s\uparrow s_*$. This proves $\mathrm{dim}_H K_a\ge s_*$ and completes the
proof.

\end{proof}

\subsection{Upper coverings and the full graph}

\begin{proposition}\label{prop:model-dimension}
If $0<a_3<+\infty$ and $B(a)<+\infty$, then
\begin{equation}\label{eq:model-dimensions}
 \dimH K_a=\ldimB K_a=\udimB K_a=s_*.
\end{equation}
\end{proposition}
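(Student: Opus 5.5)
Since $\dimH K_a\le\ldimB K_a\le\udimB K_a$ always holds, and Proposition~\ref{prop:graph-transfer} already gives $\dimH K_a\ge s_*$, it suffices to prove $\udimB K_a\le s_*$. I will do this by a direct covering count at dyadic scales $\delta=2^{-N}$, using the cylinderwise oscillation bound \eqref{eq:symbolic-oscillation} of Proposition~\ref{prop:decomposition}.

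Fix $N$ and cover $K_a$ by the $2^N$ cylinder pieces $K_a(v)=\{(X(v\omega),F_a(v\omega)):\omega\in\SigmaTwo\}$, where $|v|=N$. Horizontally, $K_a(v)$ lies in the dyadic interval $[x_v,x_v+2^{-N}]$ of length $2^{-N}$. Vertically, by \eqref{eq:conditional-decomposition} and \eqref{eq:remainder-cylinder}, the values $F_a(v\omega)=d_a(v)+a_3(v)U(\omega)+R_{a(v)}(\omega)$ lie in an interval of length at most
\[
 3a_3(v)+6B(a)2^{-N}\le 3\bigl(a_3+4B(a)\bigr)\lambda^N+6B(a)2^{-N}\le C\lambda^N,
\]
with $C$ depending only on $a$. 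This is exactly \eqref{eq:symbolic-oscillation} applied to sequences sharing the prefix $v$. Each piece therefore lies in a $2^{-N}\times C\lambda^N$ rectangle, which is covered by at most $C\lambda^N2^N+2$ closed squares of side $2^{-N}$.

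Summing over the $2^N$ words gives
\[
 N_{2^{-N}}(K_a)\le 2^N\bigl(C(2\lambda)^N+2\bigr)\le C'(4/3)^N2^N=C'2^{Ns_*},
\]
because $2\cdot\frac43=\frac83$ and $s_*=\log_2(8/3)$. For general $\delta$, choose $N$ with $2^{-N-1}<\delta\le2^{-N}$; replacing $\delta$ by a comparable dyadic scale changes $N_\delta$ only by a bounded factor. Hence $\udimB K_a\le s_*$, and \eqref{eq:model-dimensions} follows.

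The main obstacle is to make sure the vertical diameter bound is uniform over all prefixes $v$ even though the leading coefficient $a_3(v)$ depends on $v$. The upper estimate in \eqref{eq:remainder-cylinder}, which rests on the forced recursion of Lemma~\ref{lem:two-rates}, supplies exactly this uniformity.

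Two details need checking. First, $K_a$ must be bounded, which holds because $F_a\le3a_3+6B(a)$ by \eqref{eq:leading-remainder}. Second, the covering uses the symbolic graph, so both binary expansions of a dyadic coordinate are automatically included and no extra care is needed there.
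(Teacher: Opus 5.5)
Your proposal is correct and follows the paper's proof essentially verbatim. The lower bound comes from Proposition~\ref{prop:graph-transfer} together with $\dimH\le\ldimB\le\udimB$. The upper bound covers each of the $2^N$ length-$N$ cylinder pieces by at most a constant times $2^N\lambda^N+1$ squares of side $2^{-N}$, using the prefix-uniform oscillation bound \eqref{eq:symbolic-oscillation} (obtained via \eqref{eq:remainder-cylinder}), which gives $N_{2^{-N}}(K_a)\le C(8/3)^N$.
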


\begin{proof}
There are $2^N$ binary cylinders of length $N$.  On each one, the
horizontal range has length $2^{-N}$ and, by
\eqref{eq:symbolic-oscillation}, the vertical range has length at most
$C_a'\lambda^N$.  Consequently,
\begin{equation}\label{eq:graph-cover}
 N_{2^{-N}}(K_a)
 \le C_a''2^N\bigl(1+2^N\lambda^N\bigr)
 \le C_a'''(8/3)^N.
\end{equation}
Changing to arbitrary intermediate scales changes only a constant in
this covering bound.  Hence $\udimB K_a\le s_*$.
Proposition~\ref{prop:graph-transfer} and
$\dimH\le\ldimB\le\udimB$ prove equality throughout.
\end{proof}

\begin{proof}[Proof of Theorem~\ref{thm:dimensions}]
Let $w$ start with zero and contain at least five zeros.
Lemma~\ref{lem:zero-count} applies.  The affine map
\[
 (x,u)\longmapsto(x_w+2^{-|w|}x,d(w)+u)
\]
is bi-Lipschitz and maps $K_{a(w)}$ onto $K_w$.
Proposition~\ref{prop:model-dimension} therefore gives all three
Hausdorff and box dimensions of $K_w$ as $s_*$.  By
Corollary~\ref{cor:compact-graph}, $\Gamma_D(w)$ has the same closure
and differs from $K_w$ by at most countably many points.  Closure does
not change box dimensions.  Removing countably many points does not
change the positive Hausdorff dimension here.  This proves part (i).

Every non-dyadic coordinate belongs to an admissible cylinder with at
least five zeros: both digits occur infinitely often, so one can choose
a long enough prefix ending with one.  Thus $\Gamma_D$ is covered by
countably many such bounded graph pieces and by the countable graph
over the dyadic coordinates.  Their upper box bounds and
\eqref{eq:modified-box-def} give $\udimMB\Gamma_D\le s_*$.
On the other hand, any one regular cylinder gives
$\dimH\Gamma_D\ge s_*$.  Together with
$\dimH\le\dimP=\udimMB$, this proves \eqref{eq:global-main-dims}.

For $L\ge5$, apply the same argument to the word $0^L$.
Its next vector is $a_m=(2/m)^{L+1}$ and satisfies $B(a)<\infty$.
Its cylinder, with zero removed, is exactly $(0,2^{-L})$; hence the bounded-piece assertion in part (ii) follows.  Finally, for any
$L\ge1$, the interval $(0,2^{-L})$ contains a regular cylinder.  The
global upper bound and this cylinder's lower bound give
$\dimH\Gamma_{D,L}=s_*$.
\end{proof}

\section{Pointwise H\"older regularity and the multifractal spectrum}\label{sec:holder}

\subsection{Comparison with distance to dyadic grids}

\begin{proposition}\label{prop:pointwise-exponent}
For every non-dyadic $y\in(0,1/2)$,
\[
 h_D(y)=h_*/\tau_2(y).
\]
\end{proposition}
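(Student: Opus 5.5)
We prove the two inequalities $h_D(y)\le h_*/\tau_2(y)$ and $h_D(y)\ge h_*/\tau_2(y)$ separately. Throughout we work inside a fixed regular cylinder $I_w$ that contains $y$ in its interior, which exists because both digits occur infinitely often. Corollary~\ref{cor:jump-size} and Proposition~\ref{prop:decomposition} then apply with constants depending only on $w$. Write $\tau=\tau_2(y)$.

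\emph{Upper bound on $h_D$.} Choose $n$ along a sequence with $\delta_n(y)\le 2^{-n(\tau-\eps)}$, and let $q$ be the dyadic point at level at most $n$ nearest to $y$. Then $|y-q|=\delta_n(y)$, and $q$ has reduced level $M\le n$. By Corollary~\ref{cor:jump-size}, $D(q^-)-D(q)\ge\lambda^M\ge\lambda^n$. By right-continuity and the existence of the left limit, there are points $z_1<q\le z_2$ arbitrarily close to $q$ with $D(z_1)-D(z_2)\ge\lambda^n/2$. Hence one of $|D(z_i)-D(y)|$ is at least $\lambda^n/4$, while $|z_i-y|\le 2\delta_n(y)\le 2\cdot 2^{-n(\tau-\eps)}$. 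If $|D(z)-D(y)|\le C|z-y|^h$ held, this would give $2^{-nh_*}\lesssim 2^{-n(\tau-\eps)h}$ for infinitely many $n$. That forces $h\le h_*/(\tau-\eps)$. When $\tau=\infty$, the same argument with arbitrarily large exponents gives $h_D(y)=0$.

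\emph{Lower bound on $h_D$.} Take $z$ close to $y$, and let $N$ be the length of the common canonical prefix of $y$ and $z$. By \eqref{eq:symbolic-oscillation} applied inside $I_w$, $|D(z)-D(y)|\le C_w\lambda^N=C_w2^{-Nh_*}$. It remains to show $|z-y|\gtrsim 2^{-N\tau'}$ for every $\tau'>\tau$ and all large $N$. Since $y$ and $z$ separate at digit $N+1$, a dyadic point $p$ of level $N+1$ lies between them, so $|z-y|\ge|y-p|\ge\delta_{N+1}(y)$. By the definition of $\tau$, $\delta_{N+1}(y)\ge 2^{-(N+1)\tau'}$ for all large $N$. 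Combining the two estimates gives $|D(z)-D(y)|\le C|z-y|^{h_*/\tau'}$, so $h_D(y)\ge h_*/\tau'$. Letting $\tau'\downarrow\tau$ gives the claim.

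The main obstacle is the first step, specifically justifying that $q$ has reduced level at most $n$ and lies in the interior of $I_w$. The latter holds for large $n$ because $y$ is interior and $\delta_n(y)\to0$. The former holds because $q=\mathrm{round}(2^ny)2^{-n}$. We also need Corollary~\ref{cor:jump-size} to apply to $q$ whatever its reduced level $M$ is; since its lower bound $\lambda^M$ is uniform, this is fine. A further subtlety is the case where the common prefix in the second step lands in the boundary cases of dyadic $z$. There we use the canonical expansion of $z$ together with right-continuity, which keeps the oscillation bound valid.
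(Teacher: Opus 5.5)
Your proposal is correct and follows essentially the same route as the paper. For the lower bound you use the cylinder oscillation estimate $|D(z)-D(y)|\le C\lambda^N$ together with a separating dyadic point of level $N+1$; for the upper bound you test the H\"older inequality against the jump $D(q^-)-D(q)\ge\lambda^n$ at the nearest level-$n$ dyadic point. The only thing worth making explicit is that when $\tau_2(y)=+\infty$, the exponent $h=0$ must be shown admissible (local boundedness), and your oscillation bound already gives this.
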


\begin{proof}
Choose a regular prefix whose cylinder contains $y$ in its interior.
All sufficiently close points $z$ lie in this fixed cylinder.
If the first different binary digit of $y$ and $z$ has position $j$,
\eqref{eq:symbolic-oscillation}, with the initial prefix absorbed into
the constant, gives
\begin{equation}\label{eq:holder-symbolic-upper}
 |D(z)-D(y)|\le C_y\lambda^j.
\end{equation}
A dyadic rational point of level $j$ separates $y$ and $z$, so
\begin{equation}\label{eq:separating-grid}
 \delta_j(y)\le |z-y|.
\end{equation}

Recall that \[
 \|t\|=\min_{m\in\mathbb Z}|t-m|,
 \quad
 \delta_n(y)=2^{-n}\|2^ny\|,
 \quad
 \tau_2(y)=\limsup_{n\to\infty}
 \frac{-\log_2\delta_n(y)}n.
\]
Suppose first that $\tau=\tau_2(y)<+\infty$.  For each $\eps>0$, the
definition of the limsup gives
$\delta_j(y)\ge2^{-(\tau+\eps)j}$ for all sufficiently large $j$.
The first differing position tends to infinity as $z\to y$, because
$y$ is non-dyadic.  Thus \eqref{eq:holder-symbolic-upper} and
\eqref{eq:separating-grid} give
\[
 |D(z)-D(y)|
 \le C_y|z-y|^{h_*/(\tau+\eps)}.
\]
Letting $\eps\rightarrow0^+$ proves $h_D(y)\ge h_*/\tau$.
When $\tau=+\infty$, the lower bound $h_D(y)\ge0$ follows from local
boundedness, already supplied by the fixed regular cylinder.

For the reverse inequality, suppose an exponent $h>0$ satisfies the
estimate in \eqref{eq:holder-exponent-main}.  Let $q_n$ be a nearest
point of $2^{-n}\mathbb Z$ to $y$, so $|q_n-y|=\delta_n(y)$.
For all sufficiently large $n$, $q_n$ is an interior point of the same
regular cylinder. Its left limit is finite there by
Corollary~\ref{cor:jump-size}.  If its reduced binary level is
$M_n\le n$, then
Theorem~\ref{thm:continuity} gives
\[
 D(q_n^-)-D(q_n)\ge\lambda^{M_n}\ge\lambda^n.
\]
Apply the assumed H\"older estimate both at $q_n$ and at points tending
to $q_n$ from the left.  The actual left-limit assertion is important
here.  The triangle inequality yields
\begin{equation}\label{eq:holder-jump-test}
 \lambda^n\le D(q_n^-)-D(q_n)
 \le2C\delta_n(y)^h.
\end{equation}
Taking base-two logarithms gives explicitly
\[
 h\,\frac{-\log_2\delta_n(y)}n
 \le h_*+\frac{\log_2(2C)}n.
\]
The limsup in \eqref{eq:tau2-main} therefore yields
$h\tau_2(y)\le h_*$.  If $\tau_2(y)=+\infty$, this excludes every
positive $h$.  Otherwise, taking the supremum over all admissible $h$
gives the required upper bound.
\end{proof}

The proof shows why closeness to \emph{either} side of a dyadic boundary
matters.  The cylinder estimate gives the upper oscillation scale
$\lambda^j$, while the jump at the intervening boundary prevents a
better estimate at a point that approaches such boundaries too rapidly.
The argument determines the supremal H\"older exponent; it does not claim
that the defining estimate must hold at the critical exponent itself.

\subsection{The exact spectrum of the dyadic approximation exponent}

For $t\ge1$, let
\begin{equation}\label{eq:tau-level-set}
 E_t=\{y\in\mathcal C:\tau_2(y)=t\},
 \qquad
 E_\infty=\{y\in\mathcal C:\tau_2(y)=+\infty\}.
\end{equation}
These are Borel sets: each $\delta_n$ is continuous and strictly positive
on $\mathcal C$, and $\tau_2$ is the limsup of Borel functions.

\begin{lemma}\label{lem:dyadic-spectrum}
In every nonempty open interval $I\subset(0,1/2)$,
\begin{equation}\label{eq:tau-spectrum}
 \dimH(E_t\cap I)=\frac1t\quad(1\le t<\infty),
 \qquad
 \dimH(E_\infty\cap I)=0.
\end{equation}
The set $E_1$ has full Lebesgue measure.  The set $E_\infty$ is
uncountable in every such interval.
\end{lemma}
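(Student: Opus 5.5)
The plan is to reduce everything to classical dyadic Diophantine approximation: a Jarník--Besicovitch type theorem for base two, with the lower bound coming from the mass transference principle of Beresnevich--Velani. Write $W(t)$ for the set of $y\in(0,1/2)$ such that $\delta_n(y)\le 2^{-tn}$ for infinitely many $n$. Two elementary facts do most of the work. First, a level-$n$ dyadic point has a reduced form $m2^{-M}$ with $M\le n$, so $W(t)$ is governed by the reduced dyadics. Second, $\tau_2(y)=\inf\{t:y\notin W(t')\ \forall t'>t\}$, and more precisely
\[
\bigcap_{t'<t}W(t')\setminus\bigcup_{t'>t}W(t')\subset E_t\subset \bigcap_{t'<t}W(t').
\]
Moreover $W(t)\cap I$ and $E_t\cap I$ are invariant under local translation by dyadics, so all assertions localize to any interval $I$.

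\emph{Upper bound.} For $t>1$, cover $W(t')\cap I$ for fixed $t'<t$ by the intervals of radius $2^{-t'n}$ centred at the points $m2^{-n}\in I$, for $n\ge N$. The $s$-dimensional cost is
\[
\sum_{n\ge N}|I|\,2^{n}\,2^{-st'n},
\]
which tends to zero for $s>1/t'$. Letting $t'\uparrow t$ gives $\dimH(E_t\cap I)\le 1/t$. The same computation with $t'$ arbitrarily large gives $\dimH E_\infty=0$, and for $t=1$ the bound $1$ is trivial. Full Lebesgue measure of $E_1$ follows from Borel--Cantelli: $\sum_n 2^n2^{-t'n}<\infty$ for every $t'>1$, so $\tau_2(y)\le1$ almost everywhere. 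Since $\tau_2\ge1$ always, and the dyadics form a null set, $E_1$ has full measure, hence dimension $1$ in every $I$.

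\emph{Lower bound for $1<t<\infty$.} This is the main obstacle, because we need the \emph{exact} level set and not just $W(t)$. First apply the mass transference principle to the balls $B(m2^{-n},2^{-n})$, whose limsup set is everything, shrunk to radius $2^{-tn}$ (that is, $f(r)=r^{t}$ and $\HH^{1/t}$). This gives $\HH^{1/t}(W(t)\cap I)=\HH^{1/t}(I)=\infty$. To pass to $E_t$, I would rather use the finer gauge version of the principle with $f(r)=r^{t}\,n^{2}$-type corrections. The goal is to get $\HH^{1/t}(W(t)\cap I)=\infty$ while $\HH^{1/t}\bigl(W(t'')\bigr)=0$ for every $t''>t$ by the covering estimate above. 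Then $W(t)\setminus\bigcup_k W(t+1/k)\subset E_t$ still has positive $\HH^{1/t}$ measure. The covering sum at exponent $1/t$ for $W(t+1/k)$ is $\sum 2^{n(1-(t+1/k)/t)}<\infty$, so each $W(t+1/k)$ is $\HH^{1/t}$-null, and the countable union is null. This gives $\dimH(E_t\cap I)\ge 1/t$.

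\emph{The case $E_\infty$.} It remains to show $E_\infty\cap I$ is uncountable, since its dimension is $0$ by the upper bound. A direct Cantor construction works. Choose a non-dyadic prefix so that its cylinder lies in $I$. Then build binary expansions by alternating arbitrary choice bits with runs of zeros (or ones) of length $n_k$ growing superexponentially, for example $n_k\ge k\,(\text{current length})$, followed by a $1$ (respectively $0$) so the point stays non-dyadic. Each point has $\delta_n(y)\le 2^{-n-n_k}$ at the run start $n$, so $-\log_2\delta_n/n\to\infty$. Distinct choice sequences give distinct points, which yields continuum many. The same construction also shows each $E_t$ is nonempty in $I$, consistent with the positive dimension.
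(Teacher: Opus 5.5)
Your proposal is correct and follows essentially the paper's route: a covering bound for the limsup sets, the mass transference principle giving $\mathcal{H}^{1/t}(W(t)\cap I)=+\infty$, removal of the $\mathcal{H}^{1/t}$-null union $\bigcup_k W(t+1/k)$ (which also removes the dyadics), Borel--Cantelli for $E_1$, and a sparse-digit Cantor construction for $E_\infty$. The gauge-corrected version of the principle you mention is not needed, since the plain version already gives everything your argument uses. When you apply the principle, keep only finitely many centres per level (the paper takes $|m|\le n2^n$) so that the enumerated radii tend to zero, as its hypotheses require.
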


\begin{proof}

{\bf Step 1.} For $t>1$, define the limsup set
\begin{equation}\label{eq:Wt}
 W_t=\{y\in(0,1/2):\delta_n(y)<2^{-tn}
                  \text{ for infinitely many }n\}.
\end{equation}
At level $n$, it is covered by $O(2^n)$ intervals of radius $2^{-tn}$.
For every $s>1/t$,
\[
 \sum_{n\ge1}2^n(2\cdot2^{-tn})^s<+\infty.
\]
So we have $\dimH W_t\le1/t$.
Since $\{y\in\mathcal C:\tau_2(y)\ge t\}$ is contained in every
$W_u$ with $1<u<t$, we also have
\begin{equation}\label{eq:tau-superlevel-upper}
 \dimH\{y\in\mathcal C:\tau_2(y)\ge t\}\le1/t
 \quad(t>1).
\end{equation}
{\bf Step 2.}
Fix $1<t<+\infty$ and set $s=1/t\in(0,1)$. We will prove the stronger
statement
\begin{equation}\label{eq:criticalW}
 \HH^s(W_t\cap I)=+\infty.
\end{equation}
The external result used for this step is the following specialization of Theorem $2$ in \cite{BeresnevichVelani2006}.

\begin{mtp}
Let $0<s<1$, and let $B_i=[x_i-r_i,x_i+r_i]$ be a sequence of balls in
$\R$ with $r_i>0$ and $r_i\to0$. Set
\[
 B_i^{(s)}=[x_i-r_i^s,x_i+r_i^s].
\]
Suppose that, for every nondegenerate bounded interval $J$,
\[
 \LL\!\left(J\cap\limsup_{i\to+\infty}B_i^{(s)}\right)=\LL(J).
\]
Then, for every such $J$,
\[
 \HH^s\!\left(J\cap\limsup_{i\to+\infty}B_i\right)
 =\HH^s(J)=+\infty.
\]
\end{mtp}
This is the one-dimensional case with dimension function $f(r)=r^s$.
The function $f$ is increasing, continuous, and tends to zero at zero.
Moreover, $f(r)/r=r^{s-1}$ is monotone, as required by the general theorem.

Here is an explicit sequence of balls to which it applies. At level
$n\geq1$, use all centers $m2^{-n}$ with $m\in\Z$ and $|m|\leq n2^n$,
and define
\begin{equation}\label{eq:balls}
 B_{n,m}=[m2^{-n}-r_n,m2^{-n}+r_n],
 \qquad r_n=2^{-tn-1}.
\end{equation}
The centers at that level are exactly the level $n$ dyadic rational points
in $[-n,n]$. Enumerate first all balls at level $1$, then all at level
$2$, and so on, in any order within each level. Every level is finite,
so the radii of the resulting sequence tend to zero. Repeated centers
at different levels cause no difficulty.

The enlarged radius is
\begin{equation}\label{eq:enlarged}
 r_n^s=(2^{-tn-1})^{1/t}=2^{-n-1/t}>2^{-n-1}.
\end{equation}
For any fixed $x\in\R$, choose $n>|x|+1$. A nearest level $n$ dyadic
point $q_n$ satisfies
\[
 |x-q_n|\leq2^{-n-1}<r_n^s,
 \qquad |q_n|\leq|x|+2^{-n-1}<n.
\]
Thus $q_n$ is among the centers used at level $n$, and $x$ belongs to an
enlarged ball at that level. This holds at every sufficiently large
level, so
\[
 \limsup_{i\to\infty}B_i^{(s)}=\R.
\]
In particular, the full-Lebesgue-measure hypothesis of the mass
transference principle holds in every interval, not only in $I$.

Let $$L_t=\limsup_{i\to+\infty}B_i$$ for the enumerated original balls.
Membership in infinitely many balls is equivalent to membership at
infinitely many distinct levels, since each level is finite. If
$y\in L_t\cap I$, then at each of those levels
\[
 \delta_n(y)\leq r_n=\tfrac12\,2^{-tn}<2^{-tn}.
\]
Consequently,
\begin{equation}\label{eq:LtW}
 L_t\cap I\subseteq W_t\cap I.
\end{equation}
The factor $1/2$ in~\eqref{eq:balls} ensures the strict inequality used
in $W_t$, while~\eqref{eq:enlarged} ensures that the enlarged balls still
cover the dyadic gaps.

Choose a nondegenerate closed interval $J\subset I$. All hypotheses of
the mass transference principle have now been checked, so
\[
 \HH^s(L_t\cap J)=\HH^s(J)=+\infty.
\]
  
For clarity, the last equality is an elementary fact about intervals:
if $J$ has length $\ell>0$ and is covered by sets of diameters
$d_i\leq\rho$, then $\sum_i d_i\geq\ell$, and hence
\[
 \sum_i d_i^s\geq\rho^{s-1}\sum_i d_i
 \geq\ell\rho^{s-1}\rightarrow\infty
 \quad\text{as }\rho\rightarrow0^+.
\]
Here $s<1$. Combining this conclusion with~\eqref{eq:LtW} proves
\eqref{eq:criticalW}.\\
{\bf Step 3.}
For $v>1$, denote by
\[
 F_v=\{y\in\CC:\tau_2(y)\geq v\}.
\]
For $1<t<+\infty$ and $s=1/t$, define
\[
 G_t=\{y\in\CC:\tau_2(y)>t\}.
\]
There is the exact countable-union identity
\begin{equation}\label{eq:Gunion}
 G_t=\bigcup_{j=1}^{\infty}F_{t+1/j}.
\end{equation}
Indeed, if $\tau_2(y)>t$, one can choose $j$ sufficiently large that
$t+1/j\leq\tau_2(y)$, including when $\tau_2(y)=+\infty$; the reverse
inclusion is immediate.

By \eqref{eq:tau-superlevel-upper}, for every $j\geq1$,
\[
 \dimH F_{t+1/j}\leq\frac1{t+1/j}<\frac1t=s.
\]
A set of Hausdorff dimension strictly below $s$ has zero
$s$-dimensional Hausdorff measure. Countable subadditivity and
\eqref{eq:Gunion} therefore give
\begin{equation}\label{eq:Gnull}
 \HH^s(G_t)=0.
\end{equation}
Also, $\mathbb{D}$ is countable, so $\HH^s(\mathbb{D})=0$ because $s>0$.

Every non-dyadic point of $W_t$ has exponent at least $t$ by
\eqref{eq:Wt}. It follows that
\[
 S_t:=(W_t\cap I)\setminus(G_t\cup\mathbb{D})\subseteq E_t\cap I.
\]
Removing a set of zero $\HH^s$-measure from a set of infinite
$\HH^s$-measure leaves infinite measure. Explicitly, subadditivity gives
\[
 \HH^s(W_t\cap I)
 \leq\HH^s(S_t)+\HH^s(G_t)+\HH^s(\mathbb{D})=\HH^s(S_t).
\]
Together with~\eqref{eq:criticalW}, this proves
\begin{equation}\label{eq:criticalE}
 \HH^{1/t}(E_t\cap I)=+\infty,\qquad(1<t<+\infty).
\end{equation}
Thus $\dimH(E_t\cap I)\geq1/t$. Combining this with
\eqref{eq:tau-superlevel-upper} yields the required equality for every finite
$t>1$.
 Although each
$F_{t+1/j}$ has dimension strictly less than $1/t$, the dimension of their
union can equal $1/t$. The removal is justified by their union having
\emph{zero $\HH^{1/t}$-measure}, not by claiming that the union has
strictly smaller dimension.

For $t=1$, the same level covers have total Lebesgue length
$O(2^{-(u-1)n})$ when $u>1$.  The Borel--Cantelli lemma gives
$\tau_2(y)\le1$ almost everywhere by taking a countable sequence
$u\rightarrow1^+$.  Since always $\delta_n(y)\le2^{-n-1}$ at a
non-dyadic point, $\tau_2(y)\ge1$.  Hence $E_1$ has full measure in
every $I$ and dimension one. \\
{\bf Step 4.}
The dimension of $E_\infty$ is at most $1/t$ for every $t>1$ by
\eqref{eq:tau-superlevel-upper}, and is therefore zero.
To see that it is uncountable in $I$, fix a binary cylinder with closure
inside $I$, and choose positions $N_j$ beyond its prefix such that
$N_{j+1}/N_j\to\infty$.  After the prefix, allow a digit equal to one
only at the positions $N_j$, with infinitely many of these digits chosen
to be one.  There are uncountably many such non-dyadic expansions, and
at level $N_j$ their tails satisfy
\[
 \delta_{N_j}(y)\le\sum_{i>j}2^{-N_i}
 \le2^{1-N_{j+1}}.
\]
Thus $\tau_2(y)=+\infty$ for all these points.  This also proves
nonemptiness, and completes the argument.
\end{proof}

\begin{proof}[Proof of Theorem~\ref{thm:holder-spectrum-main}]
Proposition~\ref{prop:pointwise-exponent} proves
\eqref{eq:pointwise-holder-main}.  If $0<h\le h_*$, its level set is
$E_{h_*/h}$, whose dimension is $h/h_*$ by
Lemma~\ref{lem:dyadic-spectrum}.  The level set for $h=0$ is
$E_\infty$, and no exponent exceeds $h_*$.  This proves
\eqref{eq:holder-spectrum-main}.  The full-measure, density, and
uncountability assertions follow from the intervalwise assertions in
the same lemma.
\end{proof}

\section{Explicit evaluations and concluding remarks}\label{sec:examples}

\subsection{Finite-index coordinates}

Formula~\eqref{eq:finite-derivative} makes the values at finite index
coordinates accessible to the usual MZV identities.  For example, at an
index $k\ge2$,
\begin{equation}\label{eq:depth-one-D}
 \Phi(2^{-k})=\zeta(k),
 \qquad D(2^{-k})=2^k(\zeta(k)-1).
\end{equation}
If $k\ge3$, the jump series can be summed in terms of ordinary zeta
values:
\begin{equation}\label{eq:depth-one-jump}
 \begin{split}
 D((2^{-k})^-)-D(2^{-k})
 &=2^k\bigl(\zeta(k-1)-2\zeta(k)+1\bigr),\\
 D((2^{-k})^-)&=2^k\bigl(\zeta(k-1)-\zeta(k)\bigr).
 \end{split}
\end{equation}
Indeed, expand $(m-2)m^{-k}=m^{-(k-1)}-2m^{-k}$ in
\eqref{eq:jump-main}; the $m=2$ term is zero.  At $k=2$ the first
series diverges, so
\[
 D(1/4)=4(\zeta(2)-1),\qquad D((1/4)^-)=+\infty.
\]

For an admissible index $(a,b)$, the coarsening identity gives
\begin{equation}\label{eq:depth-two-D}
 D\left(2^{-a}+2^{-(a+b)}\right)
 =2^{a+b}\bigl(\zeta^\star(a,b)-\zeta(a)\bigr).
\end{equation}
Thus the finite derivative values are rational linear combinations of
ordinary MZVs, with the normalization $2^{\wt(\mathbf k)}$.  The weights need not be homogeneous because deleting the
last entry changes the weight.

The exceptional dyadics also admit a uniform evaluation using the
classical MZV sum formula \cite{Granville1997}:
\begin{equation}\label{eq:classical-sum-formula}
 \sum_{\substack{k_1+\cdots+k_r=N\\k_1\ge2,\ k_2,\ldots,k_r\ge1}}
 \zeta(k_1,\ldots,k_r)=\zeta(N),\qquad 1\le r<N.
\end{equation}
The coarsenings of $(2,\{1\}^{r-1})$ run through all admissible indices
of weight $r+1$.  Summing \eqref{eq:classical-sum-formula} over their
depths and using \eqref{eq:coarsening} gives
\[
 \zeta^\star(2,\{1\}^{r-1})=r\zeta(r+1),\qquad r\ge1.
\]
Consequently, for every integer $M\ge3$,
\begin{equation}\label{eq:exceptional-dyadic-value}
 D(1/2-2^{-M})
 =2^M\bigl[(M-1)\zeta(M)-(M-2)\zeta(M-1)\bigr].
\end{equation}

\subsection{Infinite cyclic indices}

\begin{proposition}\label{prop:periodic-values}
For an integer $k\ge2$, put $y_k=(2^k-1)^{-1}$.  Then
\begin{equation}\label{eq:periodic-D}
 \begin{split}
 \Phi(y_k)&=\prod_{m=2}^{\infty}(1-m^{-k})^{-1},\\
 D(y_k)&=\prod_{m=3}^{\infty}\left(1-(2/m)^k\right)^{-1}.
 \end{split}
\end{equation}
In particular,
\begin{equation}\label{eq:one-third}
 \Phi(1/3)=2,\qquad D(1/3)=6, \qquad h_D(1/3)=h_*.
 \end{equation}
\end{proposition}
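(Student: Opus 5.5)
The point $y_k=(2^k-1)^{-1}=\sum_{j\ge1}2^{-jk}$ is $\beta(\mathbf k)$ for the infinite cyclic index $\mathbf k=(k,k,k,\ldots)$, since the partial weights are $K_r=rk$. It is non-dyadic, and $k\ge2$ makes the index admissible and different from $(2,\{1\}^\infty)$. So $\Phi(y_k)=\lim_r\zeta^\star(\{k\}^r)$, and by Proposition~\ref{prop:derivative-input} also $D(y_k)=\lim_r S(\{k\}^r)$. The plan is to evaluate both limits with the repeated-entry version of the complete-homogeneous generating function \eqref{eq:all-one-generating}.

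For $\Phi$, I would observe that $\zeta^\star(\{k\}^r)$ is the sum over multisets of size $r$ of integers $n\ge1$ of $\prod n^{-k}$, with multiplicities counted. Letting $r\to\infty$, the multiplicity of $n=1$ absorbs the unused positions, exactly as in the derivation of \eqref{eq:infinite-ones-identity}. Monotone convergence then gives the sum over all finitely supported multiplicity vectors $(c_n)_{n\ge2}$ of $\prod_{n\ge2}n^{-kc_n}$, which is $\prod_{m\ge2}(1-m^{-k})^{-1}$. The product converges since $k\ge2$.

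For $D$, the argument is identical applied to $S(\{k\}^r)=2^{rk}Z_2^\star(\{k\}^r)=\sum_{n_1\ge\cdots\ge n_r\ge2}\prod(2/n_i)^k$. Now the value $n=2$ has weight $(2/2)^k=1$ and plays the role of the filler. The limit therefore equals the sum over multiplicities of $n\ge3$, namely $\prod_{m\ge3}(1-(2/m)^k)^{-1}$. I would justify the interchange of limit and sum by monotonicity in $r$ and positivity, noting that the product converges because $\sum(2/m)^k<\infty$ for $k\ge2$. As an alternative route, one could use Lemma~\ref{lem:prefix-expansion} together with the boundary recursion, but the multiplicity argument is more direct.

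For $k=2$, $\Phi(1/3)=\prod_{m\ge2}m^2/((m-1)(m+1))$ telescopes to $2$. Likewise $D(1/3)=\prod_{m\ge3}m^2/((m-2)(m+2))$: the partial product up to $N$ is $\frac{N!^2/4}{(N-2)!\,(N+2)!/24}$, which tends to $6$. Finally, $h_D(1/3)=h_*/\tau_2(1/3)$ by Proposition~\ref{prop:pointwise-exponent}. Since $1/3$ has binary expansion $0101\ldots$, we get $\|2^n/3\|=1/3$ for all $n$, so $\delta_n=2^{-n}/3$ and $\tau_2=1$. The main obstacle is only bookkeeping in the limit interchange; the rest is routine.
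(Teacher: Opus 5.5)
Your proposal is correct and follows the paper's proof essentially step by step. Both use the multiplicity (complete-homogeneous) argument with ones, respectively twos, filling the unused positions; monotone convergence in $r$; Proposition~\ref{prop:derivative-input} to identify $D(y_k)$ with $\lim_r S(\{k\}^r)$; the telescoping partial products $2N/(N+1)$ and $6N(N-1)/((N+1)(N+2))$ at $k=2$; and $\tau_2(1/3)=1$ in the H\"older formula, where your exact value $\delta_n(1/3)=2^{-n}/3$ is a slightly sharper form of the paper's two-sided comparability remark.
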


\begin{proof}
The binary expansion of $y_k$ is $(0^{k-1}1)^\infty$, so its infinite
index is $\{k\}^\infty$.  In a finite star sum with $r$ entries equal
to $k$, specify the multiplicity of each integer at least two, and let
ones fill the remaining positions.  Letting $r\to+\infty$ gives the
first product in \eqref{eq:periodic-D} by monotone convergence.

For the normalized tail $S(\{k\}^r)$, the analogous argument uses the
weights $(2/m)^k$ for $m\ge3$, and lets twos fill the remaining
positions.  Proposition~\ref{prop:derivative-input} gives the second
product.  Both products converge because the sums of their nontrivial
weights converge and each individual weight is less than one.
For $k=2$, the finite products are
\[
 \prod_{m=2}^N\frac{m^2}{m^2-1}=\frac{2N}{N+1},
 \qquad
 \prod_{m=3}^N\frac{m^2}{m^2-4}
 =\frac{6N(N-1)}{(N+1)(N+2)}.
\]
Their limits give the first two values in \eqref{eq:one-third}.
The
distance of the periodic binary expansion to the level $n$ dyadic rationals is bounded above and below by
positive constants times $2^{-n}$.  Hence $\tau_2(1/3)=1$, and the
H\"older formula gives the last value.
\end{proof}

\subsection{Scope of the method}

The four main results arise from three finite-index facts: the normalized
star-tail formula for $D$, the boundary recursion
\eqref{eq:alpha-recursion}, and the repeated-one harmonic-star generating
function \eqref{eq:all-one-generating}.  The passage to binary words does
not replace the MZV structure. It records the weight and the commas
of an index.  The coefficient at the smallest nonconstant summation
value, namely three after the contribution of twos has been removed,
accounts for the exponent $h_*$ and the graph dimension $2-h_*$.

For deformed or restricted-index correspondences, the same scheme would
require a replacement for the positive boundary recursion, a strict
separation between the first and remaining contraction rates, and a
dimension estimate for the resulting leading symbolic measure.  These
conditions are not verified here for the rational or complex variations
of the correspondence.  Likewise, the results above do not determine
Assouad dimensions, dimensions of fibers, or the absolute continuity of
the vertical distribution of $D$.  Such questions concern additional
structure beyond the graph and pointwise-regularity statements proved
in this paper.

\section*{Acknowledgements}
The author wants to thank Yufeng Wu for very helpful conversations about fractal geometry.
This project is  supported  by the National Natural Science Foundation of China (Grant No. 12571009) and the Natural Science Foundation of Hunan Province, China (Grant No. 2026JJ40003).    AI tools were used during the preparation of the paper as technical assistants.   All mathematical statements and proofs  were verified by the author.  The author takes full responsibility for the content of
the paper.

\end{document}